\newcommand{\Z}{{\mathbb Z}}
\newtheorem{thm}{Theorem}[section]
\newtheorem{cor}[thm]{Corollary}
\newtheorem{lemma}[thm]{Lemma}
\newtheorem{definition}[thm]{Definition}
\newtheorem{example}[thm]{Example}
\newtheorem{remark}[thm]{Remark}
\newtheorem{proposition}[thm]{Proposition}
\theoremstyle{definition}
\begin{document}
\title[Tensor products and Intermediate Factor theorem]{Splitting of Tensor Products and Intermediate Factor theorem: Continuous Version}
\author[Amrutam]{Tattwamasi Amrutam}
\address{Ben Gurion University of the Negev.
Department of Mathematics.
Be'er Sheva, 8410501, Israel.}
\address{Institute of Mathematics of the Polish Academy of Sciences, Ul. S'niadeckich 8, 00-656 Warszawa, Poland}
\email{tattwamasiamrutam@gmail.com}
\author[Jiang]{Yongle Jiang}
\address{School of Mathematical Sciences, Dalian University of Technology, Dalian, 116024, China}
\email{yonglejiang@dlut.edu.cn}
\subjclass[2020]{Primary 46M05, 46L06 ; Secondary 37B05, 46L45}
\keywords{Minimal tensor products, Intermediate subalgebras,
Intermediate Factor theorem, Splitting of tensor products}
\date{\today}
\begin{abstract}
Let $G$ be a discrete group. Given unital $G$-$C^*$-algebras $\mathcal{A}$ and $\mathcal{B}$, we give an abstract condition under which every $G$-subalgebra $\mathcal{C}$ of the form $\mathcal{A}\subset \mathcal{C}\subset \mathcal{A}\otimes_{\text{min}}\mathcal{B}$ is a tensor product. This generalizes the well-known splitting results in the context of $C^*$-algebras by Zacharias and Zsido. As an application, we prove a topological version of the Intermediate Factor theorem. When a product group $G=\Gamma_1\times\Gamma_2$ acts (by a product action) on the product of corresponding $\Gamma_i$-boundaries $\partial\Gamma_i$, using the abstract condition, we show that every intermediate subalgebra $C(X)\subset\mathcal{C}\subset C(X)\otimes_{\text{min}}C(\partial\Gamma_1\times \partial\Gamma_2)$ is a tensor product (under some additional assumptions on $X$). This can be considered as a topological version of the Intermediate Factor theorem. We prove that our assumptions are necessary and cannot generally be relaxed. We also introduce the notion of a  uniformly rigid action for $C^*$-algebras and use it to give various classes of inclusions $\mathcal{A}\subset \mathcal{A}\otimes_{\text{min}}\mathcal{B}$ for which every invariant intermediate algebra is a tensor product.     \end{abstract}
\maketitle

\tableofcontents
\section{Introduction}
A basic question in the context of tensor product of $C^*$ (or, von Neumann)-algebras is to determine when a subalgebra $\mathcal{A}\subset \mathcal{C}\subset \mathcal{A}\otimes_{\text{min}}\mathcal{B}$ is a tensor product again. From the operator algebraic framework, this question has been studied by Ge-Kadison~\cite{ge1996tensor} (also see \cite{struatilua1999commutation}) in the context of von Neumann algebras and later, by \cite{zacharias2001splitting} and \cite{zsido2000criterion} for $C^*$-algebras. Seen from the lens of dynamics, if the ambient algebra is a tensor product of two abelian parts, then the question becomes equivalent to determining when every intermediate factor $Z$ with
\[X\times Y\to Z\to X\]
is again of the form $X\times \Tilde{Y}$ for a factor $\Tilde{Y}$ of $Y$. Of course, in the measurable context, we have to bring the measures into consideration.

Such results, known as Intermediate Factor theorems (IFTs), have been at the heart of numerous rigidity-type results (see, for example, \cite {bader2006factor} and the references therein). Motivated by these results, we prove an IFT for the continuous action of product groups in this paper.
\begin{thm}
\label{thm:introcontinuousfactortheorem}
Let $G=\Gamma_1\times \Gamma_2$ be a product of two discrete groups. Let $\partial\Gamma_i$ be a $\Gamma_i$-boundary for each $i=1,2$. Consider the product action $G\curvearrowright \partial\Gamma_1\times \partial\Gamma_2$. Let $X$ be a minimal $\Gamma_i$-space for each $i=1,2$. Moreover, assume that $\Gamma_i\curvearrowright X\times \partial\Gamma_i$ is minimal for each $i=1,2$. Then, every $G$-invariant intermediate $C^*$-subalgebra $\mathcal{C}$ with
\[C(X)\subset\mathcal{C}\subset C(X)\otimes_{\text{min}} C(\partial\Gamma_1\times \partial\Gamma_2)\]
is a tensor product of the form $ C(X)\otimes_{\text{min}} C(B)$, where $B$ is a $G$-factor of $\partial\Gamma_1\times \partial\Gamma_2$ which splits as a product of a $\Gamma_1$-factor of $\partial\Gamma_1$ and a $\Gamma_2$-factor of $\partial \Gamma_2$. In particular, 
every $G$-factor of $\partial\Gamma_1\times \partial\Gamma_2$ always splits as a product $Y_1\times Y_2$, where $Y_i$ is a $\Gamma_i$-factor of $\partial\Gamma_i$ (by taking $X$ to be the one point space).
\end{thm}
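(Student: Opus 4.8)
The plan is to split the proof into an abstract reduction and a boundary-theoretic core. First I would apply the abstract splitting criterion (established earlier in the paper) to the pair $\mathcal{A}=C(X)$ and $\mathcal{B}=C(\partial\Gamma_1\times\partial\Gamma_2)=C(\partial\Gamma_1)\otimes_{\min}C(\partial\Gamma_2)$, carrying the product $G$-action. The real work here is to verify the abstract hypothesis in this situation, and this is exactly where the standing assumptions enter: minimality of each $\Gamma_i\curvearrowright X\times\partial\Gamma_i$, minimality of $X$ as a $\Gamma_i$-space, and minimality together with strong proximality of the boundary actions $\Gamma_i\curvearrowright\partial\Gamma_i$. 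Granting this, the criterion produces $\mathcal{C}=C(X)\otimes_{\min}C(B)$ for a $G$-invariant subalgebra $C(B)\subseteq C(\partial\Gamma_1\times\partial\Gamma_2)$, i.e.\ $B$ is a $G$-factor of $\partial\Gamma_1\times\partial\Gamma_2$. This reduces everything to the final clause, that such a $B$ necessarily splits; indeed the $X=\mathrm{pt}$ case is precisely that clause, the abstract step then being vacuous.

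For the splitting I would first record that a product of boundaries is itself a $G$-boundary, so $B$ is a minimal $G$-space, and then manufacture the two factors intrinsically as fixed-point subalgebras: set $C(Y_1):=C(B)^{\Gamma_2}$ and $C(Y_2):=C(B)^{\Gamma_1}$. Because $\Gamma_2$ acts trivially on the first coordinate and minimally on $\partial\Gamma_2$, one has $C(\partial\Gamma_1\times\partial\Gamma_2)^{\Gamma_2}=C(\partial\Gamma_1)\otimes 1$, so $C(Y_1)\subseteq C(\partial\Gamma_1)$; as $\Gamma_1$ commutes with $\Gamma_2$ this algebra is $\Gamma_1$-invariant, hence $Y_1$ is a $\Gamma_1$-factor of $\partial\Gamma_1$, and symmetrically $Y_2$ is a $\Gamma_2$-factor of $\partial\Gamma_2$. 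Since both $C(Y_1)$ and $C(Y_2)$ lie inside the algebra $C(B)$, the inclusion $C(Y_1)\otimes_{\min}C(Y_2)\subseteq C(B)$ is automatic.

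The substance is the reverse inclusion $C(B)\subseteq C(Y_1)\otimes_{\min}C(Y_2)$, equivalently that the factor map $\pi:\partial\Gamma_1\times\partial\Gamma_2\to B$ decouples: $\pi(x_1,x_2)$ should depend on $x_1$ only through $q_1(x_1)\in Y_1$ and on $x_2$ only through $q_2(x_2)\in Y_2$. I would approach this through the slice maps $\iota_{x_2}:\partial\Gamma_1\to B$, $x_1\mapsto\pi(x_1,x_2)$, which are $\Gamma_1$-equivariant, so that their images are $\Gamma_1$-factors of the boundary $\partial\Gamma_1$; since $\Gamma_2$ commutes with $\Gamma_1$ and $\pi(x_1,sx_2)=s\,\pi(x_1,x_2)$ for $s\in\Gamma_2$, these slices transform equivariantly as $x_2$ varies. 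The goal is to show this family is ``constant'' — a trivial $\Gamma_1$-boundary bundle over $Y_2$ — so that $B\cong Y_1\times Y_2$. The decisive input is the contraction property coming from strong proximality: I would use contracting nets in $\Gamma_1$ (resp.\ $\Gamma_2$) to collapse the $\partial\Gamma_1$- (resp.\ $\partial\Gamma_2$-) coordinate toward a point and thereby force any $f\in C(B)$ to be recoverable from its $\Gamma_1$- and $\Gamma_2$-invariant parts.

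The step I expect to be the main obstacle is precisely this decoupling, and within it the promotion of the contraction from a pointwise (or measure-theoretic) statement to an honest identity inside the closed algebra $C(B)$: a contracting net $s_n$ in $\Gamma_2$ drives $s_n\cdot f$ to a $\Gamma_2$-invariant limit only pointwise, and only off an exceptional point, so one must use minimality and strong proximality of the boundary actions, together with the commutation of the two factors, to upgrade this to genuine norm membership in $C(B)$ and to rule out any residual entanglement between the two coordinates. Once the family of slices is shown to be rigid in this sense, $\pi$ factors through $q_1\times q_2$ injectively, whence $C(B)=C(Y_1)\otimes_{\min}C(Y_2)=C(Y_1\times Y_2)$, giving the asserted product splitting.
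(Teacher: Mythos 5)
Your overall architecture --- first reduce to $\mathcal{C}=C(X)\otimes_{\min}C(B)$ by one application of the abstract splitting criterion, then split $B$ separately --- does not match what the paper's criterion can actually deliver, and the two places where you defer the work are precisely where the argument breaks. The splitting criterion (Proposition~\ref{prop:intromainsplittingC*}) requires a subgroup $\Gamma\le G$ such that the base algebra is $\Gamma$-simple \emph{and} $\Gamma$ acts trivially on the other tensor factor. For $\mathcal{A}=C(X)$ and $\mathcal{B}=C(\partial\Gamma_1\times\partial\Gamma_2)$ no such subgroup exists in the nontrivial case: $\Gamma_1$ acts nontrivially on $\partial\Gamma_1$ and $\Gamma_2$ on $\partial\Gamma_2$, so no subgroup acting minimally on $X$ kills the whole product boundary. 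Your ``granting this'' therefore grants something false for a one-shot application. Note also that $X$ is only assumed minimal, not strongly proximal, so you cannot use the containment result (Proposition~\ref{prop:invariantinclusion}) with $X$ as the base to force $C(B)\subset\mathcal{C}$ either. The paper avoids this by \emph{interleaving} the two halves you tried to separate: it first adjoins $C(\partial\Gamma_1)$ to $\mathcal{C}$, applies the criterion with $\Gamma=\Gamma_1$ (which acts trivially on $\partial\Gamma_2$ and minimally on $X\times\partial\Gamma_1$) to split off a $\Gamma_2$-factor $B_2$ of $\partial\Gamma_2$, then uses strong proximality of $\Gamma_1\curvearrowright\partial\Gamma_1$ --- with $\partial\Gamma_1$, not $X$, as the base --- to push $C(B_2)$ down into $\mathcal{C}$, and finally applies the criterion once more with $\Gamma=\Gamma_2$ to the sandwich $C(X)\otimes C(B_2)\subset\mathcal{C}\subset\bigl(C(X)\otimes C(B_2)\bigr)\otimes C(\partial\Gamma_1)$. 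The tensor decomposition of $\mathcal{C}$ and the splitting of $B=B_1\times B_2$ come out simultaneously; neither is established before the other.

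The second half of your plan (splitting $B$ via fixed-point algebras $C(B)^{\Gamma_2}$ and $C(B)^{\Gamma_1}$ and contracting nets) also stops exactly at the point you yourself flag as the main obstacle: a contracting net gives only pointwise collapse off an exceptional point, and you give no mechanism to upgrade this to membership in the norm-closed algebra. The paper's mechanism is Proposition~\ref{prop:invariantinclusion}, which works dually: it takes a bounded functional vanishing on $\mathcal{C}$, uses strong proximality to push its restriction to the boundary factor to a point mass, and concludes by Hahn--Banach that the functional vanishes on the candidate subalgebra. If you want to salvage your outline, you should replace the one-shot reduction by the interleaved sandwich above and replace the pointwise contraction step by this duality argument.
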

It is worth pointing out that the assumption of minimality of the action $\Gamma_i\curvearrowright X\times \partial\Gamma_i$ is natural to this setup (see the discussion in Subsection~\ref{subsec:IFT}).
Moreover, we can only require just one action $\Gamma_i\curvearrowright \partial\Gamma_i$ to be a $\Gamma_i$-boundary besides satisfying all other assumptions, see Theorem \ref{thm:continuousfactortheorem}. Moreover, it is natural to ask as to why we do not consider the Furstenberg boundary $\partial_FG=\partial_F(\Gamma_1\times\Gamma_2)$ associated with the product group $G=\Gamma_1\times\Gamma_2$ in the above theorem. While this would be the natural analog to the measurable version obtained in \cite[Theorem~1.9]{bader2006factor}, it is simply not true that $\partial_F(\Gamma_1\times\Gamma_2)=\partial_F\Gamma_1\times\partial_F\Gamma_2$ (see Proposition~\ref{prop:notproductboundary}).

The intermediate factor theorem (IFT) proved in \cite{bader2006factor} has a dynamical flavor and considers measurable properties of the Poisson boundary. From the operator algebraic perspective, as mentioned above, an IFT is the setup where every intermediate subalgebra is a tensor product. Our point of view lies in putting these two perspectives together. As such, we generalize the result obtained in \cite{zacharias2001splitting} to take into the group actions. In particular, we establish the following, one of our main tools in proving Theorem~\ref{thm:introcontinuousfactortheorem}. 
\begin{proposition}
\label{prop:intromainsplittingC*}
Let $\mathcal{A}$ be a $G$-simple unital $C^*$-algebra. Assume that $\mathcal{A}$ has property (S). Let $\mathcal{B}$ be a $G$-$C^*$-algebra and consider the minimal tensor product $\mathcal{A}\otimes_{\text{min}} \mathcal{B}$. Let $\mathcal{C}$ be an intermediate $G$-$C^*$-algebra of the form $\mathcal{A}\otimes_{\text{min}} \mathbb{C}\subset \mathcal{C}\subset \mathcal{A }\otimes_{\text{min}} \mathcal{B}$. Assume that there exists a subgroup $\Gamma\le  G$ such that $\mathcal{A}$ is $\Gamma$-simple and $\Gamma\curvearrowright \mathcal{B}$ is trivial.  Then, $\mathcal{C}$ is of the form $\mathcal{A}\otimes_{\text{min}} \Tilde{B}$, where $\Tilde{\mathcal{B}}$ is a $G$-$C^*$-subalgebra of $\mathcal{B}$.   \end{proposition}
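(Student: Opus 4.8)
The plan is to locate the candidate tensor factor explicitly and then show the two inclusions, with the slice-map property (S) converting the problem into a single slicing condition and $\Gamma$-simplicity providing the dynamical input. Write $\alpha,\beta$ for the $G$-actions on $\mathcal{A},\mathcal{B}$, and set
\[
\tilde{\mathcal{B}}:=\{b\in\mathcal{B}:\ 1_{\mathcal{A}}\otimes b\in\mathcal{C}\}.
\]
First I would record the soft facts. Since $\mathcal{C}$ is a $C^*$-algebra, $\tilde{\mathcal{B}}$ is a norm-closed $*$-subalgebra of $\mathcal{B}$; and since every $g\in G$ fixes $1_{\mathcal{A}}$, the identity $g\cdot(1_{\mathcal{A}}\otimes b)=1_{\mathcal{A}}\otimes\beta_g(b)$ together with $G$-invariance of $\mathcal{C}$ shows that $\tilde{\mathcal{B}}$ is a $G$-$C^*$-subalgebra of $\mathcal{B}$. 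The inclusion $\mathcal{A}\otimes_{\text{min}}\tilde{\mathcal{B}}\subseteq\mathcal{C}$ is then immediate: $\mathcal{A}\otimes 1\subseteq\mathcal{C}$ by hypothesis, $1\otimes\tilde{\mathcal{B}}\subseteq\mathcal{C}$ by definition, and $\mathcal{C}$ is a closed algebra, hence contains the closed algebra they generate.

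The whole content lies in the reverse inclusion $\mathcal{C}\subseteq\mathcal{A}\otimes_{\text{min}}\tilde{\mathcal{B}}$. Here I would invoke property (S): the slice-map property identifies $\mathcal{A}\otimes_{\text{min}}\tilde{\mathcal{B}}$ with the Fubini product of $\mathcal{A}$ and $\tilde{\mathcal{B}}$, so that $\mathcal{C}\subseteq\mathcal{A}\otimes_{\text{min}}\tilde{\mathcal{B}}$ becomes \emph{equivalent} to the assertion that every right slice of $\mathcal{C}$ lands in $\tilde{\mathcal{B}}$, i.e.
\[
1_{\mathcal{A}}\otimes(\phi\otimes\mathrm{id})(c)\in\mathcal{C}\qquad\text{for all } c\in\mathcal{C},\ \phi\in\mathcal{A}^*.
\]
This is now the only thing left. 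I would next cut down the functionals: for fixed $c$ the set of states $S_c:=\{\phi\in S(\mathcal{A}):1_{\mathcal{A}}\otimes(\phi\otimes\mathrm{id})(c)\in\mathcal{C}\}$ is convex, and it is weak$^*$-closed, since $\phi\mapsto(\phi\otimes\mathrm{id})(c)$ is norm-continuous on the weak$^*$-compact state space (approximate $c$ uniformly by elementary tensors) and $\tilde{\mathcal{B}}$ is norm-closed. By Krein--Milman it therefore suffices to show that every \emph{pure} state $\phi$ of $\mathcal{A}$ lies in $S_c$.

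For a pure state $\phi$ this is where the dynamics enters, and it is the hard part. The idea is to manufacture $1_{\mathcal{A}}\otimes(\phi\otimes\mathrm{id})(c)$ inside $\mathcal{C}$ by a ``generalized partition of unity'' built from the $\Gamma$-action. Since $\Gamma\curvearrowright\mathcal{B}$ is trivial, $\gamma\cdot c=(\alpha_\gamma\otimes\mathrm{id})(c)$ remains in the $\Gamma$-invariant algebra $\mathcal{C}$, and since $\mathcal{A}\otimes 1\subseteq\mathcal{C}$ we may form, for any $a_i\in\mathcal{A}$ and $\gamma_i\in\Gamma$,
\[
\sum_i (a_i^*\otimes 1)\,(\gamma_i\cdot c)\,(a_i\otimes 1)\in\mathcal{C}.
\]
Writing $c\approx\sum_j a^{(j)}\otimes b_j$, this element equals $\sum_j\big(\sum_i a_i^*\alpha_{\gamma_i}(a^{(j)})a_i\big)\otimes b_j$ up to a controllable error; so it is enough to approximate, simultaneously on the finite set $\{a^{(j)}\}$, the scalar map $a\mapsto\phi(a)1_{\mathcal{A}}$ by completely positive averages $a\mapsto\sum_i a_i^*\alpha_{\gamma_i}(a)a_i$ with $\sum_i a_i^*a_i=1_{\mathcal{A}}$. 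This averaging is what $\Gamma$-simplicity of $\mathcal{A}$ is meant to supply: the $\Gamma$-orbit of any nonzero element generates $\mathcal{A}$ as a closed two-sided ideal, which, combined with Kadison transitivity inside the irreducible GNS representation of $\phi$, lets one build such averages converging in norm to $\phi(\cdot)1_{\mathcal{A}}$ on any finite set. (In the commutative model $\mathcal{A}=C(X)$ with $\Gamma\curvearrowright X$ minimal and $\phi=\delta_{x_0}$, the $a_i$ are simply a partition of unity subordinate to a finite cover of $X$ by translates $\gamma_i U$ of a neighbourhood $U$ of $x_0$ on which $c$ is nearly constant; this is exactly the density-of-orbit argument.) Granting the averaging, $\sum_i(a_i^*\otimes1)(\gamma_i\cdot c)(a_i\otimes1)\to 1_{\mathcal{A}}\otimes(\phi\otimes\mathrm{id})(c)$ in norm, the left-hand terms lie in $\mathcal{C}$, and so does the limit, placing $\phi\in S_c$. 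The delicate step I expect to require the most care is precisely establishing this norm-averaging to $\phi(\cdot)1_{\mathcal{A}}$ for all pure states from $\Gamma$-simplicity; I would isolate it as a separate lemma, after which everything above is formal.
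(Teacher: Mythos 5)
Your proposal is correct and follows essentially the same route as the paper: reduce via property (S) to showing every right slice of $\mathcal{C}$ lands in $\mathcal{C}\cap(1\otimes_{\text{min}}\mathcal{B})$, reduce to pure states, and approximate $\phi(\cdot)1_{\mathcal{A}}$ in norm by elementary completely positive averages $\sum_i a_i^*\alpha_{\gamma_i}(\cdot)a_i$ with $\gamma_i\in\Gamma$, which preserve $\mathcal{C}$ because $\Gamma$ acts trivially on $\mathcal{B}$. The averaging lemma you defer is exactly the paper's Propositions~\ref{prop:elementaryapprox} and~\ref{prop:approxucpstate} (adapted from Zacharias), built from $\Gamma$-simplicity plus excision of pure states, so your outline matches the paper's proof in all essential respects.
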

We remark that this is a building block towards proving Theorem~\ref{thm:introcontinuousfactortheorem}. Even though the proof of this is a mere modification of arguments in \cite{zacharias2001splitting}, it allows us to conclude splitting results for numerous inclusions.

Property (S) is a natural assumption to have on $C^*$-algebras in this context (see Subsection~\ref{subsec:propertyS} for the definition). If we take $G=\{e\}$, we get back the original splitting theorem obtained in \cite{zacharias2001splitting} (or, \cite{zsido2000criterion}). Moreover, under additional assumptions on the group $G$, we get a complete description of intermediate subalgebras for a wide class of actions. This is the case, for example, when $G=\Gamma_1\times\Gamma_2$ for a $C^*$-simple group $\Gamma_1$ and an amenable group $\Gamma_2$. More generally, we have the following.
\begin{cor}
\label{cor:C*simpleandamenable}
Let $G$ be a non-amenable group with a nontrivial amenable radical $R_a(G)$. Let $\partial G$ be any $G$-boundary. Let $\mathcal{A}$ be an $H$-simple unital $C^*$-algebra with property (S), where $H\le R_a(G)$. Then, every $G$-invariant intermediate $C^*$-algebra $\mathcal{C}$ of the form
\[\mathcal{A}\subset\mathcal{C}\subset \mathcal{A}\otimes_{\text{min}} C(\partial G)\]
splits.
\end{cor}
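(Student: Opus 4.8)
The plan is to deduce this directly from Proposition~\ref{prop:intromainsplittingC*}, applied with $\mathcal{B}=C(\partial G)$ and with the auxiliary subgroup $\Gamma$ taken to be $H$. To do so I must verify the three substantive hypotheses of that proposition: that $\mathcal{A}$ is $G$-simple, that $\mathcal{A}$ is $\Gamma$-simple, and that $\Gamma\curvearrowright\mathcal{B}$ is trivial; property (S) and unitality are assumed outright.

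First I would observe that $H$-simplicity of $\mathcal{A}$ upgrades for free to $G$-simplicity. Indeed, since $H\le G$, every $G$-invariant (closed, two-sided) ideal of $\mathcal{A}$ is in particular $H$-invariant; as $\mathcal{A}$ has no nontrivial $H$-invariant ideals, it can have no nontrivial $G$-invariant ideals either. Taking $\Gamma=H$, the required $\Gamma$-simplicity is then literally the given $H$-simplicity, so two of the three hypotheses are immediate.

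The one genuine input is the triviality of the $\Gamma$-action on $\mathcal{B}=C(\partial G)$. This rests on the standard fact that the amenable radical $R_a(G)$ acts trivially on every $G$-boundary. I would recall the argument: the Furstenberg boundary $\partial_F G$ is the universal $G$-boundary, a normal amenable subgroup must act trivially on the minimal strongly proximal space $\partial_F G$ (so $R_a(G)$ fixes $\partial_F G$ pointwise), and any $G$-boundary $\partial G$ is a $G$-equivariant continuous image of $\partial_F G$; hence $R_a(G)$ also fixes $\partial G$ pointwise. Since $H\le R_a(G)$, the subgroup $H$ acts trivially on $\partial G$, and therefore trivially on $C(\partial G)$. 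With $\Gamma=H$ this is exactly the requirement that $\Gamma\curvearrowright\mathcal{B}$ be trivial.

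With all hypotheses in place, Proposition~\ref{prop:intromainsplittingC*} yields that every $G$-invariant intermediate $\mathcal{C}$ with $\mathcal{A}\subset\mathcal{C}\subset\mathcal{A}\otimes_{\text{min}}C(\partial G)$ has the form $\mathcal{C}=\mathcal{A}\otimes_{\text{min}}\widetilde{\mathcal{B}}$ for a $G$-invariant $C^*$-subalgebra $\widetilde{\mathcal{B}}\subseteq C(\partial G)$, which is the asserted splitting. The main point to get right is the triviality of the amenable-radical action on the boundary; once that fact is invoked, the corollary is a direct specialization of the proposition. The non-amenability of $G$ and the nontriviality of $R_a(G)$ are not needed in the argument itself and serve only to make the statement non-vacuous, the former guaranteeing the existence of genuine (nontrivial) boundaries and the latter leaving room for a nontrivial subgroup $H$.
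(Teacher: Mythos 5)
Your proof is correct and follows the same route as the paper: both reduce to Proposition~\ref{prop:intromainsplittingC*} with $\Gamma=H$ after noting that $H\le R_a(G)=\mathrm{Ker}(G\curvearrowright\partial_FG)$ acts trivially on any $G$-boundary (the paper cites Furman for this identification of the amenable radical with the kernel of the Furstenberg boundary action). Your explicit remark that $H$-simplicity implies $G$-simplicity is a detail the paper leaves implicit, and it is correctly handled.
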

At the same time,  we also prove a splitting theorem for the cases when $\mathcal{A}$ is a reduced group $C^*$-algebra for a wide class of groups $\Gamma$ acting non-faithfully on a unital $C^*$-algebra $\mathcal{B}$. To be precise, we show the following.
\begin{cor}
\label{cor:introsplittingacylindricallyhyperbolic}
Let $G$ be a $C^*$-simple acylindrically hyperbolic group with property-AP. Let $\mathcal{B}$ be a unital $G$-$C^*$-algebra such that $G\curvearrowright\mathcal{B}$ is non-faithful. Then, every $G$-invariant intermediate $C^*$-subalgebra $\mathcal{C}$ with
\[C_r^*(G)\otimes_{\text{min}} \mathbb{C}\subset\mathcal{C}\subset C_r^*(G)\otimes_{\text{min}}\mathcal{B}\]
is a tensor product.
\end{cor}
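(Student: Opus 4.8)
The plan is to obtain the corollary as a direct application of Proposition~\ref{prop:intromainsplittingC*} with $\mathcal{A}=C_r^*(G)$. First I would fix the ambient $G$-action: $G$ acts on $C_r^*(G)$ by conjugation, $\gamma\cdot x=\lambda_\gamma x\lambda_\gamma^*$, and on $C_r^*(G)\otimes_{\text{min}}\mathcal{B}$ by the diagonal action, so that the phrase ``$G$-invariant intermediate subalgebra'' refers to this diagonal action. With this fixed, it remains only to verify the structural hypotheses of the Proposition for $\mathcal{A}=C_r^*(G)$, together with the existence of a suitable subgroup $\Gamma\le G$.

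Since $G$ is $C^*$-simple, $C_r^*(G)$ is a simple $C^*$-algebra. In particular it has no nontrivial $G$-invariant (two-sided, closed) ideals, so it is $G$-simple, and for the very same reason it is $\Gamma$-simple for \emph{every} subgroup $\Gamma\le G$. (For an acylindrically hyperbolic $G$, $C^*$-simplicity is controlled by the finite radical, but the argument uses only that $C_r^*(G)$ is simple.) Next I would check property (S). This is exactly where the hypothesis property-AP is used: a discrete group has the approximation property precisely when its reduced $C^*$-algebra has the operator approximation property, which in turn is equivalent to the slice map property of Tomiyama, i.e. to property (S). Hence $C_r^*(G)$ has property (S).

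For the dynamical input I would set $N=\ker(G\curvearrowright\mathcal{B})$. Non-faithfulness of the action gives $N\neq\{e\}$, and by definition $N$ acts trivially on $\mathcal{B}$. Taking $\Gamma=N$, the algebra $C_r^*(G)$ is $N$-simple (being simple) and $N\curvearrowright\mathcal{B}$ is trivial, so all hypotheses of Proposition~\ref{prop:intromainsplittingC*} are satisfied. The Proposition then produces a $G$-invariant $C^*$-subalgebra $\widetilde{\mathcal{B}}\subseteq\mathcal{B}$ with $\mathcal{C}=C_r^*(G)\otimes_{\text{min}}\widetilde{\mathcal{B}}$, which is precisely the asserted splitting.

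The only step requiring genuine care is the verification of property (S): one must confirm that property-AP for $G$ transfers to the slice map property for $C_r^*(G)$ and that this is the exact form of property (S) invoked in Proposition~\ref{prop:intromainsplittingC*}. Every other ingredient is either definitional --- $G$-simplicity and $\Gamma$-simplicity from $C^*$-simplicity, and triviality of the $N$-action from the definition of the kernel --- or immediate once the diagonal $G$-action is fixed, so the proof is short modulo the Proposition.
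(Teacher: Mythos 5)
Your proposal is correct, and the reduction to Proposition~\ref{prop:intromainsplittingC*} (conjugation action on $C_r^*(G)$, diagonal action on the tensor product, property (S) from property-AP via the Haagerup--Kraus/Suzuki route, $\Gamma=\ker(G\curvearrowright\mathcal{B})$ acting trivially on $\mathcal{B}$) is exactly the paper's skeleton. Where you genuinely diverge is in how $\Gamma$-simplicity of $C_r^*(G)$ is obtained. The paper does \emph{not} simply invoke simplicity: it proves (Proposition~\ref{prop:acylindplump}) that the kernel $\Gamma$, being a nontrivial normal subgroup of a $C^*$-simple acylindrically hyperbolic group, is \emph{plump} in $G$ --- via $s$-normality, non-elementarity of $\Gamma\curvearrowright S$, loxodromic elements and triviality of the amenable radical --- and deduces $\Gamma$-simplicity from the resulting Powers-type averaging over elements of $\Gamma$. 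You instead observe that, under the paper's literal definition of $\Gamma$-simplicity (no nontrivial $\Gamma$-invariant closed two-sided ideals), simplicity of $C_r^*(G)$ already implies $\Gamma$-simplicity for \emph{every} subgroup, including $\Gamma=\{e\}$; tracing through Propositions~\ref{prop:elementaryapprox} and~\ref{prop:approxucpstate}, one can then take all the group elements $t_i$ equal to $e$, so the argument degenerates to the classical Zacharias splitting plus equivariance of the resulting $\tilde{\mathcal{B}}$. This is a legitimate and shorter proof of the stated corollary, but it is worth flagging what it reveals: on your reading, the acylindrical hyperbolicity and even the non-faithfulness hypotheses play no role, whereas the paper's plumpness argument produces the strictly stronger conclusion that averaging to scalars can be achieved using elements of $\Gamma$ alone --- the version of $\Gamma$-simplicity that would be needed if $C_r^*(G)$ were merely $G$-simple rather than simple. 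So your route buys brevity at the cost of making the extra hypotheses look redundant; the paper's route demonstrates the mechanism that would survive in settings where one cannot fall back on plain simplicity.
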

The instances of acylindrically hyperbolic groups encompass all non-(virtually) cyclic groups that are hyperbolic relative to proper subgroups, $\text{Out}(F_n)$ for $n > 1$, nearly all mapping class groups with only a few exceptions, groups that are not virtually cyclic and act appropriately on proper CAT($0$)-spaces while also containing elements of rank one, among others. For further details, we refer the readers to \cite{AO} and the citations provided within.

Moreover, using the recently introduced notion of rigid strongly proximal sequences~\cite{AGG}, we prove an IFT for uniformly rigid actions of non-elementary convergence groups and lattices in $SL_d(\mathbb{R})$. In these cases, the underlying space has to be either the limit set of the non-elementary convergence group or the Grassman variety of $l$-dimensional subspaces of $\mathbb{R}^d$ induced by the projective linear transformations on the projective space.
\begin{thm}
\label{thm: convergencegroupandlattices}
\
\begin{enumerate}
    \item Let $G$ be a non-elementary convergence group. Let $G\curvearrowright X$ be a uniformly rigid continuous action and $G\curvearrowright Y$ be the limit set for a non-elementary convergence action of $G$. Then every  $G$-invariant intermediate $C^*$-algebra $Q$ with $C(Y)\subset Q\subset C(X)\otimes_{\text{min}} C(Y)$ splits as a tensor product.
    \item  Let $\Gamma<G=SL_d(\mathbb{R})$, $d\geq 2$, be a lattice. Let $\Gamma\curvearrowright X$ be a uniformly rigid action with a fixed, rigid sequence $\{s_n: n\in\mathbb{N}\}$. Then there exists some $0\leq\ell\leq  n-1$ such that if we set  $\Lambda\curvearrowright Y$ be the action on $Y=Gr(\ell, \mathbb{R}^d)$, the Grassmann variety of $\ell$-dimensional subspaces of $\mathbb{R}^d$ induced by the projective linear transformations of $\Gamma$ on $\mathbb{P}^{d-1}(\mathbb{R})$, the projective space of all lines in $\mathbb{R}^d$. Then every $\Gamma$-invariant intermediate $C^*$-subalgebra $Q$ between $C(Y)$ and $C(X)\otimes_{\text{min}} C(Y)$ splits as a tensor product.
\end{enumerate}
\end{thm}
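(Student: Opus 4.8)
The plan is to cast both statements as instances of the splitting mechanism behind Proposition~\ref{prop:intromainsplittingC*}, with the strongly proximal boundary $C(Y)$ in the role of the $G$-simple factor carrying property (S) and $C(X)$ in the role of the second tensor factor, and to supply --- via the rigid sequence --- a substitute for the ``trivially acting subgroup'' hypothesis that is unavailable here. Writing the inclusion $C(Y)\subset Q\subset C(X)\otimes_{\text{min}}C(Y)$ as functions $f(x,y)$, set $\tilde{\mathcal{A}}:=\{g\in C(X): g\otimes 1\in Q\}$. Since $Q\supseteq 1\otimes C(Y)$ and $Q$ is an algebra, one immediately has $\tilde{\mathcal{A}}\otimes_{\text{min}}C(Y)\subseteq Q$, so the entire content is the reverse inclusion, i.e.\ showing that for $f\in Q$ every slice $f(\cdot,y_0)$ lies in $\tilde{\mathcal{A}}$. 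The first routine step is to record that $C(Y)$ is $G$-simple: in (1) because the limit set of a non-elementary convergence action is a minimal $G$-space, and in (2) because $Gr(\ell,\mathbb{R}^d)$ with its projective $\Gamma$-action is a minimal (indeed strongly proximal) $\Gamma$-space; the second is to verify property (S) for $C(Y)$ from the contracting dynamics, which in both cases is exactly the source of strong proximality.

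The engine of the argument is the rigid sequence. Uniform rigidity of $G\curvearrowright X$ provides $s_n\to\infty$ in $G$ with $\|s_n\cdot g-g\|\to 0$ for every $g\in C(X)$, so $s_n$ is asymptotically trivial on the $C(X)$-factor. On $Y$ the convergence (resp.\ projective) dynamics forces, after passing to a subsequence, north--south behaviour: there are points $a,b\in Y$ with $(s_n)_*\nu\to\delta_a$ weak-$*$ for every probability measure $\nu$ on $Y$ not charging $b$. This pairing of asymptotic triviality on $X$ with total collapse on $Y$ is precisely a rigid strongly proximal sequence in the sense of \cite{AGG}, and it replaces the honest trivially-acting subgroup required by Proposition~\ref{prop:intromainsplittingC*}.

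The core is then a limiting computation. For $f\in Q$ one studies $s_n\cdot f$, where $(s_n\cdot f)(x,y)=f(s_n^{-1}x,s_n^{-1}y)$: rigidity freezes the $X$-coordinate while the collapse drives $s_n^{-1}y$ to the attracting point $a'$ of the inverse sequence, so that along the subsequence $s_n\cdot f$ approaches the function $x\mapsto f(x,a')$, constant in $y$, that is the element $f(\cdot,a')\otimes 1$. Since this element must belong to $Q$, its slice $f(\cdot,a')$ lies in $\tilde{\mathcal{A}}$. Conjugating the rigid sequence by $g\in G$ keeps it uniformly rigid on $X$ while moving the attracting point to $g\cdot a'$, and by minimality of $Y$ the orbit of $a'$ is dense; combined with continuity of $y\mapsto f(\cdot,y)$ and closedness of $\tilde{\mathcal{A}}$ this shows every slice lies in $\tilde{\mathcal{A}}$, hence $f\in\tilde{\mathcal{A}}\otimes_{\text{min}}C(Y)$ and $Q=\tilde{\mathcal{A}}\otimes_{\text{min}}C(Y)$ splits. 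I expect the principal obstacle to be making the collapse rigorous at the operator-algebra level: because the north--south convergence is non-uniform near the repelling point $b$, the passage $s_n\cdot f\to f(\cdot,a')\otimes 1$ is not a norm limit, so it must be implemented through states and slice maps --- this is exactly the technical content packaged by the rigid strongly proximal sequences of \cite{AGG} --- while checking that the exceptional fibre over $b$ does not obstruct membership of the limit in $Q$.

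The only point special to part (2) is pinning down the integer $\ell$ and the correct factor $Y$. Writing the rigid sequence $\{s_n\}\subset\Gamma<SL_d(\mathbb{R})$ in its $KAK$ (Cartan) decomposition, the gaps in its Cartan projection determine an asymptotic contraction rank; the attracting sets of such sequences on $\mathbb{P}^{d-1}(\mathbb{R})$ are carried by the Grassmannian $Gr(\ell,\mathbb{R}^d)$ for the corresponding $\ell$ with $0\le\ell\le d-1$, which is the minimal strongly proximal $\Gamma$-factor adapted to $\{s_n\}$. Running the collapsing argument above on $Y=Gr(\ell,\mathbb{R}^d)$ then yields the splitting, with the value of $\ell$ dictated entirely by the asymptotics of the given rigid sequence.
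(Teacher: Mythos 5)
There is a genuine gap at the heart of your argument. Your core step asserts that for $f\in Q$ the translates $s_n\cdot f$ ``approach'' $f(\cdot,a')\otimes 1$ and that ``since this element must belong to $Q$, its slice $f(\cdot,a')$ lies in $\tilde{\mathcal{A}}$.'' But $Q$ is only norm-closed, and the convergence $s_n^{-1}y\to a'$ furnished by the boundary dynamics is pointwise in $y$ and genuinely non-uniform (even after passing to an RSP sequence, where $(s_n)_*\nu\to\delta_{a'}$ for \emph{all} $\nu$, the convergence $s_n y\to a'$ is still not uniform on $Y$ unless $Y$ is a point). Hence $s_n\cdot f\to f(\cdot,a')\otimes 1$ fails in norm, and there is no reason for the limit to lie in $Q$. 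You flag this obstacle yourself but defer its resolution to ``the technical content packaged by'' \cite{AGG}; that is not where the resolution lives, and as written the proof does not close. Two smaller points: property (S) for $C(Y)$ has nothing to do with contracting dynamics (it is automatic from nuclearity of commutative $C^*$-algebras), and Proposition~\ref{prop:intromainsplittingC*} is not actually applicable here since its hypothesis of a subgroup acting trivially on the second factor fails --- your invocation of it is only a framing device, not a step of the proof.

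The paper avoids this difficulty entirely by never trying to manufacture new elements of $Q$. It reduces splitting to the two-point separation criterion of Proposition~\ref{prop: equivalence relation formulation on splitting}: one must show that if $q(x,y_0)=q(x',y_0)$ for all $q\in Q$ at a single $y_0$, then $q(x,y)=q(x',y)$ for all $y$. Taking $t_i=gs_ig^{-1}$ with $g\in N(a,U)$ (still uniformly rigid on $X$, with $t_iy_0\to ga$), the identity $q(t_ix,t_iy_0)=(t_i^{-1}q)(x,y_0)$ together with the hypothesis applied to $t_i^{-1}q\in Q$ gives $q(x,ga)=q(x',ga)$ using only continuity of the single function $q$ and pointwise convergence at the single point $y_0$; minimality of $Y$ then makes the set of such $ga$ dense. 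Your conjugation-and-minimality step is exactly right and mirrors the paper; the fix is to route it through the separation criterion rather than through norm convergence of $s_n\cdot f$ in $Q$. The existence of the RSP sequences in parts (1) and (2) is then verified, as you indicate, by the arguments of \cite{AGG}.
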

The notion of rigid actions for topological actions was introduced by Glasner-Maon~\cite{glasner1989rigidity}. Motivated by this, we introduce the notion of a uniformly rigid action for a unital $\Gamma$-$C^*$-algebra $\mathcal{A}$, not necessarily commutative (see Definition~\ref{def: strongly uniformly rigid}). It agrees with the notion established in the topological dynamics (see Proposition~\ref{prop: ur is weaker than sur}). Moreover, there are many natural classes of actions on $C^*$-algebras that fall into our mold, for example, $\mathcal{A}$ being a finite-dimensional $C^*$-algebra and the action $G\curvearrowright\mathcal{A}$ implemented by $*$-automorphisms. Punting the boundary actions against uniformly rigid actions, we prove the following.
\begin{thm}\label{thm:introsplitting using RSP sequence in the commutative times noncommutative setting}
Let $\beta: G\curvearrowright \mathcal{A}$ and $\alpha: G\curvearrowright Y$ be two continuous actions, where $\mathcal{A}$ is a $C^*$-algebra and $Y$ is a compact Hausdorff space.
Assume the following conditions hold:
\begin{itemize}
    \item $\alpha: G\curvearrowright Y$ is a $G$-boundary, i.e., it is a strongly proximal minimal $G$-action;
    \item $\beta: G\curvearrowright \mathcal{A}$ is a uniformly rigid action;
    \item There exists an $(\alpha,\beta)$-RSP sequence $\{s_i\}\subset G$.
\end{itemize}
Then every $G$-invariant intermediate $C^*$-algebra $Q$ with $C(Y)\otimes_{\text{min}}\mathbb{C}\subset Q\subset C(Y)\otimes_{\text{min}} \mathcal{A}$ splits as a tensor product.
\end{thm}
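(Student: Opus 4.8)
The plan is to identify $C(Y)\otimes_{\text{min}}\mathcal A$ with the $C^*$-algebra $C(Y,\mathcal A)$ of continuous $\mathcal A$-valued functions on $Y$ (legitimate since $C(Y)$ is nuclear), under which an element $q$ becomes a section $y\mapsto q(y):=(\mathrm{ev}_y\otimes\mathrm{id})(q)$ and the $G$-action reads $(g\cdot q)(y)=\beta(g)\big(q(\alpha(g^{-1})y)\big)$. The subalgebra $C(Y)\otimes_{\text{min}}\mathbb C\subset Q$ is exactly the set of scalar sections $y\mapsto f(y)\,1_{\mathcal A}$. I then set $\tilde{\mathcal A}:=\overline{\mathrm{span}}\{q(y): q\in Q,\ y\in Y\}$; since evaluation at a point is a $*$-homomorphism and $Q$ is a $C^*$-algebra, $\tilde{\mathcal A}$ is a unital $C^*$-subalgebra of $\mathcal A$ (unital because $q=1\otimes 1\in Q$ gives $q(y)=1_{\mathcal A}$). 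By construction $q(y)\in\tilde{\mathcal A}$ for every $q\in Q$ and $y\in Y$, so $Q\subseteq C(Y,\tilde{\mathcal A})=C(Y)\otimes_{\text{min}}\tilde{\mathcal A}$; this is the easy inclusion.

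It remains to prove $C(Y)\otimes_{\text{min}}\tilde{\mathcal A}\subseteq Q$. As $Q$ is an algebra containing all scalar sections $f\otimes 1$, it suffices to show that every constant section $1\otimes a$ with $a\in\tilde{\mathcal A}$ lies in $Q$; multiplying by $f\otimes 1$ then yields $f\otimes a\in Q$, and hence the whole tensor product. Since $\tilde{\mathcal A}$ is generated by the slice values, the heart of the matter is the following claim: \emph{for every $q\in Q$ there is a point $y_0\in Y$ with $1\otimes q(y_0)\in Q$.}

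To prove the claim I would exploit the $(\alpha,\beta)$-RSP sequence $\{s_i\}$. Strong proximality of $\alpha$ along $\{s_i\}$ supplies a point $y_0\in Y$ with $(\alpha(s_i^{-1}))_*\nu\to\delta_{y_0}$ weak-$*$ for every $\nu\in P(Y)$, while uniform rigidity of $\beta$ forces $\sup_{z\in Y}\|\beta(s_i)(q(z))-q(z)\|\to 0$ (uniform convergence of $\beta(s_i)$ to the identity on the compact set $q(Y)\subset\mathcal A$). For a product state $\nu\otimes\phi$ on $C(Y)\otimes_{\text{min}}\mathcal A$ one then computes
\[(\nu\otimes\phi)(s_i\cdot q)=\int_Y \phi\big(\beta(s_i)q(\alpha(s_i^{-1})y)\big)\,d\nu(y)\xrightarrow[i\to\infty]{}\phi(q(y_0))=(\nu\otimes\phi)(1\otimes q(y_0)),\]
where the limit is obtained by first replacing $\beta(s_i)$ by the identity (using uniform rigidity) and then applying the weak-$*$ convergence $(\alpha(s_i^{-1}))_*\nu\to\delta_{y_0}$ to the continuous scalar function $\phi\circ q\in C(Y)$. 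Since the product states are total in the dual and the net $(s_i\cdot q)$ is norm-bounded by $\|q\|$, it follows that $s_i\cdot q\to 1\otimes q(y_0)$ in the weak topology of $C(Y)\otimes_{\text{min}}\mathcal A$. Each $s_i\cdot q$ lies in $Q$ by $G$-invariance, and a norm-closed convex set is weakly closed; hence $1\otimes q(y_0)\in Q$, proving the claim. I expect this limiting step to be the main obstacle: the pointwise convergence $\alpha(s_i^{-1})y\to y_0$ typically fails at the repelling point of the sequence, so one cannot hope for norm convergence of $s_i\cdot q$, and the argument must be run at the level of states (equivalently, in the weak topology), where the single exceptional point is invisible to every $\nu$ that does not charge it.

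Finally, I would verify that the elements $q(y_0)$ produced above exhaust $\tilde{\mathcal A}$ and that $\tilde{\mathcal A}$ is $G$-invariant, so that the splitting is $G$-equivariant. For $g\in G$ one has $g\cdot(1\otimes q(y_0))=1\otimes\beta(g)q(y_0)\in Q$; and for arbitrary $y'\in Y$, minimality of $\alpha$ lets us write $y'=\lim_k\alpha(g_k)y_0$, while $q(\alpha(g_k)y_0)=\beta(g_k)\big((g_k^{-1}\cdot q)(y_0)\big)$ with $g_k^{-1}\cdot q\in Q$. Thus every slice value is a norm-limit of elements $\beta(g)(\tilde q(y_0))$ with $\tilde q\in Q$, so $\tilde{\mathcal A}=\overline{C^*\{q(y_0):q\in Q\}}$ is a $G$-invariant unital $C^*$-subalgebra of $\mathcal A$ and, by the claim together with norm-closedness, $1\otimes\tilde{\mathcal A}\subseteq Q$. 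Combined with $C(Y)\otimes_{\text{min}}\mathbb C\subseteq Q$ and the easy inclusion, this yields $Q=C(Y)\otimes_{\text{min}}\tilde{\mathcal A}$, the desired $G$-equivariant tensor splitting.
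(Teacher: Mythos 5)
Your overall architecture (reduce to showing $1\otimes q(y^*)\in Q$ for the attracting point $y^*$ of the RSP sequence, then propagate to all slice values by minimality and $G$-equivariance, then invoke $C(Y,\tilde{\mathcal A})=C(Y)\otimes_{\text{min}}\tilde{\mathcal A}$) is sound and is genuinely different from the paper's route, which instead transfers the problem to the commutative system $Y\times S(\mathcal A)$ (after checking that $G\curvearrowright S(\mathcal A)$ inherits uniform rigidity) and then pulls the splitting back through Theorem~\ref{thm:bridgecnc}. But your key limiting step has a genuine gap. You deduce weak convergence $s_i\cdot q\to 1\otimes q(y^*)$ from convergence against product states $\nu\otimes\phi$, on the grounds that product states are ``total in the dual'' and the net is bounded. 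Product functionals do separate points of $C(Y)\otimes_{\text{min}}\mathcal A$, so their span is weak$^*$-dense in the dual; but it is \emph{not} norm-dense, and for a bounded net, testing against a merely weak$^*$-dense subspace of the dual does not yield weak convergence. (Concretely, for $Y=\mathcal A^{**}$-free examples such as $C([0,1]^2)$, arc-length measure on the diagonal has total-variation distance $\ge 1$ from every finite sum of product measures, so the span of product functionals is far from norm-dense; and in $\ell^1$ the basis vectors converge to $0$ against the weak$^*$-dense subspace $c_0\subset\ell^\infty$ without converging weakly.) Since the functional you ultimately need to test — an arbitrary $\omega$ annihilating $Q$, via Hahn--Banach — need not lie in the norm closure of the span of product states, the argument as written does not close.

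The good news is that the convergence you want is true and can be repaired, but it needs an extra idea of Cauchy--Schwarz type rather than a density appeal. Writing $h_i(y)=q(\alpha(s_i)y)-q(y^*)$ (note: with the paper's convention the RSP condition gives $(s_i)_*\nu\to\delta_{y^*}$, so you should run the computation with $s_i^{-1}\cdot q$ rather than $s_i\cdot q$ — a second, purely notational slip in your write-up), one has, for an arbitrary state $\omega$ with $\nu:=\omega|_{C(Y)}$,
\[
|\omega(h_i)|^2\le\omega(h_ih_i^*)\le\omega\bigl(g_i^2\otimes 1\bigr)=\int_Y\|q(z)-q(y^*)\|^2\,d\bigl((s_i)_*\nu\bigr)(z)\longrightarrow 0,
\]
where $g_i(y)=\|h_i(y)\|\in C(Y)$ and the pointwise domination $h_i(y)h_i(y)^*\le\|h_i(y)\|^2 1_{\mathcal A}$ is used; combined with the uniform-rigidity estimate $\sup_z\|\beta(s_i^{-1})(q(z))-q(z)\|\to 0$ (which you justified correctly via equicontinuity on the compact set $q(Y)$), this gives $\omega(s_i^{-1}\cdot q)\to\omega(1\otimes q(y^*))$ for \emph{every} state, hence for every bounded functional, and Mazur then finishes your claim. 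With that repair your proof becomes a valid and rather more direct alternative to the paper's argument, close in spirit to its Proposition~\ref{prop:invariantinclusion}.
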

Finally, we want to mention that the works \cites{bader2006factor,glasner2023intermediate} influenced us to pursue this direction. In a subsequent paper, we will focus on the measurable setup. Note that we frequently use product and diagonal actions in this paper; one should be careful of the difference.

\subsection*{Organization of the paper} In addition to this section, there are six other chapters in this paper. In Section~\ref{sec:preliminaries}, we put together some basic facts and results, which are used later. We prove Proposition~\ref{prop:intromainsplittingC*} in Section~\ref{sec:simpleinvariantC*-algebras}. While we are here, we take care of Corollary~\ref{cor:C*simpleandamenable} and Corollary~\ref{cor:introsplittingacylindricallyhyperbolic}. In Section~\ref{sec:intermediate}, we prove Theorem~\ref{thm:introcontinuousfactortheorem}. We discuss about the assumptions made in our splitting results in Section~\ref{sec:simplicityassumption}. In particular, we show why they are necessary (see Proposition~\ref{prop:condforsplitting} and Theorem~\ref{thm:hom}). Finally, in Section~\ref{sec:morenoncommutative}, we introduce the notion of uniformly rigid actions and prove Theorem~\ref{thm:introsplitting using RSP sequence in the commutative times noncommutative setting}. 
\subsection*{Acknowledgements} The first named author thanks Mehrdad Kalantar and Yair Hartman for numerous discussions about the intermediate factor theorems. We thank Hanna Oppelmayer for many helpful conversations. We also thank Hanfeng Li and Adam Skalski for several insightful comments and necessary corrections. We also thank the anonymous reviewers for their numerous comments and corrections. Y. J. is partially supported by the National Natural Science Foundation of China (Grant No. 12001081). T. A. is partially supported by the European Research Council (ERC) under the European Union’s Seventh Framework Program (FP7-2007-2013) (Grant agreement No.101078193) and Israel Science Foundation under the grant
ISF 1175/18.
\section{Preliminaries and Notation}
\label{sec:preliminaries}
Throughout this paper, we shall only consider discrete groups denoted by $G$. All the $C^*$-algebras considered will be unital. A unital $C^*$-algebra $\mathcal{A}$ on which $G$ acts by $*$-automorphisms will be called a $G$-$C^*$-algebra. $\mathcal{A}$ is called $G$-simple if the only $G$-invariant closed two sided ideals in $\mathcal{A}$ are either $(0)$ or $\mathcal{A}$.
Given a $G$-$C^*$-algebra $\mathcal{A}$ and a $G$-space $X$, i.e., a continuous action $G\curvearrowright X$ on a compact Hausdorff space $X$, we form the minimal tensor product $C(X)\otimes_{\text{min}} \mathcal{A}$, which is still a $G$-$C^*$-algebra with respect to the diagonal $G$-action. We sometimes identify $C(X)\otimes_{\text{min}} \mathcal{A}$ with $C(X,\mathcal{A})$ (see \cite[Page 849]{KR2}). We interchangeably use this identification without explicitly mentioning it. We are interested in determining the structure of $G$-invariant $C^*$-subalgebras $Q$ of the form
\[C(X)\subset Q\subset C(X)\otimes_{\text{min}}\mathcal{A}.\]
We call such subalgebras as intermediate $G$-$C^*$-subalgebras. Whenever $Q$ is a tensor product of the form $C(X)\otimes_{\text{min}}\mathcal{B}$ for some $G$-invariant subalgebra $\mathcal{B}$ of $\mathcal{A}$, we say that $Q$ splits. It is well-known that $Q$ can be written as a direct integral of subalgebras of $\mathcal{A}$ over $X$ (an analogous result in the von Neumann algebraic setup appears in \cite[Proposition 14.1.18]{KR2}). We include the proof for the sake of completion.
\begin{lemma}\label{lem:smallparts}
Let $X$ be a compact Hausdorff space and $\mathcal{A}$ be a $C^*$-algebra. Let $C(X)\subset Q\subset C(X)\otimes_{\text{min}} \mathcal{A}$ be an intermediate $C^*$-subalgebra. For any $x\in X$, set $Q_x=\{f(x)\mid f\in Q\}\subset \mathcal{A}$, where we have identified $C(X)\otimes_{\text{min}} \mathcal{A}$ with $C(X, \mathcal{A})$. Then $Q=\int_{x\in X}Q_x:=\{f\in C(X, \mathcal{A})\mid f(x)\in Q_x\}$.
\end{lemma}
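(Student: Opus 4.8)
The plan is to prove the two inclusions $Q\subseteq \int_{x\in X}Q_x$ and $\int_{x\in X}Q_x\subseteq Q$ separately. The first is immediate from the definitions: if $f\in Q$ then $f(x)\in Q_x$ for every $x\in X$, so $f\in\int_{x\in X}Q_x$. The substance lies in the reverse inclusion, and the two structural facts I would use about $Q$ are that it is norm-closed (being a $C^*$-subalgebra) and that it is a module over $C(X)$: since $C(X)=C(X)\otimes\mathbb{C}\subseteq Q$ and $Q$ is an algebra, for every $g\in C(X)$ and $h\in Q$ the element $(g\otimes 1)h$, which under the identification with $C(X,\mathcal{A})$ is the pointwise product $y\mapsto g(y)h(y)$, again lies in $Q$. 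This module property is exactly the feature that will let a partition-of-unity construction stay inside $Q$.

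Fix $f\in C(X,\mathcal{A})$ with $f(x)\in Q_x$ for all $x$, and let $\varepsilon>0$. For each point $x_0\in X$, the hypothesis $f(x_0)\in Q_{x_0}$ produces some $h_{x_0}\in Q$ with $h_{x_0}(x_0)=f(x_0)$; by continuity of $f-h_{x_0}$ there is an open neighborhood $U_{x_0}\ni x_0$ on which $\|f(y)-h_{x_0}(y)\|<\varepsilon$. By compactness of $X$, finitely many such neighborhoods $U_{x_1},\dots,U_{x_n}$ cover $X$, with associated elements $h_1,\dots,h_n\in Q$. Choosing a partition of unity $\{g_i\}_{i=1}^n\subset C(X)$ subordinate to this cover (available since a compact Hausdorff space is normal), I would set $H=\sum_{i=1}^n g_i h_i$. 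The module property gives $H\in Q$, and for every $y\in X$ the estimate
\[\|f(y)-H(y)\|=\Big\|\sum_{i=1}^n g_i(y)\big(f(y)-h_i(y)\big)\Big\|\le \sum_{i=1}^n g_i(y)\,\|f(y)-h_i(y)\|\le\varepsilon\]
holds, using $\sum_i g_i(y)=1$ together with the fact that $g_i(y)\neq 0$ forces $y\in U_{x_i}$ and hence $\|f(y)-h_i(y)\|<\varepsilon$. Therefore $\|f-H\|<\varepsilon$ with $H\in Q$.

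Since $\varepsilon>0$ was arbitrary, $f$ lies in the norm-closure of $Q$, which equals $Q$ because $Q$ is closed; hence $f\in Q$, completing the reverse inclusion. The only genuine obstacle is the passage from the pointwise membership $f(x)\in Q_x$ for all $x$ to the global membership $f\in Q$, and this is precisely what the partition-of-unity gluing resolves: it assembles the locally chosen approximants $h_{x_0}$ into a single element of $Q$ that is uniformly close to $f$, after which closedness of $Q$ finishes the argument. I would also note in passing that each $Q_x=\mathrm{ev}_x(Q)$ is in fact a $C^*$-subalgebra of $\mathcal{A}$, being the image of $Q$ under the evaluation $*$-homomorphism $\mathrm{ev}_x\colon C(X,\mathcal{A})\to\mathcal{A}$, although this is not needed for the equality $Q=\int_{x\in X}Q_x$ itself.
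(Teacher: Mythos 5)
Your proposal is correct and follows essentially the same route as the paper's proof: the trivial inclusion $Q\subseteq\int_{x\in X}Q_x$, then a partition-of-unity gluing of locally chosen approximants $h_{x_i}\in Q$ (kept inside $Q$ via the $C(X)$-module property) and the norm-closedness of $Q$ for the reverse inclusion. The estimate $\|f-H\|\le\varepsilon$ is carried out exactly as in the paper, so there is nothing to add.
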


\begin{proof}
For any $x\in X$, note that $\pi_x: C(X,\mathcal{A})\rightarrow \mathcal{A}$ defined by $\pi_x(f)=f(x)$ is a *-homomorphism between $C^*$-algebras. Hence $Q_x=\pi_x(Q)\subset \mathcal{A}$ is a $C^*$-subalgebra by \cite[Theorem 4.1.9]{KR1}.
By definition of $Q_x$, it is also clear that $Q\subset \int_{x\in X}Q_x$.
We are left to show the converse inclusion also holds.

Fix any $0<\epsilon<1$ and any $f\in \int_{x\in X}Q_x$. For any $x\in X$, there is some $f_x\in Q$ such that $f_x(x)=f(x)$ since $f(x)\in Q_x$. Since $f_x$ is continuous, there is some small open neighborhood $U_x\ni x$ such that
\begin{equation}\label{eq:newineq}
\left\|f_x(y)-f(y)\right \|<\epsilon,~\forall y\in U_x.\end{equation}
Since $X$ is compact, we can find finitely many points $x_1,\ldots, x_n$ in $X$ such that $X=\cup_{i=1}^nU_{x_i}$. Let $\{h_i\}_{i=1}^n$ be a partition of unity subordinate to $\{U_{x_i}\}_{i=1}^n$, i.e., $0\leq h_i\in C(X)\subset Q$, $\text{supp}(h_i)\subset U_{x_i}$ and $\sum_{i=1}^nh_i\equiv 1$.

Note that $a:=\sum_{i=1}^nh_if_{x_i}\in Q$ and let us prove that $||a-f||<\epsilon$.

Let $y\in X$. Let $i_1, i_2, \ldots,i_k\subseteq \{1,2,\ldots,n\}$ be such that $y\not\in U_{x_{i}}$ iff $i\not\in\{i_1,i_2,\ldots,i_k\}$. Then, it follows that $h_{i}(y)= 0$ for $i\not\in\{i_1,i_2,\ldots,i_k\}$. Moreover, we have that $\left\|f_{x_{i_j}}(y)-f_{x_{i_j}}(x_{i_j})\right\|<\epsilon$ and $\sum_{i=1}^nh_i(y)=\sum_{j=1}^kh_{i_j}(y)=1.$

For any $y\in X$, we have that
\begin{align*}
\left\|a(y)-f(y)\right\|&=\left\|\sum_{i=1}^nh_i(y)f_{x_i}(y)-\sum_{i=1}^nh_i(y)f(y)\right\|\\&=\left\|\sum_{j=1}^kh_{i_j}(y)f_{x_{i_j}}(y)-\sum_{j=1}^kh_{i_j}(y)f(y)\right\|\\&\le \sum_{j=1}^kh_{i_j}(y)\left\|f_{x_{i_j}}(y)-f(y)\right\|\stackrel{\eqref{eq:newineq}}{\le} \sum_{i=1}^nh_i(y)\epsilon=\epsilon.
\end{align*}
Therefore, it follows that
$\|a-f\|<\epsilon.$
Since $\epsilon$ is arbitrary, we deduce that $f\in Q$.  This finishes the proof.
\end{proof}
An equivalent description also exists for ideals $I$ inside $C(X,\mathcal{A})$.
\begin{lemma}\cite[Theorem 1 in Ch.V.26.2]{naimark2012normed}\label{lem: Naimark}
Let $X$ be a compact Hausdorff space and $\mathcal{A}$ be any $C^*$-algebra. Every closed ideal $I$ of $C(X, \mathcal{A})$ is of the form $\{f\in C(X, \mathcal{A})\mid f(x)\in I_x, \forall~x\in X\}$, where $I_x$ is the closed ideal of $A$ defined by $I_x=\{f(x)\in A\mid f\in I\}$ for all $x\in X$.
\end{lemma}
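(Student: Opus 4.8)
The plan is to follow the same partition-of-unity strategy used in the proof of Lemma~\ref{lem:smallparts}, with the intermediate subalgebra $Q$ replaced by the closed ideal $I$; the only genuinely new ingredient is that one exploits the \emph{ideal} structure, rather than multiplicative closure of a subalgebra, to keep the approximating elements inside $I$.

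First I would record that each $I_x$ is indeed a closed ideal of $\mathcal{A}$. The evaluation map $\pi_x\colon C(X,\mathcal{A})\to\mathcal{A}$, $\pi_x(f)=f(x)$, is a surjective $*$-homomorphism (surjectivity via constant functions), so $I_x=\pi_x(I)$ is an ideal of $\mathcal{A}$; its closedness follows from the standard fact that the image of a closed ideal under a surjective $*$-homomorphism of $C^*$-algebras is closed. Concretely, $I+\ker\pi_x$ is a closed ideal of $C(X,\mathcal{A})$ (a sum of closed ideals in a $C^*$-algebra is closed), and $\pi_x$ induces an isometric isomorphism $C(X,\mathcal{A})/\ker\pi_x\cong\mathcal{A}$ carrying $(I+\ker\pi_x)/\ker\pi_x$ onto $I_x$, whence $I_x$ is closed. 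Writing $J:=\{f\in C(X,\mathcal{A})\mid f(x)\in I_x,\ \forall x\in X\}$, the inclusion $I\subseteq J$ is immediate from the definition of $I_x$.

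The substance is the reverse inclusion $J\subseteq I$. I would fix $f\in J$ and $\epsilon>0$. For each $x\in X$, since $f(x)\in I_x=\pi_x(I)$, choose $f_x\in I$ with $f_x(x)=f(x)$, and use continuity of $f_x-f$ to produce an open neighborhood $U_x\ni x$ on which $\|f_x(y)-f(y)\|<\epsilon$. By compactness of $X$, extract a finite subcover $U_{x_1},\dots,U_{x_n}$ and choose a subordinate partition of unity $\{h_i\}\subset C(X)$. Set $a:=\sum_{i=1}^n h_i f_{x_i}$. The key point, and the one place the ideal hypothesis is used, is that $a\in I$: identifying $h_i$ with $h_i\cdot 1\in C(X,\mathcal{A})$ (here unitality of $\mathcal{A}$ is convenient; in the non-unital case one replaces $1$ by a bounded approximate identity and passes to the limit, using that $I$ is closed), each summand $h_i f_{x_i}=(h_i\cdot 1)f_{x_i}$ lies in $I$ because $I$ is an ideal and $f_{x_i}\in I$. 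The estimate $\|a-f\|<\epsilon$ is then exactly the pointwise computation carried out in Lemma~\ref{lem:smallparts}. Since $I$ is closed and $\epsilon>0$ was arbitrary, $f\in I$, giving $J\subseteq I$ and hence $I=J$.

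I expect the main (though modest) obstacle to be the closedness of $I_x$, which rests on the structural fact about images of ideals under surjective $*$-homomorphisms rather than on any soft argument. The partition-of-unity portion is a mechanical transcription of the proof of Lemma~\ref{lem:smallparts}, the substantive modification being the invocation of the ideal property of $I$ in place of the multiplicative closure of $Q$.
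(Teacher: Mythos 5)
The paper does not actually prove this lemma---it is quoted from Naimark's book---but your argument is correct and is precisely the partition-of-unity argument the paper uses for Lemma~\ref{lem:smallparts}, which the paper itself later remarks carries over to the ideal setting. The only genuinely new ingredients you need, namely closedness of $I_x$ (which also follows more directly from the fact that a $*$-homomorphic image of a $C^*$-algebra, here of the $C^*$-algebra $I$, is closed) and the membership $h_i f_{x_i}=(h_i\cdot 1)f_{x_i}\in I$ via the ideal property, are both handled correctly.
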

Let $S(\mathcal{A})$ denote the state space of $\mathcal{A}$. We equip $S(\mathcal{A})$ with the weak$^*$-topology. Given a subalgebra of $\mathcal{B}\subset C(X,\mathcal{A})$, we can find an equivalence relation on $X\times S(\mathcal{A})$ with which $\mathcal{B}$ can be located (not as a $C^*$-algebra!). Such an identification is handy for us, especially when $\mathcal{A}$ is abelian.
\begin{proposition}\label{prop: locating subalgebras in C(X, A)}
Let $X$ be a compact Hausdorff space and $\mathcal{B}\subset C(X,\mathcal{A})$ be a $C^*$-algebra. Then there is a closed equivalence relation $\sim$ on $X\times S(\mathcal{A})$ for which there exists a continuous injective map $\phi: \mathcal{B}\to C(X\times S(\mathcal{A})/{\sim})$. Moreover, we also have a map $J:X\times S(\mathcal{A})\to S(\mathcal{B})$ which is dual to $\phi$ in the sense that $\phi(b)([x,\tau])=J(x,\tau)(b)$ for all $b\in\mathcal{B}$ and all $(x,\tau)\in X\times S(\mathcal{A})$.
\end{proposition}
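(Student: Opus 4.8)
The plan is to build everything out of the single natural evaluation pairing between $X\times S(\mathcal{A})$ and $\mathcal{B}\subset C(X,\mathcal{A})$. For $(x,\tau)\in X\times S(\mathcal{A})$ and $b\in\mathcal{B}$, set
\[
J(x,\tau)(b):=\tau\bigl(b(x)\bigr).
\]
First I would check that this defines a state on $\mathcal{B}$: the evaluation $b\mapsto b(x)$ is a unital $*$-homomorphism $C(X,\mathcal{A})\to\mathcal{A}$, so composing with the state $\tau$ gives a positive unital linear functional on $\mathcal{B}$ (here I use the standing convention that $\mathcal{B}$ is unital, containing the unit of $C(X,\mathcal{A})$, so that $J(x,\tau)(1)=\tau(1_{\mathcal{A}})=1$). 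This produces the map $J:X\times S(\mathcal{A})\to S(\mathcal{B})$.

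Next I would establish continuity of $J$. The key observation is that the pairing $S(\mathcal{A})\times\mathcal{A}\to\mathbb{C}$, $(\tau,a)\mapsto\tau(a)$, is jointly continuous for the weak$^*$ topology on $S(\mathcal{A})$ and the norm topology on $\mathcal{A}$; this follows from the uniform bound $\|\tau\|=1$ via $|\tau(a)-\tau_0(a_0)|\le\|a-a_0\|+|\tau(a_0)-\tau_0(a_0)|$, whose two terms are controlled by norm convergence $a\to a_0$ and by weak$^*$ convergence $\tau\to\tau_0$ at the fixed point $a_0$. Since each $b:X\to\mathcal{A}$ is norm-continuous, $(x,\tau)\mapsto\tau(b(x))$ is continuous for every $b\in\mathcal{B}$, and as the weak$^*$ topology on $S(\mathcal{B})$ is the initial topology from these evaluations, $J$ is continuous. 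I then define $(x,\tau)\sim(x',\tau')$ to mean $J(x,\tau)=J(x',\tau')$, i.e.\ $\tau(b(x))=\tau'(b(x'))$ for all $b\in\mathcal{B}$. This is the kernel equivalence relation of the continuous map $J$ into the Hausdorff space $S(\mathcal{B})$, so $\sim=(J\times J)^{-1}(\Delta)$ is closed; equivalently it is the intersection over $b\in\mathcal{B}$ of the closed sets cut out by $\tau(b(x))=\tau'(b(x'))$. Since $\mathcal{A}$ is unital, $S(\mathcal{A})$ is weak$^*$-compact, hence $X\times S(\mathcal{A})$ is compact Hausdorff, and a quotient of a compact Hausdorff space by a closed equivalence relation is again compact Hausdorff; thus $X\times S(\mathcal{A})/{\sim}$ is a compact Hausdorff space and $C(X\times S(\mathcal{A})/{\sim})$ is well defined.

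Finally I would obtain $\phi$ by descending $J$ to the quotient: set $\phi(b)([x,\tau]):=\tau(b(x))$. This is well defined precisely because $\sim$ identifies exactly those points on which all such functionals agree, and it is continuous on the quotient because its pullback $(x,\tau)\mapsto\tau(b(x))$ to $X\times S(\mathcal{A})$ is continuous, by the universal property of the quotient topology. The map $\phi:\mathcal{B}\to C(X\times S(\mathcal{A})/{\sim})$ is linear, unital and contractive, since $\|\phi(b)\|_\infty=\sup_{x,\tau}|\tau(b(x))|\le\sup_x\|b(x)\|=\|b\|$, so in particular it is continuous. For injectivity, if $\phi(b)=0$ then $\tau(b(x))=0$ for all $x\in X$ and $\tau\in S(\mathcal{A})$, and since states separate points of $\mathcal{A}$ this forces $b(x)=0$ for every $x$, i.e.\ $b=0$. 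The duality identity $\phi(b)([x,\tau])=J(x,\tau)(b)$ holds by construction.

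The only genuinely delicate points are the two topological facts underpinning the target algebra: the joint continuity of the state pairing (which simultaneously yields continuity of $J$ and closedness of $\sim$) and the standard but nontrivial fact that the quotient of a compact Hausdorff space by a closed equivalence relation remains Hausdorff. Everything else is a routine verification. I would emphasize that $\phi$ is deliberately only a linear embedding and not multiplicative, since $\tau$ is not a homomorphism; this is exactly why $\mathcal{B}$ is recovered here as a subspace of $C(X\times S(\mathcal{A})/{\sim})$ rather than as a $C^*$-subalgebra.
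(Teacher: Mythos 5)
Your proposal is correct and follows essentially the same route as the paper: you define the same evaluation pairing $J(x,\tau)(b)=\tau(b(x))$, the same kernel equivalence relation, and obtain $\phi$ by descending to the quotient, merely filling in the continuity, closedness, and injectivity verifications that the paper leaves as "straightforward." Your closing remark that $\phi$ is only a linear embedding and not multiplicative matches the remark the paper itself makes immediately after the proposition.
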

\begin{proof}
Given any $(x,\tau), (y,\rho)\in X\times S(\mathcal{A})$, we
define $(x,\tau)\sim (y,\rho)$ iff $\tau(a(x))=\rho(a(y))$ for all $a\in \mathcal{B}$. Viewing $(x,\tau)\in X\times S(\mathcal{A})$ as $\delta_x\otimes\tau\in S(C(X,\mathcal{A}))=S(C(X)\otimes_{min}\mathcal{A})$, this equivalence relation just corresponds to having $(x,\tau)\sim (y,\rho)$ if and only if the two states agree on $\mathcal{B}$. From here, it is clear that this equivalence relation is closed. Moreover, $X\times S(\mathcal{A})/{\sim}$ can be canonically viewed as a subset of $S(\mathcal{B})$ as the image of $X\times S(\mathcal{A})$ under the restriction map $J: S(C(X,\mathcal{A}))\to S(\mathcal{B})$. It is also clear that, viewing $X\times S(\mathcal{A})/{\sim}$ as a subset of $\mathcal{B}$, the composition of the canonical maps $$\phi: \mathcal{B}\to C(S(\mathcal{B})\to C\left(X\times S(\mathcal{A})/{\sim}\right)$$ is dual to the map $J$ in the sense mentioned above. The injectivity of this map is likewise straightforward to verify.
\end{proof}
\begin{remark}
  The map $\phi$ constructed in the above proof may neither be surjective nor an algebraic homomorphism.
\end{remark}
If we restrict ourselves to abelian $C^*$-algebras equipped with $G$-actions, it is well-known that every invariant subalgebra comes from an equivalence relation on $X$. We decide to include the proof for completeness.
\begin{cor}\label{cor: subalgebras in C(X) comes from an equivalence relation}
Let $G\curvearrowright X$ be continuous action on a compact Hausdorff space $X$ and $\mathcal{B}\subset C(X)$ be a unital $G$-invariant $C^*$-subalgebra. Then there exists a $G$-invariant closed equivalence relation $\sim$ on $X$ such that $\mathcal{B}=C(X/{\sim})$.
\end{cor}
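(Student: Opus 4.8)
The plan is to obtain $\sim$ by specializing the equivalence relation of Proposition~\ref{prop: locating subalgebras in C(X, A)} to the case $\mathcal{A}=\mathbb{C}$, and then to upgrade the resulting injection to an isomorphism via the Stone--Weierstrass theorem. Since the only state on $\mathbb{C}$ is the identity, $S(\mathbb{C})$ is a single point and $X\times S(\mathbb{C})$ is canonically identified with $X$. Under this identification the relation supplied by the proposition reads
\[
x\sim y \quad\Longleftrightarrow\quad a(x)=a(y)\ \text{for all}\ a\in\mathcal{B},
\]
which is closed in $X\times X$, being the intersection over $a\in\mathcal{B}$ of the closed sets $\{(x,y):a(x)=a(y)\}$.

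First I would check that $\sim$ is $G$-invariant. Writing the action on functions as $(g\cdot a)(x)=a(g^{-1}x)$, $G$-invariance of $\mathcal{B}$ means $g^{-1}\cdot a\in\mathcal{B}$ whenever $a\in\mathcal{B}$. Hence, if $x\sim y$, then for every $a\in\mathcal{B}$ we have
\[
a(gx)=(g^{-1}\cdot a)(x)=(g^{-1}\cdot a)(y)=a(gy),
\]
so $gx\sim gy$; thus $\sim$ is $G$-invariant.

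Next I would identify $\mathcal{B}$ with $C(X/{\sim})$. Because $\sim$ is a closed equivalence relation on the compact Hausdorff space $X$, the quotient $X/{\sim}$ is again compact Hausdorff and the quotient map $\pi\colon X\to X/{\sim}$ is continuous. By the very definition of $\sim$, each $a\in\mathcal{B}$ is constant on $\sim$-classes and so factors uniquely as $a=\bar a\circ\pi$ with $\bar a\in C(X/{\sim})$; in this abelian setting the map $a\mapsto\bar a$ is precisely $\phi$ and is manifestly a unital $*$-homomorphism onto a subalgebra $\bar{\mathcal{B}}\subseteq C(X/{\sim})$. This $\bar{\mathcal{B}}$ is closed (being the isometric image of the complete algebra $\mathcal{B}$), self-adjoint (since $\mathcal{B}$ is a $C^*$-subalgebra), contains the constants (since $\mathcal{B}$ is unital), and separates the points of $X/{\sim}$ (precisely because distinct classes $[x]\neq[y]$ are, by the definition of $\sim$, separated by some $a\in\mathcal{B}$). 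The complex Stone--Weierstrass theorem then yields $\bar{\mathcal{B}}=C(X/{\sim})$, so $\phi$ is an isomorphism and $\mathcal{B}=C(X/{\sim})$, as claimed.

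I expect no serious obstacle here, since the content is essentially bookkeeping once Proposition~\ref{prop: locating subalgebras in C(X, A)} is in hand. The one point deserving care is the assertion that the quotient of a compact Hausdorff space by a closed equivalence relation is again Hausdorff (equivalently, that $\pi$ is a closed map); this standard general-topology fact is what makes $C(X/{\sim})$ meaningful and lets Stone--Weierstrass apply.
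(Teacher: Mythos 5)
Your proposal is correct and follows essentially the same route as the paper: specialize Proposition~\ref{prop: locating subalgebras in C(X, A)} to $\mathcal{A}=\mathbb{C}$, observe that $\phi$ becomes an injective unital $*$-homomorphism, and conclude surjectivity by Stone--Weierstrass since $\mathcal{B}$ separates points of $X/{\sim}$ by construction. Your additional care about $G$-invariance of $\sim$ and the Hausdorffness of the quotient is a welcome but minor elaboration of what the paper leaves implicit.
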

\begin{proof}

Let $\mathcal{A}=\mathbb{C}$. Then $S(\mathcal{A})=\{Id\}$, where $Id: \mathbb{C}\rightarrow \mathbb{C}$ denotes the identity map. Hence we may simply identify $X\times S(\mathcal{A})$ with $X$. Then it is not hard to see that the map $\phi: \mathcal{B}\rightarrow C(X/{\sim})$ constructed in Proposition \ref{prop: locating subalgebras in C(X, A)} is an injective $*$-homomorphism. In other words, we can identify $\mathcal{B}$ as a subalgebra in $C(X/{\sim})$ via $\phi$. It suffices to show $\phi$ is surjective.
This is clear since the definition of $\sim$ shows $\mathcal{B}$ separates points in $X/{\sim}$. Therefore, the Stone-Weierstrass theorem shows that $\mathcal{B}=C(X/{\sim})$.

Clearly $\mathcal{B}$ being a $G$-invariant entails that $\sim$ is $G$-invariant, i.e., $x\sim y$ iff $sx\sim sy$ for all $x, y\in X$ and all $s\in G$.
\end{proof}
Given an intermediate subalgebra $Q$, we now formulate an equivalent condition for the splitting of $Q$ when the ambient tensor product is of the form $C(X)\otimes_{\text{min}} C(Y)$.
\begin{proposition}\label{prop: equivalence relation formulation on splitting}
Let $G\curvearrowright X$ and $G\curvearrowright Y$ be two continuous actions on compact Hausdorff spaces $X$ and $Y$. Let $Q$ be an intermediate $G$-$C^*$-subalgebra with $C(Y)\subset Q\subset C(X)\otimes_{\text{min}} C(Y)$.

Then the following are equivalent:

(1) $Q$ is of the form $Q=C(Z)\otimes_{\text{min}} C(Y)$ for some factor $G\curvearrowright Z$ of the action $G\curvearrowright X$.

(2) Given any $x, x'\in X$, if there exists some $y_0\in Y$ such that $q(x, y_0)=q(x', y_0)$ for all $q\in Q$, then $q(x, y)=q(x', y)$ for all $y\in Y$ and all $q\in Q$.
\end{proposition}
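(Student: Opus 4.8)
The plan is to reduce everything to the equivalence-relation description of invariant subalgebras. Since $C(X)\otimes_{\mathrm{min}} C(Y)=C(X\times Y)$ and $Q$ is a $G$-invariant unital $C^*$-subalgebra of it, Corollary~\ref{cor: subalgebras in C(X) comes from an equivalence relation} (applied to the $G$-space $X\times Y$) furnishes a closed $G$-invariant equivalence relation $R$ on $X\times Y$ with $Q=C\big((X\times Y)/R\big)$; concretely, $(x,y)\mathbin{R}(x',y')$ iff $q(x,y)=q(x',y')$ for every $q\in Q$. The whole argument is then an analysis of $R$. The first thing I would record is the constraint coming from $C(Y)\subset Q$: the functions depending only on the second variable already separate the points of $Y$, so any two $R$-related points must share the same $Y$-coordinate, i.e. $(x,y)\mathbin{R}(x',y')$ forces $y=y'$. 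Thus $R$ is fibered over $Y$, and for each $y$ it restricts to a relation $\sim_y$ on $X$ given by $x\sim_y x'$ iff $q(x,y)=q(x',y)$ for all $q\in Q$. In this language condition (2) says precisely that the family $\{\sim_y\}_{y\in Y}$ is independent of $y$: related in one fibre means related in every fibre.

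For (2)$\Rightarrow$(1): assuming (2), I set $x\approx x'$ iff $q(x,y)=q(x',y)$ for all $q\in Q$ and all $y\in Y$ (equivalently, by (2), for a single $y$). This $\approx$ is an intersection of closed conditions, hence a closed equivalence relation, and it is $G$-invariant because $Q$ is: if $x\approx x'$ and $s\in G$, then for $q\in Q$ one has $q(sx,y)=(s^{-1}\cdot q)(x,s^{-1}y)$ with $s^{-1}\cdot q\in Q$, which equals $q(sx',y)$. Hence $Z:=X/{\approx}$ is a $G$-factor of $X$ with factor map $\pi$. By the fibered description together with (2), the $R$-class of $(x,y)$ is exactly $\{(x',y):\pi(x')=\pi(x)\}$, so $R$ coincides with the fibres of $\pi\times\mathrm{id}_Y$. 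Therefore $Q=C\big((X\times Y)/R\big)=C(Z\times Y)=C(Z)\otimes_{\mathrm{min}} C(Y)$, which is exactly (1).

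For (1)$\Rightarrow$(2): I write $Q=C(Z)\otimes_{\mathrm{min}} C(Y)=C(Z\times Y)$ with factor map $\pi:X\to Z$, so every $q\in Q$ has the form $q(x,y)=F(\pi(x),y)$ for some $F\in C(Z\times Y)$. Suppose $q(x,y_0)=q(x',y_0)$ for all $q\in Q$ and some fixed $y_0$. Testing this against the functions $q(x,y)=g(\pi(x))$ with $g\in C(Z)$, which lie in $Q$, gives $g(\pi(x))=g(\pi(x'))$ for all $g\in C(Z)$; since $C(Z)$ separates points of $Z$ this forces $\pi(x)=\pi(x')$. But then $q(x,y)=F(\pi(x),y)=F(\pi(x'),y)=q(x',y)$ for every $y\in Y$ and every $q\in Q$, which is (2).

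I expect the only genuinely technical point to be the identification $Q=C(Z\times Y)$ in the first implication, which comes down to matching the $R$-classes with the fibres of $\pi\times\mathrm{id}_Y$. Once that is done the equality of the two algebras of continuous functions is automatic, both being the functions constant on these common classes, and the underlying topological fact — that the continuous bijection $(X\times Y)/R\to Z\times Y$ is a homeomorphism — is free here since all spaces are compact Hausdorff and $R$ is closed. The main place to be careful is verifying that $C(Y)\subset Q$ really forces $R$ to respect the $Y$-coordinate and that condition (2) is the exact translation of ``$\{\sim_y\}$ is constant in $y$''; everything else is routine.
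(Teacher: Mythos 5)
Your proof is correct and follows essentially the same route as the paper: both reduce to the equivalence-relation description of $Q$ via Corollary~\ref{cor: subalgebras in C(X) comes from an equivalence relation}, use $C(Y)\subset Q$ to see that the relation is fibered over $Y$, and observe that condition (2) says exactly that the fiberwise relations on $X$ do not depend on $y$, so that the quotient is $Z\times Y$. Your $\approx$ is the paper's $\sim'$, and your identification of the $R$-classes with the fibres of $\pi\times\mathrm{id}_Y$ is the paper's observation that the two natural maps $X/{\sim'}\times Y\to (X\times Y)/{\sim_Q}\to X/{\sim}\times Y$ compose to the identity.
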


\begin{proof}
$(1)\Rightarrow (2)$: Take any $f\in C(Z)$ and set $q:=f\otimes 1_Y\in Q$. From $q(x, y_0)=q(x', y_0)$, we deduce that $f(x)=f(x')$ for all $f\in C(Z)$. Hence, $q(x, y)=q(x', y)$ for all $q\in Q$ by approximating $q$ using algebraic tensor products.

$(2)\Rightarrow (1)$:  Let $\sim_Q$ be the closed equivalence relation on $X\times Y$ defined by $Q$, i.e., $(x, y)\sim_Q(x', y')$ iff $q(x, y)=q(x', y')$ for all $q\in Q$. Note that we may identify $Q$ with $C(\frac{X\times Y}{\sim_Q})$ naturally by Corollary \ref{cor: subalgebras in C(X) comes from an equivalence relation}.

To show that (1) holds, it suffices to observe that we have the following two well-defined maps whose composition is the identity:
\begin{align*}
X/{\sim'}\times Y\rightarrow \frac{X\times Y}{\sim_Q}\rightarrow X/{\sim}\times Y.\\
([x]_{\sim'}, y)\mapsto [(x, y)]_Q\mapsto ([x]_{\sim}, y).
\end{align*}
Here, we define $x\sim x'$ iff there exists some $y\in Y$ such that $q(x, y)=q(x', y)$ for all $q\in Q$; similarly, we define $x\sim' x'$ iff $q(x, y)=q(x',y)$ for all $y\in Y$ and all $q\in Q$. Note that to check the second above map is well-defined, we need to use the assumption $C(Y)\subset Q$ to get that whenever $[(x, y)]_Q=[(x', y')]_Q$, then we have $y=y'$.

Note that condition (2) tells us that $\sim=\sim'$ and hence the composition of the above is the identity.
Finally, set $Z=X/{\sim}=X/{\sim'}$. Then $Q=C(Z)\otimes_{\text{min}} C(X)$.
\end{proof}
The following is an easy observation which we record as a corollary.
\begin{cor}
\label{cor:splitting}
    Let $G\curvearrowright Y$ and $G\curvearrowright X$ be continuous actions on compact Hausdorff spaces. Assume that $ H\leq G$ is a subgroup such that the restriction of the action $G\curvearrowright Y$ to $H$ is minimal and $H\curvearrowright X$ is trivial. Then, any intermediate $G$-$C^*$-subalgebra between $C(Y)$ and $C(X)\otimes_{\text{min}} C(Y)$ splits as a tensor product.
\end{cor}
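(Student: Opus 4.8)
The plan is to deduce the statement directly from Proposition~\ref{prop: equivalence relation formulation on splitting}. Since we are exactly in the situation $C(Y)\subset Q\subset C(X)\otimes_{\text{min}} C(Y)$, it suffices to verify condition (2) of that proposition; this will then yield that $Q=C(Z)\otimes_{\text{min}} C(Y)$ for some $G$-factor $Z$ of $X$, which is precisely the desired splitting.

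To verify (2), I would start with $x,x'\in X$ and a point $y_0\in Y$ such that $q(x,y_0)=q(x',y_0)$ for all $q\in Q$, and aim to upgrade this single-point coincidence to all of $Y$. The key observation is that because $H$ acts trivially on $X$, under the identification $C(X)\otimes_{\text{min}} C(Y)=C(X\times Y)$ the diagonal action of any $h\in H$ moves only the $Y$-coordinate, i.e.\ $(h\cdot q)(x,y)=q(x,h^{-1}y)$ for every $x$. Since $Q$ is $G$-invariant and $H\le G$, we have $h\cdot q\in Q$ for all $q\in Q$ and $h\in H$. Applying the hypothesis to $h\cdot q$ in place of $q$ and evaluating at $(x,y_0)$ and $(x',y_0)$ then gives $q(x,h^{-1}y_0)=q(x',h^{-1}y_0)$ for every $h\in H$.

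Next I would invoke minimality of the restricted action $H\curvearrowright Y$: the orbit $Hy_0=\{h^{-1}y_0:h\in H\}$ is dense in $Y$. Thus the two continuous functions $y\mapsto q(x,y)$ and $y\mapsto q(x',y)$ agree on a dense subset of $Y$, and hence everywhere, giving $q(x,y)=q(x',y)$ for all $y\in Y$ and all $q\in Q$. This is exactly condition (2), so the conclusion follows.

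The argument is short, and its only genuine content is the bookkeeping of the action: the crucial step is recognizing that triviality of $H\curvearrowright X$ is precisely what allows the $H$-translates of the fibrewise coincidence at $(x,y_0)$ and $(x',y_0)$ to propagate along the $H$-orbit of $y_0$ in the second coordinate while keeping the first coordinate fixed. The main points to be careful about are therefore the convention for the diagonal action and the fact that we only need $H$-invariance (not full $G$-invariance) together with the density-plus-continuity step; I do not anticipate any genuine obstacle beyond these routine verifications.
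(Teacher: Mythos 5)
Your argument is correct and is essentially identical to the paper's proof: both reduce to condition (2) of Proposition~\ref{prop: equivalence relation formulation on splitting}, use triviality of $H\curvearrowright X$ so that $H$-translates of elements of $Q$ move only the $Y$-coordinate, and then propagate the coincidence at $y_0$ along the dense $H$-orbit by continuity (the paper phrases this as taking a net $s_n y_0\to y$ and passing to the limit, which is the same density-plus-continuity step). Your side remark that only $H$-invariance of $Q$ is used in this verification is accurate, though $G$-invariance is still needed to conclude that the resulting factor $Z$ is a $G$-factor.
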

\begin{proof}
By Proposition~\ref{prop:
 equivalence relation formulation on splitting}, we need to check that for any $x, x'\in X$, if for some $y_0\in Y$, we have $q(x, y_0)=q(x', y_0)$ for all $q\in Q$, then $q(x, y)=q(x', y)$ for all $y\in Y$. By our assumption, take $s_n\in H$ such that $s_ny_0\rightarrow y$. Note that $s_n^{-1}q\in Q$ as $Q$ is $G$-invariant. Then $q(x, y)=\lim_nq(s_nx, s_ny_0)=\lim_n(s_n^{-1}q)(x, y_0)=\lim_n(s_n^{-1}q)(x', y_0)=\lim_nq(s_nx', s_ny_0)=q(x', y)$ for all $q\in Q$.
\end{proof}
Given an invariant $C^*$-subalgebras $Q$ of the form
\[C(X)\subset Q\subset C(X)\otimes_{\text{min}}\mathcal{A},\]
the question of whether $Q$ splits as a tensor product can be translated to an IFT problem by looking at $S(\mathcal{A})$. The following theorem allows us to bridge the commutative and the non-commutative side.
\begin{thm}
\label{thm:bridgecnc}
Let $\mathcal{A}$ be a unital $G$-$C^*$-algebra and $Y$, a $G$-space. 
Let $Q$ be an intermediate $G$-$C^*$-subalgebra of the form
\[C(Y)\subset Q\subset C(Y)\otimes_{\text{min}}\mathcal{A}.\] 
Assume that every intermediate $G$-factor $Z$ with 
\[Y\times S(\mathcal{A})\to Z\to Y\]
is of the form $Y\times K$ for some $G$-factor $K$ of $S(\mathcal{A})$. Then, $Q$ is a tensor product. 
\end{thm}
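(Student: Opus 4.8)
The plan is to locate $Q$ inside $C\!\left(Y\times S(\mathcal{A})/{\sim}\right)$ via Proposition~\ref{prop: locating subalgebras in C(X, A)}, check that the resulting quotient is an intermediate $G$-factor of $Y\times S(\mathcal{A})$ over $Y$, feed it into the hypothesis, and then read off that the fibres $Q_y:=\{q(y)\mid q\in Q\}$ supplied by Lemma~\ref{lem:smallparts} do not depend on $y$. Once that is done, Lemma~\ref{lem:smallparts} immediately produces the splitting.

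First I would apply Proposition~\ref{prop: locating subalgebras in C(X, A)} (with $X=Y$) to $Q\subset C(Y,\mathcal{A})$ to obtain the closed equivalence relation $\sim$ on $Y\times S(\mathcal{A})$ given by $(y,\tau)\sim(y',\tau')$ iff $\tau(q(y))=\tau'(q(y'))$ for all $q\in Q$, and set $Z:=(Y\times S(\mathcal{A}))/{\sim}$. Since $Q$ is $G$-invariant and the diagonal action satisfies $(g^{-1}\cdot q)(y)=g^{-1}\cdot\left(q(gy)\right)$, a direct substitution shows $\sim$ is $G$-invariant, so $Z$ is a compact $G$-space and the quotient $Y\times S(\mathcal{A})\to Z$ is a $G$-factor map. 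Because $C(Y)\subset Q$, for $f\in C(Y)$ we have $\tau(f(y)1)=f(y)$, so $(y,\tau)\sim(y',\tau')$ forces $f(y)=f(y')$ for all $f\in C(Y)$ and hence $y=y'$; therefore $[(y,\tau)]\mapsto y$ is a well-defined $G$-factor map $Z\to Y$ whose composition with the quotient map is the projection onto $Y$. Thus $Z$ is an intermediate $G$-factor $Y\times S(\mathcal{A})\to Z\to Y$, and the hypothesis applies.

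Invoking the hypothesis, $Z$ is of the form $Y\times K$ for a $G$-factor $\kappa:S(\mathcal{A})\to K$; read as a factor \emph{over} $Y$ this means the quotient map is $\mathrm{id}_Y\times\kappa$, so that $(y,\tau)\sim(y,\tau')$ iff $\kappa(\tau)=\kappa(\tau')$, a condition independent of $y$. Unwinding the definition of $\sim$, the relation $(y,\tau)\sim(y,\tau')$ is exactly $\tau|_{Q_y}=\tau'|_{Q_y}$; hence $R_y:=\{(\tau,\tau')\mid \tau|_{Q_y}=\tau'|_{Q_y}\}$ equals $\{(\tau,\tau')\mid \kappa(\tau)=\kappa(\tau')\}$ for every $y$, and in particular $R_y$ is independent of $y$.

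The remaining, and main, step is to recover the subalgebra $Q_y$ from the relation $R_y$, which will then force $Q_y$ to be independent of $y$. Here I would use Kadison's function representation: for a unital $C^*$-subalgebra $\mathcal{B}\subset\mathcal{A}$ I claim $\mathcal{B}_{sa}=\{a\in\mathcal{A}_{sa}\mid \hat a \text{ is constant on the classes of } R_\mathcal{B}\}$, where $\hat a(\tau)=\tau(a)$ and $R_\mathcal{B}=\{(\tau,\tau')\mid\tau|_\mathcal{B}=\tau'|_\mathcal{B}\}$. The inclusion $\subseteq$ is immediate; for $\supseteq$, such an $\hat a$ factors as $g\circ r$ through the restriction map $r:S(\mathcal{A})\to S(\mathcal{B})$, which is a closed (hence quotient) surjection, and a lifting argument shows $g$ is a continuous affine function on $S(\mathcal{B})$, so by Kadison $g=\hat b$ for some $b\in\mathcal{B}_{sa}$, whence $\hat a=\hat b$ on $S(\mathcal{A})$ and $a=b\in\mathcal{B}$ since states separate points. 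Applying this with $\mathcal{B}=Q_y$ and using that $R_{Q_y}$ is independent of $y$ yields that $(Q_y)_{sa}$, and hence $Q_y$, is independent of $y$; call the common value $\mathcal{B}$. Finally Lemma~\ref{lem:smallparts} gives $Q=\int_{y\in Y}Q_y=\{f\in C(Y,\mathcal{A})\mid f(y)\in\mathcal{B}\}=C(Y)\otimes_{\text{min}}\mathcal{B}$, and since $(g\cdot Q)_y=g\cdot Q_{g^{-1}y}$ the $G$-invariance of $Q$ forces $g\cdot\mathcal{B}=\mathcal{B}$, so $\mathcal{B}$ is a $G$-invariant subalgebra and $Q$ splits. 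I expect the main obstacle to be exactly this reconstruction of $Q_y$ from its state-agreement relation via Kadison's theorem, together with committing to the correct reading of the hypothesis as a splitting of the factor map over $Y$ rather than a mere homeomorphism $Z\cong Y\times K$.
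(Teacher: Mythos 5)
Your proposal is correct, and its skeleton coincides with the paper's: build the intermediate $G$-factor $Z=(Y\times S(\mathcal{A}))/{\sim}$ from $Q$ (the paper does this via the restriction map $J:Y\times S(\mathcal{A})\to S(Q)$, which is the dual picture of your quotient), feed it to the hypothesis to conclude that the relation $\tau|_{Q_y}=\tau'|_{Q_y}$ on $S(\mathcal{A})$ is independent of $y$, deduce that $Q_y$ itself is independent of $y$, and finish with Lemma~\ref{lem:smallparts}. The one genuinely different ingredient is the recovery of $Q_y$ from its state-agreement relation. The paper argues by contradiction: given $a\in Q_y\setminus Q_{y'}$ it takes a Hahn--Banach functional $\omega$ vanishing on $Q_{y'}$ with $\omega(a)\neq 0$, decomposes $\omega$ as a combination of four states, and uses the identity $K=S(\mathcal{A})/{\sim}$ to transport the vanishing from $Q_{y'}$ to $Q_y$. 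You instead reconstruct the subalgebra directly via Kadison's function representation: a self-adjoint $a$ whose Gelfand transform is constant on the fibres of $r:S(\mathcal{A})\to S(Q_y)$ descends to a continuous affine function on $S(Q_y)$ (using that $r$ is a closed, hence quotient, map, and that affineness can be checked on lifts), and is therefore of the form $\hat b$ for $b\in (Q_y)_{sa}$, forcing $a=b\in Q_y$. Both arguments are sound; the paper's is more elementary (only Hahn--Banach and the Jordan-type decomposition of a functional into states), while yours isolates the cleaner structural statement that a unital $C^*$-subalgebra is determined by the partition of the state space it induces, at the cost of invoking Kadison's representation theorem. Your explicit remarks on reading the hypothesis as a splitting of the factor map over $Y$ (not a bare homeomorphism) and on extracting $G$-invariance of $\mathcal{B}$ from $(gQ)_y=\sigma_g(Q_{g^{-1}y})$ are both points the paper uses implicitly, so it is good that you flagged them.
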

\begin{proof}
Let us identify $C(Y)\otimes_{\text{min}}\mathcal{A}$ with $C(Y,\mathcal{A})$.
Now, given $G$-invariant intermediate $G$-$C^*$-algebra $Q$ with $C(Y)\subset Q\subset C(Y)\otimes_{\text{min}} \mathcal{A}$, we construct an intermediate factor. Let $J: Y\times S(\mathcal{A})\to S(Q)$ be the restriction map defined by $$J(y,\tau)(q):=\tau(q(y)).$$ Note that the restriction of states gives us a composition map
$\pi: Y\times S(\mathcal{A})\overset{J}{\rightarrow} S(Q)\overset{\text{Res}}{\rightarrow} S(C(Y))$. We claim that $$J(Y\times S(\mathcal{A}))=\left\{\delta_y\times\varphi|_{Q_y}:y\in Y, \varphi\in S(\mathcal{A})\right\}.$$ 
If $\tau\in S(Q)$ is such that $\tau\in J(Y\times S(\mathcal{A}))$, then $\tau=J(y,\eta)$ for some $\eta\in S(\mathcal{A})$ and $y\in Y$. Then, 
\[\tau(q)=J(y,\eta)(q)=\eta(q(y))=\eta|_{Q_y}(q(y))=\delta_y\times\eta|_{Q_y}(q),~q\in Q.\]
Hence $\tau=\delta_y\times \eta|_{Q_y}$.
Then it is clear that $$\text{Res}\left(J(Y\times S(\mathcal{A}))\right)=\left\{\delta_y: y\in Y\right\}.$$
Consequently, it follows that $\pi$ is a map onto $Y$, i.e.,
\[\pi: Y\times S(\mathcal{A})\overset{J}{\rightarrow} J(Y\times S(\mathcal{A}))\overset{\text{Res}}{\rightarrow}Y.\]
From our assumption, we know that $J(Y\times S(\mathcal{A}))=Y\times K$ for some $G$-factor $K$ of $S(\mathcal{A})$.
We now claim that $\text{Res}^{-1}(y)=\delta_y\times S(Q_y)$ for all $y\in Y$. Indeed, if $\delta_{y'}\times\varphi|_{Q_{y'}}\in \text{Res}^{-1}(y)$, then we see that
\begin{align*}
&\delta_{y'}\times\varphi|_{Q_{y'}}(f)=f(y)~\forall f\in C(Y)\\&\iff \varphi|_{Q_{y'}}(f(y'))=f(y)~\forall f\in C(Y)\\&\iff f(y')=f(y)~\forall f\in C(Y)\iff y'=y.    
\end{align*}
Since $S(Q_y)=S(\mathcal{A})|_{Q_y}$ (one side of the inclusion follows from the Hahn-Banach extension theorem, and the other side is evident from the definition of the $\text{Res}$ map), the claim follows. 
Therefore, $S(Q_y)=K$ for all $y\in Y$. 
We now show that $Q_y=Q_{y'}$ for all $y,y'\in Y$ and then apply Lemma \ref{lem:smallparts} to finish the proof. 

Write $K=\frac{S(\mathcal{A})}{\sim}$ for some $G$-invariant closed equivalence relation on $S(\mathcal{A})$. Fix any $y\in Y$. Note that $S(Q_y)=K=\frac{S(\mathcal{A})}{\sim}$ means for any $\phi\in S(\mathcal{A})$, we have $\phi|_{Q_y}=[\phi]$, the equivalence class of $\phi$ under $\sim$.
Then we check that the following claim holds true:

Claim: For any $\phi,\psi\in S(\mathcal{A})$, $\phi\sim\psi$ $\Leftrightarrow$ $\phi(q)=\psi(q)$ for all $q\in Q_y$. 

For the $\Rightarrow$ direction, we have $[\phi]=[\psi]:=[\Phi]$, thus $\phi|_{Q_y}=\Phi|_{Q_y}=\psi|_{Q_y}$, hence $\phi(q)=\psi(q)$ for all $q\in Q_y$. For the $\Leftarrow$ direction, note that $\phi|_{Q_y}=\psi|_{Q_y}$, thus $[\phi]=\phi|_{Q_y}=\psi|_{Q_y}=[\psi]$, hence $\phi\sim \psi$.

Finally, let us show $Q_y=Q_{y'}$ for any $y,y'\in Y$. Assuming the equality does not hold, then without loss of generality, we may assume there is some $a\in Q_y\setminus Q_{y'}$. Using the Hahn-Banach extension theorem, we can find a bounded linear functional $\omega$ on $\mathcal{A}$ such that $\omega|_{Q_{y'}}=0$ and $\omega(a)\ne 0$. Write $\omega=c_1\omega_1-c_2\omega_2+ic_3\omega_3-ic_4\omega_4$, where $\omega_i\in S(\mathcal{A})$ and $c_i\in\mathbb{R}$ for each $i=1,2,3,4$. Since $\omega(q)=0$ for all $q\in Q_{y'}$, it follows that $c_1=c_2$, $c_3=c_4$ and also $\omega_1(q)=\omega_2(q)$ (if $c_1, c_2\neq 0$), $\omega_3(q)=\omega_4(q)$ (if $c_3, c_4\neq 0$) for all $q\in Q_{y'}$. This shows that $\omega_1\sim\omega_2$ (if $c_1,c_2\ne0$) and $\omega_3\sim \omega_4$ (if $c_3,c_4\ne0$) by the above claim for $y'$. Once again, the above claim (for $y$) shows that $\omega_1(q)=\omega_2(q)$ (if $c_1,c_2\ne0$), $\omega_3(q)=\omega_4(q)$ (if $c_3,c_4\ne0$)  for all $a\in Q_y$ and thus $\omega(q)=0$ for all $q\in Q_y$. But this contradicts with $\omega(a)\neq 0$.
\end{proof}
\subsection{Strongly proximal, boundary Actions}
Consider a $G$-space $X$, i.e., $X$ is a compact Hausdorff space over which $G$ acts via homeomorphisms.
The action $G \curvearrowright X$ is called a strongly proximal action if for any $\nu \in \text{Prob}(X)$, $$\left\{\delta_x: x\in X\right\}\cap\overline{G\nu}^{\text{weak}^*}\neq \emptyset.$$ If $G\curvearrowright X$ is a minimal strongly proximal action, then we say it is a boundary action or $X$ is a $G$-boundary.
Clearly, $X$ is a $G$-boundary if for any $\nu\in \text{Prob}(X)$,
$$\left\{\delta_x: x\in X\right\}\subset\overline{G\nu}^{\text{weak}^*}.$$
By $\partial_FG$, we denote the Furstenberg boundary of the group $G$. It is the maximal $G$-boundary, in that any other $G$-boundary $Y$ is a $G$-factor of $\partial_FG$. Very recently, this concept has been instrumental in analyzing properties of groups. For instance, Kalantar and Kennedy \cite{kalantar_kennedy_boundaries} provided a dynamical description of $C^*$-simplicity through the interaction of $G\curvearrowright\partial_FG$ on the Furstenberg boundary $\partial_FG$.

We exploit the boundary actions to give various examples of a tensor product splitting theorem. We end this discussion with the following class of groups for which an intermediate splitting theorem holds. This also illustrates our point of view of pairing the boundary action with that of a minimal action.
\begin{example}
Let $G=\Gamma_1\times \Gamma_2$. Assume that $\Gamma_2$ is amenable and $\Gamma_1$ is nonamenable with a trivial amenable radical. Let $X$ be a minimal $\Gamma_2$-space. Denote by $\partial_FG$ the Furstenberg boundary associated with $G$. By \cite[Corollary 8]{furman_kernel}, we see that $\Gamma_2$ is contained inside $\text{Ker}(G\curvearrowright\partial_FG)$. Since any $G$-boundary $\partial G$ is a $G$-factor of $\partial_FG$, it follows that $\Gamma_2$ acts trivially on $\partial G$. It now is a consequence of Corollary~\ref{cor:splitting} that every intermediate $G$-$C^*$-subalgebra $\mathcal{C}$ of the form
\[C(X)\subset\mathcal{C}\subset C(X)\otimes_{\text{min}} C(\partial G)\]
splits.
\end{example}
\subsection{Property (S)}
\label{subsec:propertyS}
We also briefly recall the notion of Property~(S) before proceeding further. Let $\mathcal{A}$ and $\mathcal{B}$ be unital $C^*$-algebras. For a state $\psi$ on $\mathcal{B}$, we can associate the conditional expectation $L_{\psi} : \mathcal{A}\otimes_{\text{min}}\mathcal{B}\to \mathcal{A}$ defined by $a\otimes b\mapsto a\psi(b)$. Similarly, for a state $\varphi$ on $\mathcal{A}$, we can associate a conditional expectation $R_{\varphi}:\mathcal{A}\otimes_{\text{min}}\mathcal{B}\to \mathcal{B}$ defined by $a\otimes b\mapsto \varphi(a)b$.  Given unital $C^*$-subalgebras $\tilde{\mathcal{A}}\subset\mathcal{A}$ and  $\tilde{\mathcal{B}}\subset\mathcal{B}$, the set
\begin{align*}&F\left(\tilde{\mathcal{A}}, \tilde{\mathcal{B}}, \mathcal{A}\otimes_{\text{min}}\mathcal{B}\right)\\&:=\left\{x\in\mathcal{A}\otimes_{\text{min}}\mathcal{B}: R_{\varphi}(x)\in \tilde{\mathcal{B}}, L_{\psi}(x)\in\tilde{\mathcal{A}}
~\forall \varphi\in S(\mathcal{A}), \psi \in S(\mathcal{B})\right\}\end{align*}
is called the Fubini product of $\tilde{\mathcal{A}}$ and $\tilde{\mathcal{B}}$.  $\mathcal{A}$ is said to have property~(S) (see~\cite{Wassermann}) if $F(\mathcal{A}, \tilde{\mathcal{B}}, \mathcal{A}\otimes_{\text{min}}\mathcal{B}) = \mathcal{A}\otimes_{\text{min}}\tilde{\mathcal{B}}$ for each $C^*$-subalgebra $\tilde{\mathcal{B}}\subset \mathcal{B}$. For example, it is known that every nuclear $C^*$-algebra has property~(S). 
\section{Simple Invariant \texorpdfstring{$C^*$}{}-algebras}
\label{sec:simpleinvariantC*-algebras}
In this section, we prove an intermediate splitting theorem under some additional assumptions on the group actions. Our first goal is to follow the techniques of Zacharias~\cite{zacharias2001splitting} to obtain Proposition~\ref{prop:intromainsplittingC*}. This uses essentially the same methods as in the original, but we provide the details for the sake of completeness. Before giving the proof, we need two technical results (which are again minor modifications of the arguments found in \cite{zacharias2001splitting}). The first says that in a $G$-simple $C^*$-algebra, the unit can be written as an image of an elementary u.c.p map. We do a straightforward modification of this well-known result for simple $C^*$-algebras (see, for example, \cites{cuntz1977structure} or \cite[Exercise~4.8]{Rordambook}).
\begin{proposition}
\label{prop:elementaryapprox}
Let $\mathcal{A}$ be  a $G$-simple unital $C^*$-algebra. Let $a\in\mathcal{A}$ be a non-zero positive element with $\|a\|=1$. Then there exist $\tilde{b}_1, \tilde{b}_2,\ldots,\tilde{b}_m\in\mathcal{A}$ and $t_1, t_2, \ldots,t_m\in G$ such that
\begin{equation}\label{forone}1_{\mathcal{A}}=\sum_{i=1}^m\tilde{b}_i\sigma_{t_i}(a)\tilde{b}_i^*.\end{equation}
We may further assume that
\[\sum_{i=1}^m\tilde{b}_i\tilde{b}_i^*\le 2.\]
\begin{proof}

Given $a\in\mathcal{A}^{+}$, consider $$I_{\text{alg}}^a=\text{Span}\left\{b\sigma_{s}(a)c:~c,b\in\mathcal{A}, s\in G\right\}.$$
Since $\overline{I_{\text{alg}}^a}$ is a closed $G$-invariant ideal in $\mathcal{A}$, it follows that $1_{\mathcal{A}}\in \overline{I_{\text{alg}}^a}$. In particular, given $\epsilon<1$, we can find an element of the form $\sum_{i=1}^nb_i\sigma_{s_i}(a)c_i\in I_{\text{alg}}^a$ such that
\[\left\|1_{\mathcal{A}}-\sum_{i=1}^nb_i\sigma_{s_i}(a)c_i\right\|<\epsilon<1.\]
In particular, this implies that $\tilde{a}=\sum_{i=1}^nb_i\sigma_{s_i}(a)c_i$ is an invertible element. Therefore,
\begin{equation*}
1_{\mathcal{A}}=\tilde{a}^{-1}\tilde{a}=\sum_{i=1}^n\tilde{a}^{-1}b_i\sigma_{s_i}(a)c_i.
\end{equation*}
Rewriting $\tilde{a}^{-1}b_i=a_i$, we obtain that
\[1_{\mathcal{A}}=\tilde{a}^{-1}\tilde{a}=\sum_{i=1}^na_i\sigma_{s_i}(a)c_i.\]
Therefore,\[\sum_{i=1}^na_i\sigma_{s_i}(a)c_i+\sum_{i=1}^nc_i^*\sigma_{s_i}(a)a_i^*\ge 1_{\mathcal{A}}.\]
On the other hand,
\begin{align*}
&\sum_{i=1}^na_i\sigma_{s_i}(a)a_i^*+\sum_{i=1}^nc_i^*\sigma_{s_i}(a)c_i-\left(\sum_{i=1}^na_i\sigma_{s_i}(a)c_i+\sum_{i=1}^nc_i^*\sigma_{s_i}(a)a_i^*\right)\\&=\sum_{i=1}^na_i\sigma_{s_i}(a)(a_i^*-c_i)+\sum_{i=1}^nc_i^*\sigma_{s_i}(a)(c_i-a_i^*)\\&=\sum_{i=1}^na_i\sigma_{s_i}(a)(a_i^*-c_i)-\sum_{i=1}^nc_i^*\sigma_{s_i}(a)(a_i^*-c_i)\\&=\sum_{i=1}^n(a_i-c_i^*)\sigma_{s_i}(a)(a_i^*-c_i)\ge 0.
\end{align*}
Consequently, it follows that
\[\sum_{i=1}^na_i\sigma_{s_i}(a)a_i^*+\sum_{i=1}^nc_i^*\sigma_{s_i}(a)c_i\ge 1_{\mathcal{A}}.\]
Let us denote $\sum_{i=1}^na_i\sigma_{s_i}(a)a_i^*+\sum_{i=1}^nc_i^*\sigma_{s_i}(a)c_i=\tilde{b}$ for the sake of simplicity. Since $\tilde{b}\ge 1_{\mathcal{A}}$, we see that the spectrum $\text{Sp}(\tilde{b}-1_{\mathcal{A}})\subset [0,\infty)$. Therefore, $\text{Sp}(\tilde{b})\subset [1,\infty)$. In particular, $\tilde{b}$ is invertible. Moreover, $\tilde{b}^{\frac{-1}{2}}$ is well-defined and is a self-adjoint element. Therefore,
\begin{align*}
 1_{\mathcal{A}}&=\tilde{b}^{\frac{-1}{2}}\tilde{b}\tilde{b}^{\frac{-1}{2}}\\&= \tilde{b}^{\frac{-1}{2}}\left(\sum_{i=1}^na_i\sigma_{s_i}(a)a_i^*+\sum_{i=1}^nc_i^*\sigma_{s_i}(a)c_i\right)\tilde{b}^{\frac{-1}{2}}\\&=\sum_{i=1}^n\tilde{b}^{\frac{-1}{2}}a_i\sigma_{s_i}(a)a_i^*\tilde{b}^{\frac{-1}{2}}+\sum_{i=1}^n\tilde{b}^{\frac{-1}{2}}c_i^*\sigma_{s_i}(a)c_i\tilde{b}^{\frac{-1}{2}}.
\end{align*}
Let us now take $m=2n$, and rename $\tilde{b}^{\frac{-1}{2}}a_i=\tilde{b}_i$, $\tilde{b}^{\frac{-1}{2}}c_i^*=\tilde{b}_{n+i}$, $t_i=s_i$  and $t_{i+n}=s_i$ for $i=1,2,\ldots,n$. The first conclusion follows.

Now, given any $a\in\mathcal{A}^+$, we denote by $[a]_{\epsilon}$, the spectral projection onto the subspace in the universal representation, where $a\ge \epsilon$. Let $f_{\epsilon} : [0, 1]\to[0, 1]$ be
defined by $f_{\epsilon}(t) = t$ for $t\in(\epsilon, 1]$, $f_{\epsilon}(t) = 2(t-\frac{\epsilon}{2})$ for $t\in (\frac{\epsilon}{2}, \epsilon]$ and $f_{\epsilon}(t)=0$ otherwise. Let $c= f_{1}(a)af_{1}(a)\in\mathcal{A}^+$. Now, arguing as above, we can find $y_1, y_2,\ldots,y_m\in\mathcal{A}$ and $t_1, t_2, \ldots,t_m\in G$ such that
\[\sum_{i=1}^my_i\sigma_{t_i}(c)y_i^*=1_\mathcal{A}.\]
Let $\tilde{a}_i=\sigma_{t_i^{-1}}(y_i)f_1(a)$. Then, $$\tilde{a}_i[a]_{\frac{1}{2}}=\sigma_{t_i^{-1}}(y_i)f_1(a)[a]_{\frac{1}{2}}=\sigma_{t_i^{-1}}(y_i)f_1(a)=\tilde{a}_i, \text{ and }  2[a]_{\frac{1}{2}}a \ge [a]_{\frac{1}{2}}.$$
Therefore, we see that
\begin{align*}
1_{\mathcal{A}}&=\sum_{i=1}^my_i\sigma_{t_i}(f_1(a))\sigma_{t_i}(a)\sigma_{t_i}(f_1(a))y_i^*\\&=\sum_{i=1}^m\sigma_{t_i}(\tilde{a}_i)\sigma_{t_i}(a)\sigma_{t_i}(\tilde{a}_i^*)\\&=\sum_{i=1}^m\sigma_{t_i}\left(\tilde{a}_ia\tilde{a}_i^*\right)\\&=\sum_{i=1}^m\sigma_{t_i}\left(\tilde{a}_i[a]_{\frac{1}{2}}a\tilde{a}_i^*\right)\ge \frac{1}{2}\sum_{i=1}^m\sigma_{t_i}\left(\tilde{a}_i[a]_{\frac{1}{2}}\tilde{a}_i^*\right)\\&=\frac{1}{2}\sum_{i=1}^m\sigma_{t_i}\left(\tilde{a}_i\right)\sigma_{t_i}\left(\tilde{a}_i^*\right)
\end{align*}
Letting $\tilde{b}_i=\sigma_{t_i}(\tilde{a}_i)$, we see that
\[\sum_{j=1}^m\tilde{b}_i\sigma_{t_i}(a)\tilde{b}_i^*=1_\mathcal{A}\text{ and }\sum_{i=1}^m\tilde{b}_i\tilde{b}_i^*\le 2.\]
\end{proof}
\end{proposition}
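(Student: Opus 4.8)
The plan is to use $G$-simplicity to place $1_{\mathcal{A}}$ inside the closed $G$-invariant ideal generated by $a$, and then to reshape the resulting expression into the required symmetric form. Concretely, I would first form the algebraic span $I^a_{\mathrm{alg}}=\operatorname{span}\{b\,\sigma_s(a)\,c:b,c\in\mathcal{A},\,s\in G\}$ and note that $\overline{I^a_{\mathrm{alg}}}$ is a closed two-sided $G$-invariant ideal which is nonzero (as $a\neq0$); $G$-simplicity then forces $\overline{I^a_{\mathrm{alg}}}=\mathcal{A}$, so in particular $1_{\mathcal{A}}\in\overline{I^a_{\mathrm{alg}}}$. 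Choosing $\tilde a=\sum_i b_i\sigma_{s_i}(a)c_i$ with $\|1_{\mathcal{A}}-\tilde a\|<1$ makes $\tilde a$ invertible, and left multiplication by $\tilde a^{-1}$ (absorbing it into the $b_i$, say $a_i:=\tilde a^{-1}b_i$) gives the exact identity $1_{\mathcal{A}}=\sum_i a_i\sigma_{s_i}(a)c_i$.

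Next I would symmetrize. Taking adjoints and averaging gives $1_{\mathcal{A}}=\tfrac12\big(\sum_i a_i\sigma_{s_i}(a)c_i+\sum_i c_i^*\sigma_{s_i}(a)a_i^*\big)$, while the positivity of each $\sigma_{s_i}(a)$ yields the completion-of-squares inequality $\sum_i(a_i-c_i^*)\sigma_{s_i}(a)(a_i^*-c_i)\ge0$. Combining these shows that $\tilde b:=\sum_i a_i\sigma_{s_i}(a)a_i^*+\sum_i c_i^*\sigma_{s_i}(a)c_i$ satisfies $\tilde b\ge1_{\mathcal{A}}$, hence $\operatorname{Sp}(\tilde b)\subset[1,\infty)$ and $\tilde b^{-1/2}$ is a well-defined self-adjoint element. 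Conjugating $\tilde b$ by $\tilde b^{-1/2}$ and relabelling the $2n$ summands as $\tilde b_i:=\tilde b^{-1/2}a_i$ (resp.\ $\tilde b^{-1/2}c_i^*$) with the corresponding $t_i:=s_i$ produces the sought identity $1_{\mathcal{A}}=\sum_{i=1}^m\tilde b_i\sigma_{t_i}(a)\tilde b_i^*$.

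For the quantitative bound $\sum_i\tilde b_i\tilde b_i^*\le2$, the idea is to run the construction above on a cut-down of $a$ rather than on $a$ itself. Using continuous functional calculus I would introduce a cutoff $f_1$ vanishing on $[0,\tfrac12]$ and set $c=f_1(a)\,a\,f_1(a)\in\mathcal{A}^+$; applying the first part to $c$ gives $\sum_i y_i\sigma_{t_i}(c)y_i^*=1_{\mathcal{A}}$. Putting $\tilde a_i=\sigma_{t_i^{-1}}(y_i)f_1(a)$, the vanishing of $f_1$ below $\tfrac12$ yields $\tilde a_i[a]_{1/2}=\tilde a_i$ for the spectral projection $[a]_{1/2}$, while $a\ge\tfrac12$ on its range gives $2[a]_{1/2}a\ge[a]_{1/2}$. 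Inserting $[a]_{1/2}$ and using this last estimate lets me replace $\sigma_{t_i}(a)$ by $\tfrac12$ at the cost of a factor $2$, so that $1_{\mathcal{A}}=\sum_i\sigma_{t_i}(\tilde a_i a\tilde a_i^*)\ge\tfrac12\sum_i\sigma_{t_i}(\tilde a_i)\sigma_{t_i}(\tilde a_i^*)$; with $\tilde b_i:=\sigma_{t_i}(\tilde a_i)$ this is exactly $\sum_i\tilde b_i\tilde b_i^*\le2$.

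The ideal-theoretic step and the algebraic symmetrization are routine; I expect the main obstacle to be the last paragraph, where the bookkeeping between the cut-down $c$, the function $f_1$, and the spectral projection $[a]_{1/2}$ must be arranged so that conjugation simultaneously reproduces the unit and loses only a controlled factor of $2$. The delicate point is ensuring $\tilde a_i[a]_{1/2}=\tilde a_i$ (so that $[a]_{1/2}$ can be freely inserted) together with $2[a]_{1/2}a\ge[a]_{1/2}$, since it is precisely the compatibility of these two facts that upgrades the bare identity for $1_{\mathcal{A}}$ into the desired norm estimate.
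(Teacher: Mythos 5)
Your proposal is correct and follows essentially the same route as the paper's proof: the $G$-invariant ideal generated by the $G$-orbit of $a$, inversion of an approximant of $1_{\mathcal{A}}$, symmetrization via completion of squares and conjugation by $\tilde b^{-1/2}$ for the identity \eqref{forone}, and then the cut-down $c=f_1(a)af_1(a)$ together with $\tilde a_i[a]_{1/2}=\tilde a_i$ and $2[a]_{1/2}a\ge[a]_{1/2}$ for the bound $\sum_i\tilde b_i\tilde b_i^*\le 2$. The only cosmetic difference is that you obtain $\tilde b\ge 1_{\mathcal{A}}$ by averaging the identity with its adjoint rather than summing, which is equivalent.
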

We will use the elementary u.c.p maps to approximate states on $\mathcal{A}$. The proof is the same as in \cite[Proposition~2.4]{zacharias2001splitting}. We include it for the sake of completion.
\begin{proposition}
\label{prop:approxucpstate}
Let $\mathcal{A}$ be a $G$-simple unital $C^*$-algebra. Let $\varphi$ be a state on $\mathcal{A}$. Then there exists a net of ucp maps $\Psi_{\lambda}:\mathcal{A}\to\mathcal{A}$ of the form
\[\Psi_{\lambda}(\cdot)=\sum_{j}\tilde{a}_j(\lambda)\sigma_{t_j}(\cdot)\tilde{a}_j(\lambda)^*,\] where $\tilde{a_j}(\lambda)\in\mathcal{A}$ and $t_j\in G$, such that
\[\Psi_{\lambda}(a)\xrightarrow{\lambda\to\infty}\varphi(a)1_{\mathcal{A}},~\forall~a\in\mathcal{A}.\]
\begin{proof}
Since the convex combination of pure states is dense in $S(\mathcal{A})$ in the weak$^*$-topology, it is enough to prove the claim for a pure state $\varphi$. Towards this end, let $\varphi$ be a pure state on $\mathcal{A}$. Then, using \cite[Lemma~2.1]{zacharias2001splitting}, we can find a net $\{a_{\lambda}\}\subset \mathcal{A}^+$ (which is a net of approximate identity for a specific subalgebra of $\mathcal{A}$) such that
\[\left\|a_{\lambda}xa_{\lambda}-\varphi(x)a_{\lambda}^2\right\|\xrightarrow{}0,~\forall x\in\mathcal{A}.\] Using Proposition~\ref{prop:elementaryapprox}, we can find $\tilde{a}_{1(\lambda)}, \tilde{a}_{2(\lambda)},\ldots,\tilde{a}_{m(\lambda)}\in\mathcal{A}$ and $t_{1(\lambda)}, t_{2(\lambda)}, \ldots,t_{m(\lambda)}\in G$ such that
\begin{equation*}\label{newforone}1_{\mathcal{A}}=\sum_{i=1}^{m(\lambda)}\tilde{a}_{i(\lambda)}\sigma_{t_{i(\lambda)}}(a_{\lambda}^2)\tilde{a}_{i(\lambda)}^*.\end{equation*}
Let $\Psi_{\lambda}:\mathcal{A}\to\mathcal{A}$  be defined by
\[\Psi_{\lambda}(x)=\sum_{i=1}^{m(\lambda)}\tilde{a}_{i(\lambda)}\sigma_{t_{i(\lambda)}}(a_{\lambda}xa_{\lambda})\tilde{a}_{i(\lambda)}^*\]
Consequently, we see that
\begin{equation}
\label{eq:ucp}\Psi_{\lambda}(1_{\mathcal{A}})=\sum_{i=1}^{m(\lambda)}\tilde{a}_{i(\lambda)}\sigma_{t_{i(\lambda)}}(a_{\lambda}^2)\tilde{a}_{i(\lambda)}^*=1_{\mathcal{A}}.\end{equation}
We now observe that for each $x\in\mathcal{A}$,
\begin{align*}
&\left\|\Psi_{\lambda}(x)-\varphi(x)1_{\mathcal{A}}\right\|\\
&=\left\|\sum_{i=1}^{m(\lambda)}\tilde{a}_{i(\lambda)}\sigma_{t_{i(\lambda)}}(a_{\lambda}xa_{\lambda})\tilde{a}_{i(\lambda)}^*-\varphi(x)1_{\mathcal{A}}\right\|\\
&=\left\|\sum_{i=1}^{m(\lambda)}\tilde{a}_{i(\lambda)}\sigma_{t_{i(\lambda)}}\left(a_{\lambda}xa_{\lambda}-\varphi(x)a_{\lambda}^2\right)\tilde{a}_{i(\lambda)}^*+\sum_{i=1}^{m(\lambda)}\tilde{a}_{i(\lambda)}\sigma_{t_{i(\lambda)}}(\varphi(x)a_{\lambda}^2)\tilde{a}_{i(\lambda)}^*-\varphi(x)1_{\mathcal{A}}\right\|\\
&\le\left\|\sum_{i=1}^{m(\lambda)}\tilde{a}_{i(\lambda)}\sigma_{t_{i(\lambda)}}\left(a_{\lambda}xa_{\lambda}-\varphi(x)a_{\lambda}^2\right)\tilde{a}_{i(\lambda)}^*\right\|+\left\|\sum_{i=1}^{m(\lambda)}\tilde{a}_{i(\lambda)}\sigma_{t_{i(\lambda)}}(\varphi(x)a_{\lambda}^2)\tilde{a}_{i(\lambda)}^*-\varphi(x)1_{\mathcal{A}}\right\|\\&=\left\|\sum_{i=1}^{m(\lambda)}\tilde{a}_{i(\lambda)}\sigma_{t_{i(\lambda)}}\left(a_{\lambda}xa_{\lambda}-\varphi(x)a_{\lambda}^2\right)\tilde{a}_{i(\lambda)}^*\right\|+\left\|\varphi(x)\left(\sum_{i=1}^{m(\lambda)}\tilde{a}_{i(\lambda)}\sigma_{t_{i(\lambda)}}(a_{\lambda}^2)\tilde{a}_{i(\lambda)}^*\right)-\varphi(x)1_{\mathcal{A}}\right\|\\
&\stackrel{\eqref{eq:ucp}}{=} \left\|\sum_{i=1}^{m(\lambda)}\tilde{a}_{i(\lambda)}\sigma_{t_{i(\lambda)}}\left(a_{\lambda}xa_{\lambda}-\varphi(x)a_{\lambda}^2\right)\tilde{a}_{i(\lambda)}^*\right\|.
\end{align*}
Let $\Phi_{\lambda}:\mathcal{A}\to\mathcal{A}$  be defined by
\[\Phi_{\lambda}(x)=\sum_{i=1}^{m(\lambda)}\tilde{a}_{i(\lambda)}\sigma_{t_{i(\lambda)}}(x)\tilde{a}_{i(\lambda)}^*,\]
we see that
\[\left\|\Psi_{\lambda}(x)-\varphi(x)1_{\mathcal{A}}\right\|\le \left\|\Phi_{\lambda}\left(a_{\lambda}xa_{\lambda}-\varphi(x)a_{\lambda}^2\right)\right\|.\]
Moreover, since,
\[\sum_{j}\tilde{a}_j(\lambda)\tilde{a}_j(\lambda)^*\le 2,\]
it follows that $\|\Phi_{\lambda}\|\le 2$ for each $\lambda$. Consequently, we obtain that
\[\left\|\Psi_{\lambda}(x)-\varphi(x)1_{\mathcal{A}}\right\|\le \left\|\Phi_{\lambda}\left(a_{\lambda}xa_{\lambda}-\varphi(x)a_{\lambda}^2\right)\right\|\le 2 \left\|a_{\lambda}xa_{\lambda}-\varphi(x)a_{\lambda}^2\right\|\xrightarrow[]{\lambda\to\infty}0.\]
\end{proof}
\end{proposition}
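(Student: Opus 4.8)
The plan is to reduce the claim to the case of a pure state, and then to combine two ingredients: the excision of pure states and the unit-representation result of Proposition~\ref{prop:elementaryapprox}. Let $\mathcal{S}$ denote the set of states $\varphi$ for which a net of elementary ucp maps of the stated form $\sum_j\tilde{a}_j\sigma_{t_j}(\cdot)\tilde{a}_j^*$ converges pointwise in norm to $\varphi(\cdot)1_{\mathcal{A}}$. I would first observe that $\mathcal{S}$ is convex and weak$^*$-closed: convexity holds because a convex combination $t\Psi+(1-t)\Phi$ of two maps of that form is again of that form (absorb $\sqrt{t}$ and $\sqrt{1-t}$ into the coefficients and concatenate the two sums), and it remains unital and completely positive; weak$^*$-closedness follows from a routine $(\text{finite set},\epsilon)$-diagonal argument. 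Since the pure states are the extreme points of the weak$^*$-compact convex set $S(\mathcal{A})$, Krein--Milman then reduces the problem to showing that every pure state lies in $\mathcal{S}$.

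So fix a pure state $\varphi$. The key input is the excision property (\cite[Lemma~2.1]{zacharias2001splitting}, an Akemann--Anderson--Pedersen-type statement): there is a net $\{a_\lambda\}\subset\mathcal{A}^+$ of norm-one positive elements with
\[\left\|a_\lambda x a_\lambda-\varphi(x)a_\lambda^2\right\|\xrightarrow{\lambda\to\infty}0,\quad\forall x\in\mathcal{A}.\]
Applying Proposition~\ref{prop:elementaryapprox} to the positive norm-one element $a_\lambda^2$, I obtain $\tilde{a}_{i(\lambda)}\in\mathcal{A}$ and $t_{i(\lambda)}\in G$ with $\sum_i\tilde{a}_{i(\lambda)}\sigma_{t_{i(\lambda)}}(a_\lambda^2)\tilde{a}_{i(\lambda)}^*=1_{\mathcal{A}}$ and $\sum_i\tilde{a}_{i(\lambda)}\tilde{a}_{i(\lambda)}^*\le 2$. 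I then set
\[\Psi_\lambda(x):=\sum_i\tilde{a}_{i(\lambda)}\sigma_{t_{i(\lambda)}}(a_\lambda x a_\lambda)\tilde{a}_{i(\lambda)}^*.\]
Using $\sigma_{t}(a_\lambda x a_\lambda)=\sigma_t(a_\lambda)\sigma_t(x)\sigma_t(a_\lambda)$ and the self-adjointness of $a_\lambda$, this is visibly of the required elementary form with coefficients $\tilde{a}_{i(\lambda)}\sigma_{t_{i(\lambda)}}(a_\lambda)$; it is completely positive and, by the first identity above, unital, so each $\Psi_\lambda$ is ucp.

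For the convergence I would rewrite, using $\varphi(x)1_{\mathcal{A}}=\varphi(x)\sum_i\tilde{a}_{i(\lambda)}\sigma_{t_{i(\lambda)}}(a_\lambda^2)\tilde{a}_{i(\lambda)}^*$, the difference as
\[\Psi_\lambda(x)-\varphi(x)1_{\mathcal{A}}=\sum_i\tilde{a}_{i(\lambda)}\sigma_{t_{i(\lambda)}}\!\left(a_\lambda x a_\lambda-\varphi(x)a_\lambda^2\right)\tilde{a}_{i(\lambda)}^*.\]
Introducing the auxiliary completely positive map $\Phi_\lambda(y)=\sum_i\tilde{a}_{i(\lambda)}\sigma_{t_{i(\lambda)}}(y)\tilde{a}_{i(\lambda)}^*$, whose norm equals $\|\Phi_\lambda(1_{\mathcal{A}})\|=\|\sum_i\tilde{a}_{i(\lambda)}\tilde{a}_{i(\lambda)}^*\|\le 2$ by the second estimate of Proposition~\ref{prop:elementaryapprox}, I get
\[\left\|\Psi_\lambda(x)-\varphi(x)1_{\mathcal{A}}\right\|=\left\|\Phi_\lambda\!\left(a_\lambda x a_\lambda-\varphi(x)a_\lambda^2\right)\right\|\le 2\left\|a_\lambda x a_\lambda-\varphi(x)a_\lambda^2\right\|\xrightarrow{\lambda\to\infty}0,\]
which finishes the pure-state case and hence the proposition. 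The main obstacle is the excision step: producing the net $\{a_\lambda\}$ that asymptotically intertwines two-sided multiplication with the scalar $\varphi(x)$ is exactly where purity of $\varphi$ is used, and everything downstream is a formal manipulation. The one subtle point I want to flag is the uniform bound $\sum_i\tilde{a}_{i(\lambda)}\tilde{a}_{i(\lambda)}^*\le 2$ from Proposition~\ref{prop:elementaryapprox}: without it the maps $\Phi_\lambda$ would not be uniformly bounded, and the error term could not be controlled, so it is essential that Proposition~\ref{prop:elementaryapprox} delivers the coefficients with that extra estimate.
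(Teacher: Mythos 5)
Your proposal is correct and follows essentially the same route as the paper: reduction to pure states via convexity and weak$^*$-density, the excision net from \cite[Lemma~2.1]{zacharias2001splitting}, the unit representation of $a_\lambda^2$ from Proposition~\ref{prop:elementaryapprox} with the bound $\sum_i\tilde{a}_{i(\lambda)}\tilde{a}_{i(\lambda)}^*\le 2$, and the estimate through the auxiliary map $\Phi_\lambda$ of norm at most $2$. Your explicit Krein--Milman justification of the pure-state reduction and the remark that $\Psi_\lambda$ has coefficients $\tilde{a}_{i(\lambda)}\sigma_{t_{i(\lambda)}}(a_\lambda)$ are slightly more detailed than the paper's presentation but not a different argument.
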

We are ready to prove the intermediate splitting Proposition~\ref{prop:intromainsplittingC*}. As usual, we show that the slice map induced by every state on the underlying $C^*$-algebra preserves the intermediate $C^*$-algebra. For any state $\varphi$ on $\mathcal{A}$, the corresponding slice map $\mathbb{E}_{\varphi}:\mathcal{A}\otimes_{\text{min}}\mathcal{B}\to\mathcal{B}$ is given by
\[\mathbb{E}_{\varphi}(a\otimes b)=\varphi(a)b, a\in\mathcal{A},b\in\mathcal{B}.\]
\begin{proof}[Proof of Proposition~\ref{prop:intromainsplittingC*}]
We prove it in a couple of steps.\\
\noindent
First, we show that for every $\varphi\in S(\mathcal{A})$, $\mathbb{E}_{\varphi}(\mathcal{C})\subset\left(\mathcal{C}\cap \left(1\otimes_{\text{min}} \mathcal{B}\right)\right).$

Let $\varphi\in S(\mathcal{A})$. Let $a\in\mathcal{C}$ and $\epsilon>0$ be given. We can find $a_1, a_2,\ldots, a_m\in \mathcal{A}$ and $b_1, b_2,\ldots, b_m\in \mathcal{B}$ such that
\[\left\|a-\sum_{i=1}^ma_i\otimes b_i\right\|<\frac{\epsilon}{3}.\] Since $\mathbb{E}_{\varphi}$ is contractive and $\mathbb{E}_{\varphi}(a_i\otimes b_i)=\varphi(a_i)1_{\mathcal{A}}\otimes b_i$, it follows that
\[\left\|\mathbb{E}_{\varphi}(a)-\sum_{i=1}^m\varphi(a_i)1_{\mathcal{A}}\otimes b_i\right\|<\frac{\epsilon}{3}.\]
Let $M=\max_{1\le i\le m}\|b_i\|$. We denote by $\text{UCP}(\mathcal{A})$, the collection of all unital completely positive maps from $\mathcal{A}$ to $\mathcal{A}$. Using Proposition~\ref{prop:approxucpstate} to $\Gamma\curvearrowright \mathcal{A}$, we can find $$\Phi_{a_{\lambda}}\in \text{UCP}(\mathcal{A})\text{ with } \Phi_{a_{\lambda}}(x)=\sum_{i=1}^n\tilde{a}_{i(\lambda)}\sigma_{t_i(\lambda)}(x)\tilde{a}_{i(\lambda)}^*,~x\in\mathcal{A},~t_i(\lambda)\in \Gamma$$ such that
\[\left\|\Phi_{a_{\lambda}}(a_i)-\varphi(a_i)\right\|<\frac{\epsilon}{3mM},~i=1,2,\ldots,m.\]
We point out that $\Phi_{a_{\lambda}}\otimes \text{id}: \mathcal{A}\otimes_{\text{min}} \mathcal{B}\to \mathcal{A}\otimes_{\text{min}} \mathcal{B}$ is a completely bounded map defined by $\left(\Phi_{a_{\lambda}}\otimes \text{id}\right)(a\otimes b)=\Phi_{a_{\lambda}}(a)\otimes b$ (see \cite[Corollary~3.5.5]{BroOza08}). Therefore, we see that
\begin{align*}
&\left\|\left(\Phi_{a_{\lambda}}\otimes \text{id}\right)\left(\sum_{i=1}^ma_i\otimes b_i\right)-\sum_{i=1}^m\varphi(a_i)1_{\mathcal{A}}\otimes b_i\right\|\\&=\left\|\sum_{i=1}^m\Phi_{a_{\lambda}}(a_i)\otimes b_i-\sum_{i=1}^m\varphi(a_i)1_{\mathcal{A}}\otimes b_i\right\|\\&\le \sum_{i=1}^m\left\|\Phi_{a_{\lambda}}(a_i)-\varphi(a_i)\right\|\|b_i\|\le \sum_{i=1}^m\frac{\epsilon}{3mM}\|b_i\|<\frac{\epsilon}{3}.\end{align*}
Using the triangle inequality, we now see that
\begin{align*}
\left\|\mathbb{E}_{\varphi}(a)-\left(\Phi_{a_{\lambda}}\otimes \text{id}\right)(a)\right\|&\le \left\|\mathbb{E}_{\varphi}(a)-\mathbb{E}_{\varphi}\left(\sum_{i=1}^ma_i\otimes b_i\right)\right\|\\&+\left\|\sum_{i=1}^m\varphi(a_i)1_{\mathcal{A}}\otimes b_i-\left(\Phi_{a_{\lambda}}\otimes \text{id}\right)\left(\sum_{i=1}^ma_i\otimes b_i\right)\right\|\\&+\left\|\left(\Phi_{a_{\lambda}}\otimes \text{id}\right)\left(\sum_{i=1}^ma_i\otimes b_i\right)-\left(\Phi_{a_{\lambda}}\otimes \text{id}\right)(a)\right\|\\&\le \frac{\epsilon}{3}+ \frac{\epsilon}{3}+ \frac{\epsilon}{3}=\epsilon.
\end{align*}
Since $\Phi_{a_{\lambda}}\otimes \text{id}$ leaves $\mathcal{C}$ invariant, the claim follows. On the other hand, since $\left(\mathcal{C}\cap \left(1\otimes_{\text{min}} \mathcal{B}\right)\right)\subset\mathbb{E}_{\varphi}(\mathcal{C})$, it follows that $$ \left(\mathcal{C}\cap \left(1\otimes_{\text{min}} \mathcal{B}\right)\right)=\mathbb{E}_{\varphi}(\mathcal{C}).$$ Since $\mathcal{A}$ has property (S), we can now appeal to \cite[Proposition~10]{Wassermann} to finish the proof.
\end{proof}
\begin{remark}
\label{rem:ideal}
If we drop the assumption of $\Gamma$-simplicity of $\mathcal{A}:=C(X)$, then $\mathcal{C}$ may not split. Indeed, since $\Gamma\curvearrowright X$ is not minimal, some $\Gamma$-invariant closed  nontrivial two-sided ideal $J$ in $C(X)$ exist. It is now clear that $\mathcal{C}:=C(X)\otimes_{\text{min}} \mathbb{C}+ J\otimes_{\text{min}} C(Y)$ is a $\Gamma$-invariant intermediate algebra that does not split.
\end{remark}
Below, we present various examples such that Proposition~\ref{prop:intromainsplittingC*} could be applied.
\begin{example}
Given a group $G$, let $\Gamma\le G$ be a normal abelian subgroup. Let $\mathcal{B}=C_r^*(\Gamma)$. Note that $\Gamma\curvearrowright\mathcal{B}$ is trivial with respect to the conjugation action. Moreover, $G\curvearrowright C_r^*(\Gamma)$ via the conjugation action. Let $\mathcal{A}$ be a $\Gamma$-simple unital $C^*$-algebra with property~(S). Note that any $G$-invariant closed two-sided ideal is also $\Gamma$-invariant, hence trivial. So, $\mathcal{A}$ is also $G$-simple. Consider the inclusion of $G$-$C^*$-algebras
\[\mathcal{A}\otimes_{\text{min}} 1\subset\mathcal{C}\subset \mathcal{A}\otimes_{\text{min}} C_r^*(\Gamma)\]
Then, it follows from Proposition~\ref{prop:intromainsplittingC*} that $\mathcal{C}$ splits.
\end{example}
We briefly recall the notion of acylindrically hyperbolic groups and refer the reader to \cite{OS} for more details. Consider an isometric action $G\curvearrowright(X,d)$ on a metric space. This action is called acylindrical if, given any $\epsilon > 0$, there exists $\delta, N > 0$ such that for any two points $x,y$ in $X$ with $d(x,y) \ge \delta$, we have
\[\left|\{g \in G: d(x, gx) \le \epsilon \text{ and } d(y, gy) \le \epsilon\}\right| \le N.\]
A group $G$ is called acylindrically hyperbolic if it can admit a non-elementary acylindrical action on a hyperbolic space $S$. Here, the action is non-elementary means the limit set of $G$ on the Gromov boundary $\partial S$ contains at least three points. The notion of the plump subgroup was introduced in \cite{amrutam2021} and was used to determine the structure of the intermediate subalgebras of the crossed product. 
\begin{proposition}
\label{prop:acylindplump}
Let $N$ be a normal subgroup of an acylindrically hyperbolic group $G$. Assume that $G$ is $C^*$-simple. Then, $N$ is plump in $G$ in the sense of \cite[Definition 1.1]{ursu2019relative}.
\begin{proof}

It is enough to show that $C_{G}(N)$, the centralizer of $N$ inside $G$, is $\{e\}$ and then the claim follows from \cite[Theorem~1.3]{ursu2019relative}. Since $G$ is acylindrically hyperbolic, it admits a non-elementary acylindrical action on a hyperbolic space $S$. We observe that $N$ must be infinite. Using \cite[Theorem 3.7]{AO}, we see that every infinite normal subgroup is $s$-normal.   Hence, the action $N \curvearrowright S$ is also non-elementary by \cite[Lemma 7.1]{OS}. Also, by
\cite[Theorem 1.2]{OS}, $N$ contains at least one loxodromic element, say $n$. Hence, by \cite[Corollary 6.9]{OS}, $C_{G}(n)$ is virtually cyclic, hence amenable. Therefore, $C_{G}(N)$ is a normal amenable subgroup of $G$. Since $G$ has trivial amenable radical, $C_{G}(N) = \{e\}$. This concludes
the proof.
\end{proof}
\end{proposition}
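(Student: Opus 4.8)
The plan is to reduce the plumpness of $N$ to a purely group-theoretic statement about centralizers and then to extract the required information from the acylindrical geometry. Recall the relative $C^*$-simplicity criterion of Ursu, \cite[Theorem~1.3]{ursu2019relative}: for a normal subgroup of a $C^*$-simple group, plumpness is guaranteed once one knows that the centralizer $C_G(N)$ inside $G$ is trivial. Thus the whole task collapses to establishing that $C_G(N)=\{e\}$. (We may assume $N\neq\{e\}$, since otherwise there is nothing to prove.)

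First I would record that $N$ is infinite. Since $G$ is $C^*$-simple, its amenable radical is trivial; as every finite group is amenable, a nontrivial finite normal subgroup would be contained in the amenable radical, a contradiction. Hence $N$ is an infinite normal subgroup of $G$.

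The core of the argument is to locate a loxodromic element inside $N$ itself. Fixing the non-elementary acylindrical action $G\curvearrowright(S,d)$ on a hyperbolic space that witnesses acylindrical hyperbolicity, I would first upgrade $N$ from merely normal to $s$-normal via \cite[Theorem~3.7]{AO}, which applies precisely because $N$ is infinite and normal. Once $N$ is $s$-normal, the restricted action $N\curvearrowright S$ remains non-elementary by \cite[Lemma~7.1]{OS}, and a non-elementary acylindrical action is guaranteed to contain loxodromic elements by \cite[Theorem~1.2]{OS}; this produces some loxodromic $n\in N$. I expect this to be the main obstacle, since it is the only point where the full strength of the acylindrical geometry is invoked and where the normality of $N$ must be converted into genuine dynamical information about the action of $N$ on $S$.

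Finally I would close with an amenability dichotomy. In an acylindrically hyperbolic group the centralizer of a loxodromic element is virtually cyclic, hence amenable, by \cite[Corollary~6.9]{OS}. Since $n\in N$, we have $C_G(N)\le C_G(n)$, so $C_G(N)$ is amenable; being the centralizer of a normal subgroup, it is itself normal in $G$. As $G$ is $C^*$-simple and therefore has trivial amenable radical, we conclude $C_G(N)=\{e\}$, which by the first reduction shows that $N$ is plump.
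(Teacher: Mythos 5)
Your proposal is correct and follows the paper's own proof essentially verbatim: the same reduction to $C_G(N)=\{e\}$ via \cite[Theorem~1.3]{ursu2019relative}, the same chain of citations ($s$-normality from \cite[Theorem~3.7]{AO}, non-elementarity of $N\curvearrowright S$ from \cite[Lemma~7.1]{OS}, a loxodromic element from \cite[Theorem~1.2]{OS}, virtually cyclic centralizer from \cite[Corollary~6.9]{OS}), and the same closing amenable-radical argument. The only difference is that you spell out why $N$ is infinite (a nontrivial finite normal subgroup would lie in the amenable radical), a detail the paper merely asserts.
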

\begin{cor}
\label{cor:splittingacylindricallyhyperbolic}
Let $G$ be a $C^*$-simple acylindrically hyperbolic group with property-AP. Let $\mathcal{B}$ be a unital $G$-$C^*$-algebra such that $G\curvearrowright\mathcal{B}$ is non-faithful. Then, every intermediate $G$-$C^*$-subalgebra $\mathcal{C}$ with
\[C_r^*(G)\otimes_{\text{min}} \mathbb{C}\subset\mathcal{C}\subset C_r^*(G)\otimes_{\text{min}}\mathcal{B}\]
splits as a tensor product.
\end{cor}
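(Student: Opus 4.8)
The plan is to deduce Corollary~\ref{cor:splittingacylindricallyhyperbolic} as a direct application of Proposition~\ref{prop:intromainsplittingC*} with $\mathcal{A}=C_r^*(G)$, equipped with the conjugation $G$-action. Thus I must exhibit a subgroup $\Gamma\le G$ for which $C_r^*(G)$ is $\Gamma$-simple and $\Gamma\curvearrowright\mathcal{B}$ is trivial, and I must verify that $C_r^*(G)$ is $G$-simple and has property~(S). The natural candidate for $\Gamma$ is the kernel $N:=\{g\in G: g\cdot b=b\text{ for all }b\in\mathcal{B}\}$ of the action $G\curvearrowright\mathcal{B}$. This $N$ is a normal subgroup of $G$; since the action is non-faithful, $N\neq\{e\}$, and by construction $N$ acts trivially on $\mathcal{B}$, so the triviality requirement is automatic.

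For the simplicity hypotheses I would use that $C^*$-simplicity of $G$ means precisely that $C_r^*(G)$ is a simple $C^*$-algebra. A simple $C^*$-algebra has no nontrivial closed two-sided ideals whatsoever, hence in particular no nontrivial $G$-invariant or $N$-invariant ones; therefore $C_r^*(G)$ is automatically both $G$-simple and $N$-simple for the conjugation action, with no further hypotheses needed. One may also invoke Proposition~\ref{prop:acylindplump}, which records that the nontrivial normal subgroup $N$ is plump in $G$, but for the present argument plain simplicity of $C_r^*(G)$ already delivers the relative simplicity that Proposition~\ref{prop:intromainsplittingC*} asks for.

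The remaining ingredient is property~(S) for $C_r^*(G)$, and this is where the hypothesis that $G$ has the approximation property enters. By the work of Haagerup and Kraus, the approximation property of $G$ is equivalent to $C_r^*(G)$ satisfying the slice map property for the minimal tensor product against every $C^*$-algebra, which is exactly property~(S) in the sense of Subsection~\ref{subsec:propertyS}. With all hypotheses of Proposition~\ref{prop:intromainsplittingC*} thereby verified --- $C_r^*(G)$ is $G$-simple, has property~(S), and $N\le G$ is a subgroup with $C_r^*(G)$ being $N$-simple and $N\curvearrowright\mathcal{B}$ trivial --- I conclude that every intermediate $\mathcal{C}$ with $C_r^*(G)\otimes_{\text{min}}\mathbb{C}\subset\mathcal{C}\subset C_r^*(G)\otimes_{\text{min}}\mathcal{B}$ is of the form $C_r^*(G)\otimes_{\text{min}}\tilde{\mathcal{B}}$, i.e.\ it splits.

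The main point to get right is conceptual rather than computational: inside the proof of Proposition~\ref{prop:intromainsplittingC*} the slice-map-approximating completely positive maps are built as $\Phi_\lambda(x)=\sum_i\tilde a_i\,\sigma_{t_i}(x)\,\tilde a_i^*$ using group elements $t_i$ drawn from $\Gamma=N$ only, and the maps $\Phi_\lambda\otimes\mathrm{id}$ preserve $\mathcal{C}$ precisely because, for $t_i\in N$, the diagonal $G$-action $t_i\cdot(a\otimes b)=\sigma_{t_i}(a)\otimes(t_i\cdot b)$ collapses to $\sigma_{t_i}(a)\otimes b$ thanks to $N$ acting trivially on $\mathcal{B}$. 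This is where the two conditions on $N$ interlock: relative simplicity guarantees that such $N$-elementary approximations of states exist, while triviality on $\mathcal{B}$ guarantees that they respect $\mathcal{C}$. The genuinely nontrivial analytic input is property~(S), namely translating the approximation property of $G$ into the slice map property; everything else is a bookkeeping of invariance together with a direct appeal to Proposition~\ref{prop:intromainsplittingC*}.
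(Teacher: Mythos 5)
Your proof is correct and follows the same overall template as the paper: identify $\Gamma=\mathrm{Ker}(G\curvearrowright\mathcal{B})$, which is nontrivial and normal by non-faithfulness and acts trivially on $\mathcal{B}$, verify property (S) for $C_r^*(G)$ from property AP (the paper cites \cite[Proposition~2.4]{suzuki2017group} rather than Haagerup--Kraus directly, but this is the same circle of ideas), and feed everything into Proposition~\ref{prop:intromainsplittingC*}. The one place where you genuinely diverge is the verification that $C_r^*(G)$ is $\Gamma$-simple. The paper derives this from Proposition~\ref{prop:acylindplump}: acylindrical hyperbolicity plus $C^*$-simplicity make the normal subgroup $\Gamma$ plump, and plumpness gives the averaging property that yields $\Gamma$-simplicity. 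You instead observe that $C^*$-simplicity of $G$ means $C_r^*(G)$ is simple outright, and since the $G$-action here is conjugation by the unitaries $\lambda_g\in C_r^*(G)$, every closed two-sided ideal is automatically invariant under it; hence $\Gamma$-simplicity (indeed $H$-simplicity for every subgroup $H$, including $H=\{e\}$) is free. This is a legitimate and more elementary route, and it shows that for this particular corollary the acylindrical hyperbolicity and non-faithfulness hypotheses are not actually load-bearing: with $\Gamma=\{e\}$ one is back to the Zacharias--Zsid\'o setting, where simplicity plus property (S) already force splitting. What the paper's detour through plumpness buys is consistency with the general framework of the section (where $\mathcal{A}$ need not be simple, only $\Gamma$-simple, and the group elements $t_i$ in the averaging maps genuinely matter); your shortcut exploits the special feature that conjugation actions on simple algebras trivialize the invariance condition. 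Your description of the mechanism inside Proposition~\ref{prop:intromainsplittingC*} --- that $t_i\in\Gamma$ makes $\sigma_{t_i}\otimes\mathrm{id}$ coincide with the diagonal action and hence preserve $\mathcal{C}$ --- is accurate.
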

\begin{proof} Let $\Gamma=\text{Ker}(G\curvearrowright\mathcal{B})$. Then, $\Gamma\triangleleft G$ is a normal subgroup. Using Proposition~\ref{prop:acylindplump}, we see that $\Gamma$ is plump in $G$.  In particular, for every non-zero positive element in $C_r^*(G)$, we can find elements from $\Gamma$ such that after averaging by these elements, we can get arbitrarily close to a non-zero constant. Therefore, $C_r^*(G)$ is $\Gamma$-simple. Consider the inclusion
\[C_r^*(G)\otimes_{\text{min}} 1\subset\mathcal{C}\subset C_r^*(G)\otimes_{\text{min}}\mathcal{B}\]
Since $G$ has property AP, it follows from \cite[Propsition~2.4]{suzuki2017group} that $C_r^*(G)$ has property (S). It now follows from Proposition~\ref{prop:intromainsplittingC*} that $\mathcal{C}$ splits.
\end{proof}
Under additional assumptions on the group, we have more classes of examples. Recall that the amenable radical of a group $G$ is the largest amenable normal subgroup inside the group $G$. We denote it by $R_a(G)$.
\begin{cor}
Let $G$ be a non-amenable group with a nontrivial amenable radical $R_a(G)$. Let $\partial G$ be a $G$-boundary. Let $\mathcal{A}$ be a $H$-simple unital $G$-$C^*$-algebra with property (S), where $H\le R_a(G)$. Then, every intermediate $G$-$C^*$-algebra $\mathcal{C}$ of the form
\[\mathcal{A}\subset\mathcal{C}\subset \mathcal{A}\otimes_{\text{min}} C(\partial G)\]
splits.
\end{cor}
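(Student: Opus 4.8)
The plan is to reduce the statement to a direct application of Proposition~\ref{prop:intromainsplittingC*}, taking $\mathcal{B} = C(\partial G)$ and choosing the distinguished subgroup $\Gamma$ appearing there to be $H$ itself. To invoke that proposition I must verify its three structural hypotheses on $\mathcal{A}$ and $\Gamma$: that $\mathcal{A}$ is $G$-simple and has property (S); that $\mathcal{A}$ is $\Gamma$-simple for some $\Gamma \le G$; and that this same $\Gamma$ acts trivially on $\mathcal{B}$. Once these are in place, the conclusion of the proposition is precisely that $\mathcal{C}$ splits.

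First I would observe that $H$-simplicity upgrades to $G$-simplicity for free. Indeed, any $G$-invariant closed two-sided ideal $J \subset \mathcal{A}$ is in particular $H$-invariant, since $H \le G$; as $\mathcal{A}$ is $H$-simple, $J$ is either $(0)$ or $\mathcal{A}$. Thus $\mathcal{A}$ is $G$-simple, and combined with the standing hypothesis that $\mathcal{A}$ has property (S), the first requirement is met. Taking $\Gamma = H$ then also supplies the $\Gamma$-simplicity, which is exactly the $H$-simplicity we are given.

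The one genuinely substantive point is that $H$ acts trivially on $\partial G$, and this is where the hypotheses $H \le R_a(G)$ and ``$\partial G$ a $G$-boundary'' enter. The key fact is that the amenable radical is contained in the kernel of the action on the Furstenberg boundary, i.e. $R_a(G) \le \Ker(G \curvearrowright \partial_F G)$, by \cite[Corollary 8]{furman_kernel}. Since every $G$-boundary $\partial G$ is a $G$-factor of $\partial_F G$, triviality of the action on $\partial_F G$ descends along the factor map to triviality on $\partial G$; hence $R_a(G)$, and in particular $H$, acts trivially on $\partial G$. Passing to the function algebra, $H \curvearrowright C(\partial G)$ is the trivial action, which is the last hypothesis required.

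With all three hypotheses verified, Proposition~\ref{prop:intromainsplittingC*} yields that $\mathcal{C} = \mathcal{A} \otimes_{\text{min}} \tilde{\mathcal{B}}$ for a $G$-invariant $C^*$-subalgebra $\tilde{\mathcal{B}} \subseteq C(\partial G)$; that is, $\mathcal{C}$ splits. I do not anticipate any serious obstacle here, since the argument is an assembly of the splitting proposition with the ``amenable radical acts trivially on boundaries'' fact. The only step demanding any care is confirming that triviality on the Furstenberg boundary descends to the factor $\partial G$, and this is immediate from the functoriality of $G$-factor maps.
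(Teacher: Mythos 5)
Your proposal is correct and follows essentially the same route as the paper: both reduce to Proposition~\ref{prop:intromainsplittingC*} with $\Gamma=H$, using the fact that the amenable radical lies in the kernel of the action on $\partial_F G$ (the paper cites \cite[Proposition 7]{furman_kernel} and \cite[Proposition~2.8]{BKKO} for the equality $R_a(G)=\mathrm{Ker}(G\curvearrowright\partial_FG)$) and that triviality descends to any $G$-boundary as a factor of $\partial_FG$. Your explicit verification that $H$-simplicity implies $G$-simplicity is a point the paper leaves implicit in this proof but records elsewhere, so there is nothing to correct.
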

\begin{proof}
It follows from \cite[Proposition 7]{furman_kernel} (see also \cite[Proposition~2.8]{BKKO}) that $R_a(G)=\text{Ker}(G\curvearrowright\partial_FG)$. Since $H\le R_a(G)$, we see that $H\le \text{Ker}(G\curvearrowright\partial_FG)$. Therefore, the action $H\curvearrowright \partial_FG$ is trivial. Since any $G$-boundary $\partial G$ is a $G$-factor of $\partial_FG$, it follows that $H$ acts trivially on $\partial G$. The claim follows by appealing to Proposition~\ref{prop:intromainsplittingC*}.
\end{proof}
\section{Intermediate Subalgebras of the Tensor Products}
\label{sec:intermediate}
This section is dedicated to proving Theorem~\ref{thm:introcontinuousfactortheorem}. Our strategy revolves around the interplay between the operator algebraic tools pitched together with the boundary actions. We have already established the operator algebraic splitting theorem in Section~\ref{sec:simpleinvariantC*-algebras}. We first show that the intermediate algebra must contain a piece of the boundary under our assumption.
\begin{proposition}
\label{prop:invariantinclusion}
Let $X$ be a $G$-space, and $\mathcal{B}$ be a unital $G$-$C^*$-algebra. Let $\mathcal{C}\subset C(X,\mathcal{B})$ be a $G$-invariant $C^*$-subalgebra such that  $$\Tilde{\mathcal{B}}\subset\langle \mathcal{C}, C(X)\rangle,$$
where $\tilde{\mathcal{B}}\subset \mathcal{B}$ is a $G$-invariant $C^*$-subalgebra and $\langle \mathcal{C}, C(X)\rangle$ denotes the $C^*$-subalgebra of $C(X,\mathcal{B})$ generated by $\mathcal{C}$ and $C(X)$. If there exists $\Gamma\le G$ such that \textcolor{red}{$\Gamma\curvearrowright X$} is strongly proximal and $\Gamma\curvearrowright\Tilde{\mathcal{B}}$ is trivial, then $\tilde{\mathcal{B}}\subset \mathcal{C}$.
\begin{proof}
Let $\varphi$ be a bounded linear functional on $C(X,\mathcal{B})$ such that $\varphi|_{\mathcal{C}}=0$. We show that $\varphi|_{\Tilde{\mathcal{B}}}=0$ from whence the claim will follow. Towards this end, let us write $\varphi=\sum_{i=1}^4c_i\varphi_i$, where $\varphi_i\in S(C(X,\mathcal{B}))$ and $c_i\in\mathbb{C}$, for each $i=1,2,3,4$. Now, since $\Gamma\curvearrowright X$ is strongly proximal, we can find a net $s_j\in \Gamma$ and some $x\in X$ such that
\[s_j.\left(\frac{1}{4}\sum_{i=1}^4\varphi_i|_{C(X)}\right)\xrightarrow[]{weak^*}\delta_x,~i=1,2,3,4.\]
Upon passing to a subnet four times if required, we can assume that $s_j.\varphi_i\xrightarrow[]{weak^*}\psi_i$ for each $i=1,2,3$ and $4$. It now follows that for $\psi:=\sum_{i=1}^4c_i\psi_i$, then $\psi|_{C(X)}=\left(\sum_{i=1}^4c_i\right)\delta_x$, where $\psi_i|_{C(X)}=\delta_x$. It is, of course, clear that $s_j.\varphi\xrightarrow[]{weak^*}\psi$. Moreover, since $\varphi(a)=0$ for all $a\in\mathcal{C}$ and $\mathcal{C}$ is $G$-invariant, we see that
\begin{equation}
 \label{eq:zero}
\psi(a)=\lim_j\varphi(s_j^{-1}a)=0,~\forall a\in\mathcal{C}.
\end{equation}
Now, let $b\in \Tilde{\mathcal{B}}$. Since $\Tilde{\mathcal{B}}\subset \langle \mathcal{C}, C(X)\rangle$, we can write
\begin{align*}
    b\approx \sum_{k=1}^ma_kh_k,~a_k\in\mathcal{C}, h_k\in C(X).
\end{align*}
Since $\Gamma\curvearrowright \tilde{\mathcal{B}}$ is trivial, we see that
\begin{align*}
    b=s_j^{-1}.b\approx \sum_{k=1}^m\left(s_j^{-1}.a_k\right)\left(s_j^{-1}.h_k\right).
\end{align*}
Applying $\varphi$ on both sides, we see that
\begin{align*}
    \varphi(b)&=\lim_j\varphi(s_j^{-1}.b)\\&\approx \lim_j\sum_{k=1}^m\varphi(\left(s_j^{-1}.a_k\right)\left(s_j^{-1}.h_k\right))\\&=\lim_j\sum_{k=1}^ms_j.\varphi(a_kh_k)\\&=\sum_{i=1}^4\sum_{k=1}^mc_i\psi_i(a_kh_k)\\&=\sum_{i=1}^4\sum_{k=1}^mc_i\psi_i(a_k)h_k(x)&\text{($\psi_i|_{C(X)}=\delta_x$)}\\&=\sum_{k=1}^m\left(\sum_{i=1}^4c_i\psi_i(a_k)\right)h_k(x)\\&=\sum_{k=1}^m\psi(a_k)h_k(x)\\&=0&\text{(equation~\eqref{eq:zero})}
\end{align*}
The claim follows.
\end{proof}
\end{proposition}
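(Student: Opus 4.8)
The plan is to prove the inclusion by duality. Since $\mathcal{C}$ is norm-closed, the Hahn--Banach theorem reduces the claim to showing that every bounded linear functional $\varphi$ on $C(X,\mathcal{B})$ with $\varphi|_{\mathcal{C}}=0$ automatically kills $\tilde{\mathcal{B}}$ as well; this would force $\tilde{\mathcal{B}}\subset\overline{\mathcal{C}}=\mathcal{C}$. So I would fix such a $\varphi$ and write it as a combination $\varphi=\sum_{i=1}^4 c_i\varphi_i$ of four states $\varphi_i\in S(C(X,\mathcal{B}))$ and scalars $c_i\in\mathbb{C}$ (the standard decomposition of a bounded functional into positive parts). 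The guiding idea is to translate $\varphi$ by a carefully chosen net in $\Gamma$ so that its restriction to the central copy of $C(X)$ collapses onto a single point mass, and then use that $\Gamma$ acts trivially on $\tilde{\mathcal{B}}$ to transport the vanishing on $\mathcal{C}$ over to $\tilde{\mathcal{B}}$.

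First I would apply strong proximality of $\Gamma\curvearrowright X$ to the probability measure $\nu:=\tfrac14\sum_{i=1}^4\varphi_i|_{C(X)}\in\text{Prob}(X)$, obtaining a net $s_j\in\Gamma$ and a point $x\in X$ with $s_j.\nu\to\delta_x$ in the weak$^*$-topology. Since $\delta_x$ is an extreme point of $\text{Prob}(X)$ and $\nu$ is the average of the $\varphi_i|_{C(X)}$, any weak$^*$-cluster point of the individual components $s_j.\varphi_i|_{C(X)}$ must again equal $\delta_x$; passing to a common subnet and using weak$^*$-compactness of $S(C(X,\mathcal{B}))$, I obtain limits $s_j.\varphi_i\to\psi_i$ with $\psi_i|_{C(X)}=\delta_x$ for every $i$. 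Setting $\psi:=\sum_i c_i\psi_i$ we have $s_j.\varphi\to\psi$, and because $\mathcal{C}$ is $G$-invariant and $\varphi|_{\mathcal{C}}=0$, for each $a\in\mathcal{C}$ we get $\psi(a)=\lim_j\varphi(s_j^{-1}.a)=0$, using $s_j^{-1}.a\in\mathcal{C}$.

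Next, given $b\in\tilde{\mathcal{B}}$, I would invoke the hypothesis $\tilde{\mathcal{B}}\subset\langle\mathcal{C},C(X)\rangle$ together with the centrality of $C(X)=C(X)\otimes 1$ in $C(X,\mathcal{B})$ to approximate $b\approx\sum_k a_k h_k$ with $a_k\in\mathcal{C}$ and $h_k\in C(X)$. Triviality of $\Gamma\curvearrowright\tilde{\mathcal{B}}$ gives $b=s_j^{-1}.b$ for all $j$, so applying $\varphi$ and using that the $s_j$ act by isometric $*$-automorphisms yields $\varphi(b)\approx\sum_k(s_j.\varphi)(a_k h_k)\to\sum_k\psi(a_k h_k)$. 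The decisive step is the factorization $\psi_i(a_k h_k)=\psi_i(a_k)\,h_k(x)$: because $\psi_i|_{C(X)}=\delta_x$ is multiplicative, each $h_k$ lies in the multiplicative domain of the state $\psi_i$, so $\psi_i(a_k h_k)=\psi_i(a_k)\psi_i(h_k)=\psi_i(a_k)h_k(x)$. Summing over $i$ gives $\psi(a_k h_k)=\psi(a_k)h_k(x)=0$ since $\psi(a_k)=0$, and letting the approximation error tend to $0$ we conclude $\varphi(b)=0$.

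The main obstacle is ensuring the two limiting procedures in the last paragraph are compatible: the translation $s_j.\varphi\to\psi$ and the approximation of $b$ inside $\langle\mathcal{C},C(X)\rangle$ must interact cleanly, which works precisely because $\|b-\sum_k a_k h_k\|$ is unchanged by the isometric action and hence uniform in $j$, and because the factorization through $\delta_x$ is legitimate. The latter is exactly where centrality of $C(X)$ in the tensor product, combined with $\psi_i|_{C(X)}$ being a character, is indispensable. A secondary technical point to justify carefully is that weak$^*$-convergence of the \emph{averaged} measure $s_j.\nu\to\delta_x$ forces each summand $s_j.\varphi_i|_{C(X)}$ to converge to the \emph{same} Dirac mass; this is where extremality of $\delta_x$ in $\text{Prob}(X)$ is used.
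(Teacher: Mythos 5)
Your proof is correct and follows essentially the same route as the paper's: Hahn--Banach duality, decomposition of $\varphi$ into four states, strong proximality applied to the averaged restriction to $C(X)$, a subnet so that $s_j.\varphi_i\to\psi_i$ with $\psi_i|_{C(X)}=\delta_x$, and the factorization $\psi_i(a_kh_k)=\psi_i(a_k)h_k(x)$ to transport vanishing from $\mathcal{C}$ to $\tilde{\mathcal{B}}$. You actually supply two justifications the paper leaves implicit --- the extremality of $\delta_x$ forcing each component $s_j.\varphi_i|_{C(X)}$ to converge to the same Dirac mass, and the multiplicative-domain argument for the factorization --- both of which are correct and welcome.
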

\begin{cor}
\label{cor:invariantsplitting}
Let $X$ be a $G$-boundary. Let $Y$ be a minimal $H$-space. Consider the product action $G\times H\curvearrowright X\times Y$. Let $\mathcal{C}$ be a $G\times H$-invariant $C^*$-subalgebra of $C(X,C(Y))=C(X)\otimes_{\text{min}} C(Y)$. Then, $\mathcal{C}=C(X', C(Y'))$, where $X'$ and $Y'$ are $G$ and $H$-factors of $X$ and $Y$ respectively. In other words, every $G\times H$-factor of $X\times Y$ splits as $X'\times Y'$ for some $G$-factor $X'$ of $X$ and some $H$-factor $Y'$ of $Y$.
\end{cor}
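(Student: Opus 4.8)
The plan is to squeeze the (a priori arbitrary) invariant subalgebra $\mathcal{C}$ between two copies of a single boundary piece $C(Y')$ and then invoke the minimal-subgroup splitting result. By Corollary~\ref{cor: subalgebras in C(X) comes from an equivalence relation} it suffices to prove the displayed $C^*$-algebraic statement $\mathcal{C}=C(X')\otimes_{\text{min}}C(Y')$; the "in other words" reformulation then follows by writing $\mathcal{C}=C(W)$ for the $G\times H$-factor $W$ of $X\times Y$ it determines. As a first move I would pass to the generated algebra $\langle\mathcal{C},C(X)\rangle$, a $G\times H$-invariant intermediate algebra $C(X)\subset\langle\mathcal{C},C(X)\rangle\subset C(X)\otimes_{\text{min}}C(Y)$. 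Restricting the product action to the subgroup $G\times\{e\}$, this subgroup acts minimally on $X$ (since a $G$-boundary is in particular minimal) and trivially on $Y$; thus Corollary~\ref{cor:splitting}, read through Proposition~\ref{prop: equivalence relation formulation on splitting}, yields a splitting $\langle\mathcal{C},C(X)\rangle=C(X)\otimes_{\text{min}}C(Y')$ for some factor $Y'$ of $Y$. Because $G$ acts trivially on $Y$, this $Y'$ is in fact an $H$-factor, and it is minimal since $Y$ is. In particular this already gives the upper bound $\mathcal{C}\subset C(X)\otimes_{\text{min}}C(Y')$.

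The crux is the matching lower bound, and this is where the strong proximality of $X$ (rather than mere minimality) is used. Set $\tilde{\mathcal{B}}:=\langle\mathcal{C},C(X)\rangle\cap(1\otimes_{\text{min}}C(Y))$. Using the product description from the previous step, a direct computation in $C(X\times Y)$ gives $\tilde{\mathcal{B}}=\bigl(C(X)\otimes_{\text{min}}C(Y')\bigr)\cap\bigl(1\otimes_{\text{min}}C(Y)\bigr)=1\otimes_{\text{min}}C(Y')$, a unital $G\times H$-invariant subalgebra of $C(Y)$ (with $G$ acting trivially on it) that is contained in $\langle\mathcal{C},C(X)\rangle$. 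Taking $\Gamma=G\times\{e\}$, which acts strongly proximally on $X$ and trivially on $\tilde{\mathcal{B}}$, Proposition~\ref{prop:invariantinclusion} forces $\tilde{\mathcal{B}}=C(Y')\subset\mathcal{C}$.

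Combining the two bounds, $\mathcal{C}$ is an intermediate $G\times H$-invariant algebra $C(Y')\subset\mathcal{C}\subset C(X)\otimes_{\text{min}}C(Y')$. Now the complementary subgroup $\{e\}\times H$ acts minimally on $Y'$ (a factor of the minimal $H$-space $Y$) and trivially on $X$, so a final application of Corollary~\ref{cor:splitting}, again via Proposition~\ref{prop: equivalence relation formulation on splitting}, produces $\mathcal{C}=C(X')\otimes_{\text{min}}C(Y')$ for some factor $X'$ of $X$. Triviality of the $H$-action on $X$ makes $X'$ a genuine $G$-factor of $X$, which is exactly the claimed splitting; identifying $\mathcal{C}=C(X',C(Y'))$ finishes the proof.

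I expect the second step to be the real obstacle: one must verify that the intersection $\tilde{\mathcal{B}}$ collapses \emph{exactly} to $C(Y')$ and then check every hypothesis of Proposition~\ref{prop:invariantinclusion}, the essential point being that strong proximality of $X$ is what lets the boundary piece $C(Y')$ be pushed back down into $\mathcal{C}$. The first and third steps are bookkeeping applications of Corollary~\ref{cor:splitting} with the two coordinate subgroups; the only care needed there is to track which of $X,Y$ plays the role of the minimal factor and which the trivial one in the statement of that corollary.
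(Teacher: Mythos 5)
Your proof is correct and follows the same three-step architecture as the paper's: split $\langle\mathcal{C},C(X)\rangle$ using the minimal $G$-action on $X$ and trivial $G$-action on $Y$, push $C(Y')$ down into $\mathcal{C}$ via Proposition~\ref{prop:invariantinclusion} using strong proximality, then split once more using the minimal $H$-action on $Y'$. The only difference is that you invoke Corollary~\ref{cor:splitting} (via Proposition~\ref{prop: equivalence relation formulation on splitting}) for the two splitting steps where the paper cites Proposition~\ref{prop:intromainsplittingC*}; since everything here is commutative, both tools apply and yours is the slightly more elementary choice.
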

\begin{proof}
For the sake of simplicity, let us write $\mathcal{B}=C(Y)$.
Note that $$C(X)\subset \langle \mathcal{C}, C(X)\rangle \subset C(X,\mathcal{B}).$$
Applying Proposition~\ref{prop:intromainsplittingC*} for $\Gamma=G$, we see that $$\langle \mathcal{C}, C(X)\rangle=C(X,\Tilde{\mathcal{B}}),$$
where $\Tilde{\mathcal{B}}\subset\mathcal{B}$ is an $H$-invariant subalgebra and hence can be denoted by $\Tilde{\mathcal{B}}=C(Y')$ for some $H$-factor $Y'$ of $Y$. Note that $H\curvearrowright Y'$ is still minimal.
Using Proposition~\ref{prop:invariantinclusion}, we see that $\Tilde{\mathcal{B}}\subset\mathcal{C}$.  Thus, $$\Tilde{\mathcal{B}}\subset \mathcal{C} \subset C(X,\Tilde{\mathcal{B}}).$$ Again, applying Proposition~\ref{prop:intromainsplittingC*} for $\Gamma=H$, we see that $$\mathcal{C} =C(X',\Tilde{\mathcal{B}})=C(X', C(Y')).$$
\end{proof}
The strong proximality assumption in Corollary~\ref{cor:invariantsplitting} could not be dropped in general, as shown by the following example due to Zhengxing Lian. We thank him for allowing us to include it here.
\begin{example}
\label{ex:notproximal}
Let $G\curvearrowright X$ and $H\curvearrowright Y$ be irrational rotations defined by the same angle. Let $Z$ be defined as the $G\times H$-factor of $X\times Y$ by modding out the relation $(x,y)\sim (x',y')$ iff $d(x, x')=\frac{1}{2}$ and $d(y, y')=\frac{1}{2}$, unless $(x,y)=(x', y')$. Here, we write $X=[0,1]~mod~1$ and define $d(x, y)=\inf\{|x-y|, 1-|x-y|\}$ for any $x,y\in X$. To see $Z\vee X=X\times Y$, we check that the algebra generated by $C(Z)$ and $C(X)$ separate points in $X\times Y$. Indeed, for any $(x, y)\neq (x', y')\in X\times Y$, if $x\neq x'$, then we can use elements in $C(X)$ to separate $(x, y)$ from $(x', y')$; we may assume that $x=x'$ without loss of generality, then note that $(x, y)\not\sim(x, y')$ for any $y\neq y'$, hence elements in $C(Z)$ can be used to separate them. Similarly, we can show $Z\vee Y=X\times Y$. This means if $Z$ splits as a product, then $Z=X\times Y$. However, it is clear that $Z\neq X\times Y$.
\end{example}
\subsection{Intermediate Factor theorem}
\label{subsec:IFT}
We prove a continuous version of the intermediate factor theorem for a product of discrete groups acting on a product space, i.e., Theorem \ref{thm:introcontinuousfactortheorem}. This is analogous to the measurable version proved in \cite{bader2006factor}. Note that \cite[Theorem~1.9]{bader2006factor} is true for locally compact groups in general. We prove the following more general version in which only one boundary action for the product factor is assumed.
\begin{thm}
\label{thm:continuousfactortheorem}
Let $G=\Gamma_1\times \Gamma_2$ be a product of two discrete groups. Let $\Gamma_1\curvearrowright \partial\Gamma_1$ be a $\Gamma_1$-boundary and $\Gamma_2\curvearrowright Z$ be a continuous action on a compact Hausdorff space $Z$. Consider the product action $G\curvearrowright \partial\Gamma_1\times Z$. Let $G\curvearrowright X$ be a continuous action on a compact Hausdorff space. Moreover, assume that the diagonal actions $\Gamma_1\curvearrowright X\times \partial\Gamma_1$ and $\Gamma_2\curvearrowright X\times Z$ are both minimal. Then, every intermediate $G$-$C^*$-subalgebra $\mathcal{C}$ with
\[C(X)\subset\mathcal{C}\subset C(X)\otimes_{\text{min}} C(\partial\Gamma_1\times Z)\]
is a tensor product of the form $ C(X)\otimes_{\text{min}} C(B)$, where $B$ is a $G$-factor of $\partial\Gamma_1\times Z$ which splits as a product of a $\Gamma_1$-factor of $\partial\Gamma_1$ and a $\Gamma_2$-factor of $Z$.
\end{thm}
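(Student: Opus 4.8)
The plan is to peel off the two fibre directions one at a time, using the two minimality hypotheses to run the splitting Proposition~\ref{prop:intromainsplittingC*} and the single boundary (strong proximality) hypothesis to run the absorption Proposition~\ref{prop:invariantinclusion}. Throughout I would identify $C(X)\otimes_{\text{min}} C(\partial\Gamma_1\times Z)$ with $C(X\times\partial\Gamma_1\times Z)$ and freely regroup the three commutative tensor factors.

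First I would enlarge $\mathcal{C}$ by the boundary coordinate. Set $\mathcal{D}:=\langle \mathcal{C}, C(\partial\Gamma_1)\rangle$; since $C(X)\subset\mathcal{C}$, this is the algebra generated by $\mathcal{C}$ and $C(X\times\partial\Gamma_1)$, so that $C(X\times\partial\Gamma_1)\subset\mathcal{D}\subset C(X\times\partial\Gamma_1)\otimes_{\text{min}} C(Z)$. Now apply Proposition~\ref{prop:intromainsplittingC*} with $\mathcal{A}=C(X\times\partial\Gamma_1)$ and $\mathcal{B}=C(Z)$, using the subgroup $\Gamma=\Gamma_1$: here $\mathcal{A}$ is $G$-simple because $\Gamma_1\curvearrowright X\times\partial\Gamma_1$ is minimal (whence the larger $G$-action is too), it is $\Gamma_1$-simple for the same reason, it has property (S) as it is commutative, and $\Gamma_1$ acts trivially on $C(Z)$. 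Hence $\mathcal{D}=C(X\times\partial\Gamma_1)\otimes_{\text{min}}\tilde{\mathcal{B}}$ for a $G$-invariant subalgebra $\tilde{\mathcal{B}}\subset C(Z)$; since $\Gamma_1$ acts trivially on $Z$, Corollary~\ref{cor: subalgebras in C(X) comes from an equivalence relation} gives $\tilde{\mathcal{B}}=C(Z_0)$ for a $\Gamma_2$-factor $Z_0$ of $Z$, so $\mathcal{D}=C(X\times\partial\Gamma_1\times Z_0)$.

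The decisive step is to show that the $Z_0$-direction, although only \emph{added} when forming $\mathcal{D}$, already lies in $\mathcal{C}$. Here I would regroup the ambient algebra as $C(\partial\Gamma_1, C(X\times Z))$, taking $\partial\Gamma_1$ as the base, and apply Proposition~\ref{prop:invariantinclusion} with $\tilde{\mathcal{B}}=C(Z_0)\subset C(X\times Z)$ and $\Gamma=\Gamma_1$: the action $\Gamma_1\curvearrowright\partial\Gamma_1$ is strongly proximal (it is a boundary), $\Gamma_1$ acts trivially on $C(Z_0)$, and $C(Z_0)\subset\mathcal{D}=\langle\mathcal{C}, C(\partial\Gamma_1)\rangle$ by the previous step. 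The conclusion is $C(Z_0)\subset\mathcal{C}$. This is the one and only place where strong proximality is invoked, and the main obstacle is precisely that strong proximality is available only for $\partial\Gamma_1$ under $\Gamma_1$ and not for the combined base $X\times\partial\Gamma_1$; the argument therefore hinges on choosing $\partial\Gamma_1$ alone as the base and pairing it with the subalgebra $C(Z_0)$, on which $\Gamma_1$ acts trivially.

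Finally I would peel off the boundary direction. Combining $C(X)\subset\mathcal{C}$ with $C(Z_0)\subset\mathcal{C}$ gives $C(X\times Z_0)\subset\mathcal{C}$, while $\mathcal{C}\subset\mathcal{D}=C(X\times Z_0)\otimes_{\text{min}} C(\partial\Gamma_1)$; thus $C(X\times Z_0)\subset\mathcal{C}\subset C(X\times Z_0)\otimes_{\text{min}} C(\partial\Gamma_1)$. I would then apply Proposition~\ref{prop:intromainsplittingC*} once more, now with $\mathcal{A}=C(X\times Z_0)$, $\mathcal{B}=C(\partial\Gamma_1)$ and $\Gamma=\Gamma_2$: the space $X\times Z_0$ is a $\Gamma_2$-factor of $X\times Z$ (via $\mathrm{id}_X$ times the factor map $Z\to Z_0$), so $\Gamma_2\curvearrowright X\times Z_0$ is minimal, yielding both $G$-simplicity and $\Gamma_2$-simplicity of $\mathcal{A}$, while $\Gamma_2$ acts trivially on $C(\partial\Gamma_1)$. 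Hence $\mathcal{C}=C(X\times Z_0)\otimes_{\text{min}} C(Y_1)$ for a $\Gamma_1$-factor $Y_1$ of $\partial\Gamma_1$, that is $\mathcal{C}=C(X)\otimes_{\text{min}} C(Y_1\times Z_0)$. Setting $B=Y_1\times Z_0$ then gives the claimed splitting. Beyond the strong proximality step flagged above, I expect the only delicate bookkeeping to be the repeated regrouping of the commutative tensor factors and the routine verification that factors of the minimal systems $X\times\partial\Gamma_1$ and $X\times Z$ remain minimal.
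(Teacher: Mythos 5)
Your proposal is correct and follows essentially the same route as the paper's proof: adjoin $C(\partial\Gamma_1)$ and split off a $\Gamma_2$-factor of $Z$ via Proposition~\ref{prop:intromainsplittingC*} applied to $\Gamma_1\curvearrowright X\times\partial\Gamma_1$, absorb that factor into $\mathcal{C}$ via Proposition~\ref{prop:invariantinclusion} using strong proximality of $\Gamma_1\curvearrowright\partial\Gamma_1$, then split off a $\Gamma_1$-factor of $\partial\Gamma_1$ by a second application of Proposition~\ref{prop:intromainsplittingC*} with $\Gamma_2\curvearrowright X\times Z$. Your extra care in verifying the minimality of $\Gamma_2\curvearrowright X\times Z_0$ and in identifying the correct base $\partial\Gamma_1$ for the absorption step matches the paper's argument exactly.
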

Clearly, Theorem \ref{thm:introcontinuousfactortheorem} is a particular case of the above theorem by taking $\Gamma_2\curvearrowright Z$ to be also a boundary action $\Gamma_2\curvearrowright\partial \Gamma_2$.

Let us briefly compare the assumptions above to their measurable counterparts in \cite[Theorem~1.9]{bader2006factor}. In there, $(X,\xi)$ is a measure-preserving $G=G_1\times G_2$-space such that the action $G_i\curvearrowright (X,\xi)$ is ergodic for each $i=1,2$. For $\mu=\mu_1\times\mu_2\in \text{Prob}(G_1)\times\text{Prob}(G_2)$, it follows from \cite[Corollary~3.2]{bader2006factor} that the corresponding Poisson boundary $(B,\nu_B)$ is of the form $(B_1,\nu_{B_1})\times (B_2,\nu_{B_2})$ such that $G_i\curvearrowright (B_{3-i},\nu_{B_{3-i}})$ is trivial. Moreover, since $G_i\curvearrowright (B_i,\nu_{B_i})$ is ergodic, it follows from \cite[Corollary~2.18]{bader2006factor} that the action $G_i\curvearrowright (X\times B_i,\xi\times\nu_{B_i})$ is ergodic.

Being ergodic in the measurable setup is equivalent to saying that the corresponding $L^{\infty}$-space has no non-trivial invariant WOT-closed ideals. Correspondingly, the topological analog is that the action is minimal, which means that the space of continuous functions does not have any non-trivial invariant closed ideals. This naturally leads us to assume that the action $\Gamma_i\curvearrowright X\times \partial\Gamma_i$ is minimal. Of course, the topological boundary is the natural counterpart in the continuous setup. Consequently, we can consider the above theorem as a topological variant of the IFT (\cite[Theorem~1.9]{bader2006factor}).
\begin{proof}[Proof of Theorem~\ref{thm:continuousfactortheorem}]
Let $\mathcal{C}_1=\left\langle\mathcal{C}, C(\partial\Gamma_1)\right\rangle$. Then, we see that
\[C(X)\otimes_{\text{min}} C(\partial\Gamma_1)\subset\mathcal{C}_1\subset\left(C(X)\otimes_{\text{min}} C(\partial\Gamma_1)\right)\otimes_{\text{min}} C(Z) \]
Note that $\Gamma_1\curvearrowright Z$ is trivial. Since $\Gamma_1\curvearrowright X\times \partial \Gamma_1$ is minimal and $\Gamma_1\curvearrowright Z$ is trivial, we deduce from Proposition~\ref{prop:intromainsplittingC*} that
\[\mathcal{C}_1=\left(C(X)\otimes_{\text{min}} C(\partial\Gamma_1)\right)\otimes_{\text{min}} C(B_2),\]
where $B_2$ is a $\Gamma_2$-factor of $Z$. In particular, this implies that
\[C(B_2)\subset \langle\mathcal{C}, C(\partial\Gamma_1)\rangle.\]
Since $\Gamma_1\curvearrowright B_2$ is trivial and $\Gamma_1\curvearrowright\partial \Gamma_1$ is a boundary action, using Proposition~\ref{prop:invariantinclusion}, we get that $C(B_2)\subset\mathcal{C}$. Therefore, it follows that
\[C(X)\otimes_{\text{min}} C(B_2)\subset\mathcal{C}\subset \left(C(X)\otimes_{\text{min}} C(B_2)\right)\otimes_{\text{min}} C(\partial\Gamma_1)\]
Since $\Gamma_2\curvearrowright X\times Z$ is minimal, using Proposition~\ref{prop:intromainsplittingC*} again, we see that
\[\mathcal{C}=\left(C(X)\otimes_{\text{min}} C(B_1)\right)\otimes_{\text{min}} C(B_2),\]
where $B_1$ is a $\Gamma_1$-factor of $\partial\Gamma_1$. Letting $B=B_1\times B_2$, the claim follows.
\end{proof}

Note that the assumptions of minimality of $\Gamma_1\curvearrowright X\times \partial\Gamma_1$ and $\Gamma_2\curvearrowright X\times Z$ force the diagonal action $G\curvearrowright X\times (\partial\Gamma_1\times Z)$ to be minimal. In general, we need the ambient space to be minimal (see Proposition~\ref{prop:condforsplitting} and the example following it).
\begin{remark}
The above proof as it is goes through for $G=\Gamma_1*\Gamma_2$ with exactly the same assumptions. We leave the details to the reader.
\end{remark}
Here, we present some examples to which Theorem \ref{thm:continuousfactortheorem} could be applied.
\begin{example}
Let $G=\Gamma_1\times\Gamma_2$ be a $C^*$-simple group. It follows from \cite[Theorem~1.4]{BKKO} that $\Gamma_1$ and $\Gamma_2$ are both $C^*$-simple. Let $\partial_F\Gamma_i$ be the corresponding Furstenberg boundaries for $i=1,2$. Using \cite[Lemma~5.2]{BKKO}, we see that the action $\Gamma_i\curvearrowright\partial_F\Gamma_i$ extends to an action $\Gamma\curvearrowright\partial_F\Gamma_i$ such that $\Gamma_{3-i}\curvearrowright\partial_F\Gamma_i$ is trivial for $i=1,2$. Now, for the diagonal action $G\curvearrowright\partial_F\Gamma_1\times \partial_F\Gamma_2$, any $G$-space $X$ under the additional assumption that $\Gamma_i\curvearrowright X\times\partial_F\Gamma_i$ is minimal, every intermediate $G$-$C^*$-subalgebra $\mathcal{C}$ with
\[C(X)\subset\mathcal{C}\subset C(X)\otimes_{\text{min}} C(\partial_F\Gamma_1\times \partial_F\Gamma_2)\]
is a tensor product.
\end{example}
Here is a more concrete example.
\begin{example}
    Let $\alpha: \mathbb{Z}\curvearrowright X$ be any minimal distal action, e.g., an irrational rotation on the unit circle. Assume that there is a surjective group homomorphism $\phi: \Gamma_1\times \Gamma_2\twoheadrightarrow \mathbb{Z}$ such that $\phi|_{\Gamma_i}$ are still surjective for $i=1,2$. Then we consider $\Gamma\curvearrowright X$ by $\alpha\circ \phi$.
    For example, take $\Gamma_1=F_2=\langle a,b\rangle$ and $\Gamma_2=F_2=\langle c,d\rangle$ and define the homomorphism  $\phi: \Gamma_1\times\Gamma_2\twoheadrightarrow \Gamma_1^{ab}\times \Gamma_2^{ab}\cong \mathbb{Z}^4=\langle \bar{a},\bar{b},\bar{c},\bar{d}\rangle\overset{\pi}{\twoheadrightarrow}\mathbb{Z}$ as the composition of taking abelization and $\pi$, where $\pi$ is defined by $\pi(n_1\bar{a}+n_2\bar{b}+n_3\bar{c}+n_4\bar{d})=n_1+n_3$ for all $n_1,\ldots, n_4\in\mathbb{Z}$.
    Note that as proximal actions, $\Gamma_i\curvearrowright \partial_F\Gamma_i$ are minimal and weakly mixing, see e.g., \cite[2nd paragraph on P. 176]{KLbook}, hence they are disjoint from the minimal distal actions $\Gamma_i\curvearrowright X$. Therefore, $\Gamma_i\curvearrowright X\times \partial_F\Gamma_i$ are minimal for $i=1,2$.
\end{example}
For a product group, the corresponding $\mu$-boundary (for a product measure $\mu$) is the product of $\mu$-boundaries as mentioned above (and shown in \cite[Corollary~3.2]{bader2006factor}). The topological analog of this result does not hold. 
\begin{proposition}
\label{prop:notproductboundary}
Let $G$ and $H$ be two discrete groups. Let $\partial_F(G\times H)$ denote the corresponding Furstenberg boundary. If $\partial_F(G\times H)$ is a product space equipped with the product action, then either $G$ or $H$ is amenable. 
\begin{proof}
Assume that $\partial_F(G\times H)$ is a product space equipped with the product action. Given any $G$-boundary $X$, the action $G\curvearrowright X$ can be extended to an action $G\times H \curvearrowright X$ such that $H\le \text{Ker}(G\times H\curvearrowright X)$ (see \cite[Lemma~5.2]{BKKO}). Therefore, it follows that 
\[C\left(\partial_F(G\times H)\right)^G=C(\partial_FH)\text{ and } C\left(\partial_F(G\times H)\right)^H=C(\partial_FG).\]
Consequently, we see that $\partial_F(G\times H)=\partial_FG\times \partial_FH$. Since $\partial_F(G\times H)$ is an extremally disconnected space (see \cite[Proposition~2.4]{BKKO}), we see that $\partial_FG\times \partial_FH$ is an extremally disconnected space. Using \cite[Corollary~3.15]{arhangel2011study}, we obtain that either one of $\partial_FG$ or $\partial_FH$ is finite. Without any loss of generality, let us assume that $\partial_FG$ is finite. Since $G\curvearrowright \partial_FG$ is strongly proximal, it follows that $\partial_FG$ is trivial. It is now a consequence of \cite[Theorem~3.1, Chapter~3]{Prox} that $G$ is amenable. 
\end{proof}
\end{proposition}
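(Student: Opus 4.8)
The plan is to show that the product hypothesis collapses $\partial_F(G\times H)$ onto the product of the two individual Furstenberg boundaries, and then to extract a contradiction from the fact that Furstenberg boundaries are extremally disconnected. So I would begin by writing $\partial_F(G\times H)\cong Y_1\times Y_2$ as a $G\times H$-space, the product action meaning that $G$ acts on the factor $Y_1$ and $H$ acts on the factor $Y_2$. Note that each projection $Y_1\times Y_2\to Y_i$ is equivariant, so $Y_1$ is $G$-minimal and $Y_2$ is $H$-minimal as images of a minimal space; in particular $C(Y_1)^G=\mathbb{C}=C(Y_2)^H$.

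The first substantive step is to identify the two factors with $\partial_F G$ and $\partial_F H$. The tool is the extension lemma \cite[Lemma~5.2]{BKKO}: every $G$-boundary extends to a $G\times H$-boundary on which $H$ acts trivially, and conversely any $G\times H$-boundary with trivial $H$-action factors through $G$ and is a $G$-boundary. Passing to invariant subalgebras, this says that the maximal factor of $\partial_F(G\times H)$ carrying a trivial $H$-action is exactly the maximal $G$-boundary, i.e. $C(\partial_F(G\times H))^H=C(\partial_F G)$, and symmetrically $C(\partial_F(G\times H))^G=C(\partial_F H)$. On the other hand, from the product structure together with $C(Y_1)^G=\mathbb{C}=C(Y_2)^H$ one computes directly $C(Y_1\times Y_2)^G=C(Y_2)$ and $C(Y_1\times Y_2)^H=C(Y_1)$. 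Matching the two calculations yields $Y_1\cong \partial_F G$ and $Y_2\cong \partial_F H$, hence $\partial_F(G\times H)\cong \partial_F G\times \partial_F H$.

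Next I would turn this identification into a topological constraint. By \cite[Proposition~2.4]{BKKO} the space $\partial_F(G\times H)$ is extremally disconnected, so the isomorphism above forces $\partial_F G\times \partial_F H$ to be extremally disconnected. Here I would invoke the purely topological fact that a product of two extremally disconnected (compact Hausdorff) spaces can be extremally disconnected only when one of the factors is finite \cite[Corollary~3.15]{arhangel2011study}. After relabeling, this gives that $\partial_F G$ is finite. Finally, a minimal strongly proximal action on a finite set must be trivial: the uniform, full-support measure has a closed orbit consisting entirely of full-support measures, so strong proximality can place a Dirac mass in that orbit only if the space is a single point. Thus $\partial_F G$ is trivial, and by \cite[Theorem~3.1, Chapter~3]{Prox} this forces $G$ to be amenable.

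The genuinely decisive ingredient, and the step I expect to be the main obstacle, is the topological input ruling out products of two infinite extremally disconnected spaces; without it the argument has no teeth. The only other point requiring care is the middle identification: one must use that $G$ and $H$ commute (so that $G$-invariant functions form an $H$-invariant subalgebra, and vice versa) together with the maximality property of the Furstenberg boundary to pin the product factors down as $\partial_F G$ and $\partial_F H$ rather than as mere boundaries. Everything after the finiteness conclusion is routine.
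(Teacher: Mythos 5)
Your proposal is correct and follows essentially the same route as the paper: identify the two product factors with $\partial_F G$ and $\partial_F H$ via the extension lemma of \cite{BKKO} and the maximality of the Furstenberg boundary, then use extremal disconnectedness together with \cite[Corollary~3.15]{arhangel2011study} to force one factor to be finite, hence a point, hence the corresponding group amenable. The extra details you supply (the computation $C(Y_1\times Y_2)^G=C(Y_2)$ and the argument that a finite minimal strongly proximal space is a singleton) are correct and only make the paper's argument more explicit.
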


We end this section by presenting a different proof of Theorem~\ref{thm:continuousfactortheorem} under the weaker assumption of proximality instead of strong proximality. This was kindly communicated to us by Hanfeng Li. An action $G\curvearrowright X$ is called proximal if for every pair of points $(x, y)\in X\times X$, there exists a net $s_i\in G$ such that 
\[\lim_is_ix=\lim_i s_iy.\]
\begin{thm}
\label{thm:improvcontinuousfactortheorem}
Let $G=\Gamma_1\times \Gamma_2$ be a product of two discrete groups. Let $\Gamma_1\curvearrowright \partial_P\Gamma_1$ be a proximal action and $\Gamma_2\curvearrowright Z$ be a continuous action on a compact Hausdorff space $Z$. Consider the product action $G\curvearrowright \partial_P\Gamma_1\times Z$. Let $G\curvearrowright X$ be a continuous action on a compact Hausdorff space. Moreover, assume that the diagonal actions $\Gamma_1\curvearrowright X\times \partial_P\Gamma_1$ and $\Gamma_2\curvearrowright X\times Z$ are both minimal. Then, every intermediate $G$-$C^*$-subalgebra $\mathcal{C}$ with
\[C(X)\subset\mathcal{C}\subset C(X)\otimes_{\text{min}} C(\partial_P\Gamma_1\times Z)\]
is a tensor product of the form $ C(X)\otimes_{\text{min}} C(B)$, where $B$ is a $G$-factor of $\partial_P\Gamma_1\times Z$ which splits as a product of a $\Gamma_1$-factor of $\partial_P\Gamma_1$ and a $\Gamma_2$-factor of $Z$.
\begin{proof}
Write $W=Spec(\mathcal{C})$, the Gelfand spectrum of $\mathcal{C}$. Then we have successive $G$-factor maps $X\times \partial_P\Gamma_1\times Z\rightarrow W\rightarrow X$. Write $W=\frac{X\times \partial_P\Gamma_1\times Z}{\sim}$ for a closed $G$-invariant equivalence relation $\sim$ on $X\times \partial_P\Gamma_1\times Z$. 
We aim to show that $W=X\times B_1\times B_2$ for some $\Gamma_1$-factor $\partial_P\Gamma_1\rightarrow B_1$ and some $\Gamma_2$-factor $Z\rightarrow B_2$.

Take any $(x_1, y_1, z_1)$ and $(x_2, y_2, z_2)$ in $X\times\partial_P\Gamma_1\times Z$ such that $(x_1,y_1,z_1)\sim (x_2,y_2,z_2)$. Since $W$ factors onto $X$, we deduce that $x_1=x_2$.

 Since $\Gamma_1\curvearrowright\partial_P\Gamma_1$ is proximal, there exists some net $s_i\in \Gamma_1$ such 
that $\lim_is_iy_1=\lim_is_iy_2=:y'\in \partial_P\Gamma_1$. Then, after passing to some subnet if necessary, we may assume $\lim_is_ix_1=:x'\in X$. Then since $\sim$ is closed and $\Gamma_1$-invariant, and $\Gamma_1$ acts trivially on $Z$ in the definition of product action $G\curvearrowright \partial_P\Gamma_1\times Z$, we deduce that $(x', y', z_1)\sim (x',y', z_2)$. Since $\Gamma_1\curvearrowright X\times \partial_P\Gamma_1$ is minimal and $\Gamma_1$ acts trivially on $Z$, it is clear that the above implies that 
\begin{align}\label{eq: 2nd coordinate can change}
(x, y, z_1)\sim (x,y, z_2)~ \text{for all}~ (x, y)\in X\times\partial_P\Gamma_1.
\end{align}
In particular, we get that $(x_1,y_1,z_1)\sim (x_1, y_1,z_2)$. Hence, $(x_1, y_2, z_2)\sim (x_1, y_1,z_1)\sim(x_1, y_1, z_2)$. Since $\Gamma_2\curvearrowright X\times Z$ is minimal and $\Gamma_2$ acts trivially on $\partial_P\Gamma_1$ from definition of the product action $G\curvearrowright\partial_P\Gamma_1\times Z$, this implies that actually 
\begin{align}\label{eq: 3rd coordinate can change}
(x, y_2, z)\sim (x, y_1, z)~\text{for all}~ (x, z)\in X\times Z.
\end{align}
Finally, we may define two closed equivalence relations, $\sim_1$ on $\partial_P\Gamma_1$ and $\sim_2$ on $Z$ as follows:

Define $y_1\sim_1y_2$ if $(x, y_1, z)\sim (x, y_2, z)$ for all $(x, z)\in X\times Z$;

Define $z_1\sim_2 z_2$ if $(x, y, z_1)\sim (x, y, z_2)$ for all $(x, y)\in X\times \partial_P\Gamma_1$.

Then one can check that $\sim_1$ is $\Gamma_1$-invariant and $\sim_2$ is $\Gamma_2$-invariant. Moreover, using \eqref{eq: 2nd coordinate can change} and \eqref{eq: 3rd coordinate can change}, it is clear that $(x_1,y_1,z_1)\sim (x_2, y_2, z_2)$ iff $x_1=x_2$, $y_1\sim_1 y_2$ and $z_1\sim_2z_2$. Hence we may just set $B_1=\frac{\partial_P\Gamma_1}{\sim_1}$ and $B_2=\frac{Z}{\sim_2}$ to finish the proof.
\end{proof}
\end{thm}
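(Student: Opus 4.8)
The plan is to work entirely on the commutative side. Since $C(X)\otimes_{\text{min}} C(\partial_P\Gamma_1\times Z)=C(X\times\partial_P\Gamma_1\times Z)$, any unital $G$-invariant intermediate subalgebra $\mathcal{C}$ is, by Corollary~\ref{cor: subalgebras in C(X) comes from an equivalence relation}, of the form $C(W)$ with $W=(X\times\partial_P\Gamma_1\times Z)/{\sim}$ for a closed $G$-invariant equivalence relation $\sim$. The inclusion $C(X)\subset\mathcal{C}$ gives a $G$-factor map $W\to X$, so whenever $(x_1,y_1,z_1)\sim(x_2,y_2,z_2)$ we automatically have $x_1=x_2$. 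The goal is to show that $\sim$ is the product relation: $(x_1,y_1,z_1)\sim(x_2,y_2,z_2)$ exactly when $x_1=x_2$, $y_1\sim_1 y_2$, and $z_1\sim_2 z_2$, for suitable closed $\Gamma_1$- and $\Gamma_2$-invariant relations $\sim_1$ on $\partial_P\Gamma_1$ and $\sim_2$ on $Z$; by Gelfand duality this yields $\mathcal{C}=C(X)\otimes_{\text{min}} C(B_1\times B_2)$ with $B_i$ the respective quotients.

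The heart of the argument is a two-stage \emph{collapse-then-propagate} mechanism, one stage per coordinate. First I would exploit proximality of $\Gamma_1\curvearrowright\partial_P\Gamma_1$: given a relation $(x,y_1,z_1)\sim(x,y_2,z_2)$, choose a net $s_i\in\Gamma_1$ with $s_iy_1,s_iy_2\to y'$ and, after passing to a subnet, $s_ix\to x'$. Because $\Gamma_1$ acts trivially on $Z$ and $\sim$ is closed and $G$-invariant, applying $s_i$ and taking limits yields $(x',y',z_1)\sim(x',y',z_2)$; that is, proximality aligns the $\partial_P\Gamma_1$-coordinates at the cost of moving to a single point $(x',y')$. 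I then propagate: the set $S=\{(x,y):(x,y,z_1)\sim(x,y,z_2)\}$ is closed (as $\sim$ is closed) and $\Gamma_1$-invariant (again using triviality of $\Gamma_1$ on $Z$ with $G$-invariance of $\sim$), so by minimality of $\Gamma_1\curvearrowright X\times\partial_P\Gamma_1$ it is all of $X\times\partial_P\Gamma_1$. Hence the $z$-coordinate alone may be changed, i.e.\ $z_1\sim_2 z_2$. A symmetric use of minimality of $\Gamma_2\curvearrowright X\times Z$ — now using that $\Gamma_2$ acts trivially on $\partial_P\Gamma_1$ — lets me change the $y$-coordinate alone, giving $y_1\sim_1 y_2$.

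Having isolated these one-coordinate moves, I would define $y_1\sim_1 y_2$ iff $(x,y_1,z)\sim(x,y_2,z)$ for all $(x,z)\in X\times Z$, and $z_1\sim_2 z_2$ iff $(x,y,z_1)\sim(x,y,z_2)$ for all $(x,y)\in X\times\partial_P\Gamma_1$. Each is visibly a closed equivalence relation, and a short computation using $G$-invariance of $\sim$ together with the triviality of the complementary factor on each coordinate shows $\sim_1$ is $\Gamma_1$-invariant and $\sim_2$ is $\Gamma_2$-invariant. The collapse-and-propagate step proves that $(x_1,y_1,z_1)\sim(x_2,y_2,z_2)$ forces $x_1=x_2$, $y_1\sim_1 y_2$, and $z_1\sim_2 z_2$; the converse follows by transitivity through an intermediate point (changing the $z$-coordinate, then the $y$-coordinate). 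Setting $B_1=\partial_P\Gamma_1/{\sim_1}$ and $B_2=Z/{\sim_2}$ then finishes the proof.

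I expect the main obstacle to be the propagation step: upgrading the single relation obtained at the proximality limit point $(x',y')$ into a relation valid over the entire product via minimality. The subtlety is bookkeeping which subgroup acts trivially on which factor — the scheme works precisely because in the product action $\Gamma_1$ fixes $Z$ and $\Gamma_2$ fixes $\partial_P\Gamma_1$, and it is exactly this triviality that makes the relevant ``good'' sets both closed and invariant under the appropriate minimal action. Checking that $\sim_1$ and $\sim_2$ are genuine $\Gamma_i$-invariant equivalence relations, rather than merely reflexive symmetric relations, is routine but must be carried out with the same care.
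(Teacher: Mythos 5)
Your proposal is correct and follows essentially the same route as the paper: Gelfand duality to reduce to a closed $G$-invariant equivalence relation on $X\times\partial_P\Gamma_1\times Z$, a collapse step via proximality of $\Gamma_1\curvearrowright\partial_P\Gamma_1$, propagation via minimality of $\Gamma_1\curvearrowright X\times\partial_P\Gamma_1$ and then of $\Gamma_2\curvearrowright X\times Z$, and finally the definition of $\sim_1$ and $\sim_2$. The only point to make explicit is that the ``symmetric'' second propagation is not launched by another proximality argument (none is available for $Z$) but by transitivity: combining the original relation with the already-established $z$-collapse yields the seed relation $(x_1,y_2,z_2)\sim(x_1,y_1,z_2)$ in which only the $\partial_P\Gamma_1$-coordinate differs, exactly as in the paper.
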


\section{On the Assumption of \texorpdfstring{$G$}{G}-Simplicity of \texorpdfstring{$C(X,\mathcal{A})$}{}}
\label{sec:simplicityassumption}
It is known that the ideals in the underlying $C^*$-algebra obstruct the splitting theorem. This has been observed in \cite{zacharias2001splitting}. Also, as mentioned in Remark~\ref{rem:ideal}, with the additional $G$-action, the underlying algebra must be $G$-simple. Moreover, the assumptions of Theorem~\ref{thm:continuousfactortheorem} implicitly force the ambient tensor product algebra to be $G$-simple. In this section, we show that such an assumption is necessary.

We begin by giving a condition that forces the ambient tensor product to be $G$-simple for the cases when the underlying algebra is abelian.
\begin{proposition}
Let $G\curvearrowright X$ be a minimal action on a compact Hausdorff space and $\mathcal{A}$ be a unital $G$-$C^*$-algebra such that $\mathcal{A}$ itself is simple. Then $C(X)\otimes_{\text{min}}\mathcal{A}$ is $G$-simple.
\begin{proof}
We begin by identifying $C(X)\otimes_{\text{min}} \mathcal{A}$ with $C(X, \mathcal{A})$. Under this identification, for $f\in C(X,\mathcal{A})$, $$(s.f)(x)=\sigma_s(f(s^{-1}x)),~s\in G, x\in X.$$
Let $I$ be a nonzero closed $G$-invariant two-sided ideal in $C(X)\otimes_{\text{min}} \mathcal{A}$. We aim to show $I=C(X)\otimes_{\text{min}} \mathcal{A}$.

For any $x\in X$, write $I_x=\{f(x)\mid f\in I\}\subset \mathcal{A}$.
Note that $I_{sx}=\sigma_s(I_x)$. Indeed, we observe that \begin{align*}
a \in I_x&\Leftrightarrow \exists~f\in I,~\text{s.t.}~ f(x)=a\\
&\Leftrightarrow (sf)(sx)=\sigma_s(f(x))=\sigma_s(a)~\\
&\Leftrightarrow \sigma_s(a)\in I_{sx}~\text{since $sf\in I$}.
\end{align*}
Observe that $I_x\lhd\mathcal{A}$ is an ideal. Since $\mathcal{A}$ is simple, we have either $\overline{I_x}=\{0\}$ or $\overline{I_x}=\mathcal{A}$. Define $X_0=\{x\in X\mid \overline{I_x}=\{0\}\}$ and $X_1=\{x\in X\mid \overline{I_x}=\mathcal{A}\}$. Then $X=X_0\sqcup X_1$. Observe that $X_0$ is a closed $G$-invariant subset in $X$. Indeed, $G$-invariance follows from $I_{sx}=\sigma_s(I_x)$. For the closeness, take any $x_n\in X_0$ and assume that $x_n\rightarrow x$, then $f(x_n)=0_{\mathcal{A}}$ and by taking the limit, we get that $f(x)=0_{\mathcal{A}}$ for all $f\in I$. Hence $x\in X_0$.

Since $G\curvearrowright X$ is minimal, we get two possibilities. Either, $X_0=\emptyset$ and hence we get $X=X_1$. Since $I\neq (0)$, we get $I_x\neq 0$ for some $x\in X$ and hence $X=X_1$.
It follows directly from Lemma \ref{lem: Naimark} that $I=C(X, \mathcal{A})$ in this case. (The proof uses the same ingredients as the proof of Lemma \ref{lem:smallparts}).
\end{proof}
\end{proposition}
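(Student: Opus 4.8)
The plan is to identify $C(X)\otimes_{\text{min}}\mathcal{A}$ with $C(X,\mathcal{A})$, on which $G$ acts by $(s.f)(x)=\sigma_s(f(s^{-1}x))$, and then to analyze an arbitrary nonzero $G$-invariant closed two-sided ideal $I$ fiber by fiber. For each $x\in X$ I would set $I_x=\{f(x):f\in I\}\subset\mathcal{A}$; since $I_x$ is the image of the ideal $I$ under the surjective evaluation $*$-homomorphism $\pi_x:C(X,\mathcal{A})\to\mathcal{A}$, it is automatically a closed two-sided ideal of $\mathcal{A}$ (images of ideals under surjective $*$-homomorphisms are ideals, and $*$-homomorphic images are norm-closed). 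The goal is to show every fiber $I_x$ equals $\mathcal{A}$ and then to conclude via the fiberwise description of ideals in Lemma~\ref{lem: Naimark}.

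The first key step is to track how the fibers transform under the group action, establishing $I_{sx}=\sigma_s(I_x)$ for all $s\in G$ and $x\in X$. Indeed, $a\in I_x$ exactly when $f(x)=a$ for some $f\in I$, and then $(s.f)(sx)=\sigma_s(a)$ with $s.f\in I$, which gives $\sigma_s(I_x)\subset I_{sx}$; applying this with $s^{-1}$ and $sx$ yields the reverse inclusion. Because $\mathcal{A}$ is simple, each closed ideal $I_x$ is either $\{0\}$ or $\mathcal{A}$, so I can partition $X=X_0\sqcup X_1$ with $X_0=\{x:I_x=\{0\}\}$ and $X_1=\{x:I_x=\mathcal{A}\}$. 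The relation $I_{sx}=\sigma_s(I_x)$ makes both pieces $G$-invariant, and $X_0$ is moreover closed: if $x_n\to x$ with $f(x_n)=0$ for all $f\in I$, continuity forces $f(x)=0$, so $x\in X_0$.

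Minimality of $G\curvearrowright X$ then forces $X_0$ to be either empty or all of $X$. Since $I\neq(0)$, some $f\in I$ is nonzero, so $f(x)\neq 0$ for some $x$, which rules out $X_0=X$; hence $X_0=\emptyset$ and $X=X_1$, i.e. $I_x=\mathcal{A}$ for every $x$. Applying Lemma~\ref{lem: Naimark} now gives $I=\{f\in C(X,\mathcal{A}):f(x)\in I_x=\mathcal{A}\}=C(X,\mathcal{A})$, so $I$ is the whole algebra and $C(X)\otimes_{\text{min}}\mathcal{A}$ is $G$-simple.

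I expect the only genuinely delicate input to be the structural fact that ideals of $C(X,\mathcal{A})$ are completely determined by their fibers (Lemma~\ref{lem: Naimark}); once that is granted, the argument reduces to a clean minimality-plus-simplicity dichotomy, and the remaining verifications — the equivariance $I_{sx}=\sigma_s(I_x)$, the closedness of $X_0$, and the automatic closedness of the fiber ideals — are routine.
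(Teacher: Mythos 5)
Your proposal is correct and follows essentially the same route as the paper: fiberwise analysis of the ideal via $I_x$, the equivariance $I_{sx}=\sigma_s(I_x)$, the closed $G$-invariant set $X_0$, minimality to force $X_0=\emptyset$, and Lemma~\ref{lem: Naimark} to conclude. Your observation that each $I_x$ is automatically closed (as the image of an ideal under the surjective $*$-homomorphism $\pi_x$) is a small tidying of the paper's use of $\overline{I_x}$, but the argument is otherwise identical.
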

If $\mathcal{A}$ is not simple, then it is possible that $C(X,\mathcal{A})$ fails to be $G$-simple even if $\mathcal{A}$ is $G$-simple. Indeed, just take a minimal action $\beta: G\curvearrowright Y$ such that it is not disjoint from $\alpha: G\curvearrowright X$, e.g., $\beta=\alpha$ and set $\mathcal{A}=C(Y)$. 

We now show that the existence of $G$-invariant ideals $I\triangleleft C(X,\mathcal{A})$ obstructs the splitting result. We begin with the special case when $\mathcal{A}$ is a unital commutative $G$-$C^*$-algebra.
\begin{proposition}
\label{prop:condforsplitting}
Let $G\curvearrowright X$ and $G\curvearrowright Y$ be two minimal actions. Assume that the diagonal action $G\curvearrowright X\times Y$ is not minimal and let $\emptyset\neq Z\subsetneq X\times Y$ be a $G$-invariant closed subset. Let $I\lhd C(X\times Y)$ be defined by $f\in I$ iff $f|_Z=0$. Set $W=C(Y)+I$. Then, the following hold.
\begin{enumerate}
    \item $W$ is a $G$-invariant $C^*$-algebra in $C(X\times Y)$ containing $C(Y)$.
    \item $W=C(X\times Y)\Leftrightarrow Z=\{(\phi(y),y): y\in Y \}$ for some $G$-equivariant surjection $\phi: Y\to X$.
    \item If $W= C(X'\times Y)$ for some $G$-factor $X'$ of $G\curvearrowright X$, then $X'=X$ and there is a continuous surjection $\phi: Y\to X$ such that $Z=\{(\phi(y), y):~y\in Y\}$.
    \item If $Z=\{(\phi(y), y), (\psi(y), y): ~y\in Y\}$ for two non-equal conjugacies $\phi,\psi: Y\approx X$, then $W$ is an intermediate $G$-$C^*$-subalgebra between $C(Y)$ and $C(X\times Y)$ which does not split.
    \end{enumerate}
\end{proposition}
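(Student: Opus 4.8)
The plan is to route all four items through a single explicit description of the closed $G$-invariant equivalence relation $\sim_W$ on $X\times Y$ associated to $W$ via Corollary~\ref{cor: subalgebras in C(X) comes from an equivalence relation}, and then read off each consequence. For (1), I would first observe that $C(Y)$, viewed as functions on $X\times Y$ depending only on the second coordinate, is a unital $G$-invariant $C^*$-subalgebra, and that $I$ is a closed two-sided ideal which is $G$-invariant precisely because $Z$ is: if $f|_Z=0$ then $(s.f)|_Z=0$ since $s^{-1}Z=Z$. Then $W=C(Y)+I$ is a $C^*$-subalgebra by the standard fact that the sum of a $C^*$-subalgebra and a closed ideal is again a $C^*$-subalgebra (it is the preimage, under the quotient $C(X\times Y)\to C(X\times Y)/I$, of the norm-closed image of $C(Y)$); self-adjointness, multiplicativity and $G$-invariance are then immediate. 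I would also record here that $C(Y)\cap I=\{0\}$ (a function of $y$ alone vanishing on $Z$ vanishes everywhere, since $Z$ surjects onto $Y$ by minimality) and that $I\neq\{0\}$ by Urysohn, as $Z\subsetneq X\times Y$ is closed; hence $C(Y)\subsetneq W$.

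The crucial computation is the shape of $\sim_W$. Writing $Z_y=\{x\in X:(x,y)\in Z\}$ for the fiber, I claim that $(x_1,y_1)\sim_W(x_2,y_2)$ iff $y_1=y_2=:y$ and either $x_1=x_2$ or $x_1,x_2\in Z_y$. The $y$-coordinate is separated because $C(Y)\subset W$; for fixed $y$, evaluating the ideal part and using Urysohn for the closed set $Z$ shows that $(x_1,y)$ and $(x_2,y)$ are identified exactly when both lie in $Z_y$ (if one of them lies outside $Z$, some element of $I$ separates them). Thus $W\cong C\big((X\times Y)/{\sim_W}\big)$, the space obtained from $X\times Y$ by collapsing each slice $Z_y\times\{y\}$ to a point; and by minimality of $G\curvearrowright Y$ the projection of $Z$ onto $Y$ is a closed invariant set, hence all of $Y$, so $Z_y\neq\emptyset$ for every $y$.

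With this in hand, (2) is immediate: $W=C(X\times Y)$ iff $\sim_W$ is trivial iff every $Z_y$ is a singleton, i.e. $Z=\{(\phi(y),y):y\in Y\}$ is a graph; continuity of $\phi$ follows since $p_Y|_Z$ is then a continuous bijection of compact Hausdorff spaces, hence a homeomorphism, and $G$-equivariance and surjectivity of $\phi$ follow from $G$-invariance of $Z$ together with minimality of $G\curvearrowright X$. For (3), a splitting $W=C(X'\times Y)$ with factor map $\rho\colon X\to X'$ forces $\sim_W$ to coincide with the relation $\sim_\rho$ that identifies $(x_1,y)$ and $(x_2,y)$ whenever $\rho(x_1)=\rho(x_2)$. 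Comparing the induced partitions of $X\times\{y\}$ for each fixed $y$, the $\sim_\rho$-partition is the $y$-independent partition into $\rho$-fibers, while $\sim_W$ has the single nontrivial block $Z_y$; matching these for all $y$ forces $\rho$ to have at most one non-singleton fiber $F$ with $Z_y=F$ for all $y$, whence $Z=F\times Y$. But then $G$-invariance of $Z$ makes $F$ a nonempty closed $G$-invariant subset of $X$, so $F=X$ by minimality, giving $Z=X\times Y$ and contradicting $Z\subsetneq X\times Y$. Hence all $\rho$-fibers are singletons, $X'=X$, and by (2) the set $Z$ is a graph. Finally, (4) follows by contraposition: the double graph $Z=\{(\phi(y),y),(\psi(y),y):y\in Y\}$ satisfies $|Z_{y_0}|=2$ at any $y_0$ with $\phi(y_0)\neq\psi(y_0)$, so $Z$ is not a single graph; by (2) this gives $W\neq C(X\times Y)$, and by (3) $W$ cannot split as $C(X'\times Y)$ for any $G$-factor $X'$ of $X$, while (1) gives $C(Y)\subsetneq W$, so $W$ is a genuine non-splitting intermediate subalgebra.

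I expect the main obstacle to be the partition-matching step in (3): one must argue carefully that the genuine $y$-dependence of the collapsed fibers $Z_y$ is incompatible with the $y$-independent fiber structure of an actual product splitting, and then invoke minimality of $G\curvearrowright X$ to eliminate the one surviving possibility $Z=F\times Y$. Everything else reduces to the single computation of $\sim_W$ together with Urysohn's lemma and the two minimality hypotheses.
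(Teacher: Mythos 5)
Your proposal is correct and follows essentially the same route as the paper: the explicit description of the equivalence relation $\sim_W$ (each slice $Z_y\times\{y\}$ collapsed to a point) is just a repackaging of the paper's Stone--Weierstrass/Tietze separation arguments, and the key steps — Urysohn--Tietze separation relative to the closed set $Z$, minimality of $G\curvearrowright Y$ to get $Z_y\neq\emptyset$, and minimality of $G\curvearrowright X$ to rule out $Z\supseteq F\times Y$ — coincide with those in the paper's proof of (2) and (3). The only organizational difference is that you deduce the forward implication of (2) directly from the computation of $\sim_W$, whereas the paper obtains it as a corollary of (3); both are sound.
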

\begin{proof}
(1) is clear by \cite[Theorem 3.1.7]{murphy_book}.

(2) $\Leftarrow$: By Stone-Weierstrass theorem, it suffices to check $W$ separates points in $X\times Y$. Take any $(x, y)\neq (x',y')$. If $y\neq y'$, we may use $C(Y)\subset W$ to separate them. So we may assume $y=y'$ and $x\neq x'$. Then from the definition of $Z$, either $(x, y)\in Z\not\ni (x', y)$, or $(x', y)\in Z\not\ni (x, y)$ or $(x, y)\not\in Z\not\ni (x', y)$. In the first two cases, we can clearly use $I$ to separate them. For the last case, observe that we may start with the continuous function $a: Z\cup \{(x_1, y), (x_2, y)\}\rightarrow \mathbb{R}$ defined by $a|_Z=0, a(x_i, y)=i,~i=1, 2$ and apply the Tietze extension theorem (by noting that $Z\cup \{(x_1, y), (x_2, y)\}\subset X\times Y$ is closed.) to get an $f\in I$ such that $f$ separates the two points $(x, y), (x', y)$.

(3) is, a priori, stronger than the forward implication in (2), so we just need to prove (3).

Assume that $W=C(X'\times Y)$ for some factor $G\curvearrowright X'$ of $G\curvearrowright X$ with $\pi: X\rightarrow X'$ the factor map.

$C(X'\times Y)$ has Gelfand spectrum $X'\times Y$, in other words, the evaluation map on $X'\times Y$ gives us a well-defined algebraic homomorphism on $C(X'\times Y)=W\supseteq I$. In particular, for any $y\in Y$, and any $x_1\neq x_2\in X$ with $\pi(x_1)=\pi(x_2)$, then we have $f(x_1, y)=f(x_2, y)$ for all $f\in I$. This implies that $(x_1, y)\in Z\ni (x_2, y)$.

To see this, assume that $(x_1, y)\not\in Z$, then $\exists f\in I$ with $f(x_1, y)\neq 0$. Hence $f(x_2, y)=f(x_1, y)\neq 0$, thus $(x_2, y)\not\in Z$. Since $(x_1, y)\neq (x_2, y)$, we may find some $f\in I$ with $f(x_1, y)\neq f(x_2, y)$. Indeed, just start with the continuous function $a: Z\cup \{(x_1, y), (x_2, y)\}\rightarrow \mathbb{R}$ defined by $a|_Z=0, a(x_i, y)=i,~i=1, 2$ and apply the Tietze extension theorem (by noting that $Z\cup \{(x_1, y), (x_2, y)\}\subset X\times Y$ is closed.) to get such an $f\in I$. This gives us a contradiction.

Therefore, we have proved that $\pi^{-1}(x')\times Y \subset Z$ for all $x'\in X'$ with $|\pi^{-1}(x')|>1$. But
$G\curvearrowright X$ is minimal implies that $X\times Y\subset Z$ once we have some $x'\in X'$ with $|\pi^{-1}(x')|>1$, which gives us a contradiction since $Z$ is a proper subset in $X\times Y$. This shows that in fact  for all $x'\in X'$, $|\pi^{-1}(x')|=1$; equivalently, $\pi$ is 1-1. Hence $X=X'$ and $W=C(X\times Y)$. We are left to show that $Z$ is of the desired form.

For this, just observe that since $W$ separates points in $X\times Y$, we know that for each $y\in Y$, $|\{x\in X: ~(x, y)\in Z\}|\leq 1$. But $G\curvearrowright Y$ is minimal and $Z\neq\emptyset$, hence in fact we have $|\{x\in X: (x, y)\in Z\}|=1$ for all $y\in Y$. Then it is not hard to see from this fact, and $Z$ is $G$-invariant that $Z$ is of the desired form.

(4) This is clear by (3). 
\end{proof}
Of course, more examples of $Z$ in the above proposition can be constructed using item (3) there to give non-split $G$-invariant intermediate subalgebras.
\begin{example}
Let $\Gamma$ be a non-amenable group, and $\partial_F\Gamma$, the associated Furstenberg boundary. Let $X$ be a minimal metrizable $\Gamma$-space with the property that $X\times\partial_F\Gamma$ is not minimal. Let $Z$ be a $\Gamma$-invariant closed subset of $X\times\partial_F\Gamma$. Denote by $I$ the $\Gamma$-invariant closed ideal defined by
\[I=\{f\in C(X\times\partial_F\Gamma): f|_{Z}=0\}.\]
It follows from Proposition~\ref{prop:condforsplitting} that $W=C(X)+I$ is an intermediate $\Gamma$-$C^*$-subalgebra. We claim that $W$ does not split. Indeed, if $W$ were to split, by Proposition~\ref{prop:condforsplitting}-(3), it must be the case that there exists $\Gamma$-equivariant continuous surjection $\phi: X\to\partial_F\Gamma$ such that $Z=\{(\phi(x),x):x\in X\}$. Since $X$ is metrizable, we see that $\partial_F\Gamma$ is metrizable. However,  $\partial_F\Gamma$ is never metrizable unless it is a one-point space (see \cite[Corollary~3.17]{kalantar_kennedy_boundaries}), in which case $\Gamma$ has to be amenable. The claim follows.
\end{example}
\begin{proposition}
\label{prop:hom}
Let $\mathcal{A}$ and $\mathcal{B}$ be $G$-simple unital $C^*$-algebras. Assume that $\mathcal{A}\otimes_{\text{min}}\mathcal{B}$ is not $G$-simple. Let $I\triangleleft \mathcal{A}\otimes_{\text{min}}\mathcal{B}$ be a nontrivial $G$-invariant closed two-sided ideal. Let $$\mathcal{C_{\mathcal{A}}}=\mathcal{A}+I\text{ and } \mathcal{C_{\mathcal{B}}}=\mathcal{B}+I.$$
Then, the following holds true:
\begin{enumerate}
\item $\mathcal{A}\subset\mathcal{C_{\mathcal{A}}}\subset\mathcal{A}\otimes_{\text{min}}\mathcal{B}$ is an intermediate $C^*$-algebra. Similarly, $\mathcal{B}\subset\mathcal{C_{\mathcal{B}}}\subset\mathcal{A}\otimes_{\text{min}}\mathcal{B}$ is an intermediate $C^*$-algebra.
\item If $\mathcal{C}_{\mathcal{A}}$ splits, then there exists an injective $*$-homomorphism $\Phi:\mathcal{B}\to\mathcal{A}$. Similarly, if $\mathcal{C}_{\mathcal{B}}$ splits, then there exists an injective $*$-homomorphism $\Phi:\mathcal{A}\to\mathcal{B}$.
\end{enumerate}
\begin{proof}
Since $I\cap\mathcal{A}$ is a $G$-invariant ideal of $\mathcal{A}$, it follows from the $G$-simplicity of $\mathcal{A}$ that $I\cap\mathcal{A}=\{0\}$. A similar argument shows that $I\cap\mathcal{B}=\{0\}$.

Assume that $\mathcal{C}_{\mathcal{A}}$ splits, i.e., $\mathcal{C}_{\mathcal{A}}=\mathcal{A}\otimes_{\text{min}}\tilde{\mathcal{B}}$ for some $G$-invariant subalgebra $\tilde{\mathcal{B}}\subset\mathcal{B}$. We first claim that $\mathcal{C}_{\mathcal{A}}=\mathcal{A}\otimes_{\text{min}}\mathcal{B}$. Write $\mathcal{A} + I = \mathcal{C}_{\mathcal{A}} = \mathcal{A}\otimes_{\text{min}}\tilde{\mathcal{B}}$. Given that $I \cap\mathcal{A} = 0$, it must be the case that $\tilde{\mathcal{B}}\ne\mathbb{C}$. Also, we know that $\mathbb{E}_{\phi}(I)\subseteq \tilde{\mathcal{B}}$ for all $\phi\in S(\mathcal{A})$. Now choose $b\in \tilde{\mathcal{B}}\setminus\mathbb{C}$. We know that there
exists some $a \in \mathcal{A} $ and $i \in I$ so that $a \otimes 1+i = 1 \otimes b$. Fixing some $\phi \in S(\mathcal{A})$, we may apply
$\mathbb{E}_{\phi} : \mathcal{A} \otimes_{\text{min}} \mathcal{B} \to \mathcal{B}$ to get $\mathbb{E}_{\phi}(i) = b-\phi(a)$. Observe that this value is nonzero, as we were assuming that $b \not\in \mathbb{C}$. Also, we have
$$g\mathbb{E}_{\phi}(i) = gb- \phi(a) = \mathbb{E}_{g\phi}(gi).$$
Given that these values always lie in $\tilde{\mathcal{B}}$, and the fact that all $\mathbb{E}_{g\phi}$ are $\mathcal{B}$-bimodule maps, it
must be the case that the ideal generated by $\mathbb{E}_{\phi}(i)$ is still contained in $\tilde{\mathcal{B}}$. But this ideal is $\mathcal{B}$ itself. So, $\mathcal{B}=\tilde{\mathcal{B}}$ and the claim follows.

In particular, $\mathcal{B}\subset\mathcal{C}_{\mathcal{A}}$. Each $b\in\mathcal{B}$ can be written as $b=i_b+a_b$ for some $i_b\in I$ and $a_b\in\mathcal{A}$. Define $\Phi: \mathcal{B}\to\mathcal{A}$ by $b\mapsto a_b$. Let us first check the well-definedness. If $i_b+a_b=i_b'+a_b'$ for some $i_b,i_b'\in I$ and $a_b,a_b'\in\mathcal{A}$, then we see that $i_b-i_b'=a_b-a_b'\in I\cap\mathcal{A}$. Since $I\cap\mathcal{A}=\{0\}$, we obtain that $a_b=a_b'$ and $i_b=i_b'$. This shows that every element in $\mathcal{B}$ can be uniquely written as a sum of two elements from $I$ and $\mathcal{A}$. Also, for $b,\Tilde{b}\in\mathcal{B}$, we see that
\begin{align*}
\Phi(b\tilde{b})&=\Phi\left((i_b+a_b)(i_{\tilde{b}}+a_{\tilde{b}})\right)\\&=\Phi(i_{b}i_{\tilde{b}}+i_ba_{\tilde{b}}+a_bi_{\tilde{b}}+a_ba_{\tilde{b}})\\&=a_ba_{\tilde{b}}\\&=\Phi(b)\Phi(\Tilde{b})
\end{align*}
Similarly,
\[\Phi(b^*)=\Phi(i_b^*+a_b^*)=a_b^*=\Phi(b)^*,~b\in\mathcal{B}.\]
Moreover, since $I$ is $G$-invariant (we denote the $G$-action by $\sigma$), it also follows that
\[\Phi(\sigma_s(b))=\Phi(\sigma_s(i_b)+\sigma_s(a_b))=\sigma_s(a_b)=\sigma_s\left(\Phi(b)\right),~b\in\mathcal{B},~s\in G.\]
All that remains to be shown now is that $\Phi$ is injective. Towards that end assume that $\Phi(b)=\Phi(\tilde{b})$ for some $b,\Tilde{b}\in\mathcal{B}$. It then follows that $a_b=a_{\tilde{b}}$. Now,
\[b-\Tilde{b}=i_b+a_b-i_{\tilde{b}}-a_{\Tilde{b}}=i_b-i_{\tilde{b}}\in I\cap\mathcal{B}.\]
Since $I\cap\mathcal{B}=\{0\}$,  we see that $b=\Tilde{b}$. This shows that $\Phi$ is an injective $*$-homomorphism. A similar argument works for $\mathcal{C}_{\mathcal{B}}$.
\end{proof}
\end{proposition}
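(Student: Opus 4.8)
The plan is to handle part (1) together with two ``intersection'' facts first, then show that splitting of $\mathcal{C}_{\mathcal{A}}$ forces the full tensor product, and finally read off the homomorphism. For (1) I would invoke the standard fact that the sum of a $C^*$-subalgebra and a closed two-sided ideal is again a $C^*$-subalgebra (\cite[Theorem~3.1.7]{murphy_book}): since $\mathcal{A}$ (identified with $\mathcal{A}\otimes_{\text{min}}\mathbb{C}$) is a subalgebra and $I$ is a closed ideal of $\mathcal{A}\otimes_{\text{min}}\mathcal{B}$, the sum $\mathcal{C}_{\mathcal{A}}=\mathcal{A}+I$ is an intermediate $C^*$-algebra, and symmetrically for $\mathcal{C}_{\mathcal{B}}$. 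The two facts I would record next are that $I\cap\mathcal{A}=\{0\}$ and $I\cap\mathcal{B}=\{0\}$: indeed $I\cap\mathcal{A}$ is a $G$-invariant closed ideal of $\mathcal{A}$, and it is proper because $I$ is a proper ideal (so $1\notin I$), whence $G$-simplicity of $\mathcal{A}$ makes it trivial; the same argument applies to $\mathcal{B}$. These triviality statements drive everything that follows.

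The core of part (2) is the claim that if $\mathcal{C}_{\mathcal{A}}=\mathcal{A}\otimes_{\text{min}}\tilde{\mathcal{B}}$ for some $G$-invariant subalgebra $\tilde{\mathcal{B}}\subset\mathcal{B}$, then in fact $\tilde{\mathcal{B}}=\mathcal{B}$. First note $\tilde{\mathcal{B}}\neq\mathbb{C}$, since otherwise $\mathcal{C}_{\mathcal{A}}=\mathcal{A}$ would force $I\subset\mathcal{A}$, contradicting $I\cap\mathcal{A}=\{0\}$ together with $I\neq\{0\}$. Choosing $b\in\tilde{\mathcal{B}}\setminus\mathbb{C}$ and writing $1\otimes b=a\otimes 1+i$ with $a\in\mathcal{A}$ and $i\in I$, I would apply a slice map $\mathbb{E}_{\varphi}$ (for $\varphi\in S(\mathcal{A})$) to obtain $\mathbb{E}_{\varphi}(i)=b-\varphi(a)1$, which is nonzero precisely because $b\notin\mathbb{C}$. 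The leverage comes from $I$ being a two-sided ideal: for all $c,c'\in\mathcal{B}$ and $s\in G$ the element $(1\otimes c)\sigma_s(i)(1\otimes c')$ again lies in $I\subset\mathcal{A}\otimes_{\text{min}}\tilde{\mathcal{B}}$, and since each $\mathbb{E}_{\varphi}$ is a $\mathcal{B}$-bimodule map landing in $\tilde{\mathcal{B}}$, the entire closed $G$-invariant two-sided ideal of $\mathcal{B}$ generated by the nonzero element $\mathbb{E}_{\varphi}(i)$ is contained in $\tilde{\mathcal{B}}$. Applying $G$-simplicity of $\mathcal{B}$ then yields $\tilde{\mathcal{B}}=\mathcal{B}$, i.e. $\mathcal{C}_{\mathcal{A}}=\mathcal{A}\otimes_{\text{min}}\mathcal{B}$.

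With $\mathcal{C}_{\mathcal{A}}=\mathcal{A}\otimes_{\text{min}}\mathcal{B}$ established, $1\otimes\mathcal{B}\subset\mathcal{A}+I$, so every $b\in\mathcal{B}$ decomposes as $b=a_b+i_b$ with $a_b\in\mathcal{A}$ and $i_b\in I$; this decomposition is unique by $I\cap\mathcal{A}=\{0\}$. I would then define $\Phi\colon\mathcal{B}\to\mathcal{A}$ by $\Phi(b)=a_b$ and check that it is a $G$-equivariant injective $*$-homomorphism: multiplicativity holds because in the product $(a_b+i_b)(a_{\tilde b}+i_{\tilde b})$ every term involving a factor from $I$ stays in the ideal $I$, leaving $a_b a_{\tilde b}$ as the unique $\mathcal{A}$-part; $*$-preservation and equivariance follow because $I$ is self-adjoint and $G$-invariant; and injectivity is immediate since $\Phi(b)=\Phi(\tilde b)$ gives $b-\tilde b\in I\cap\mathcal{B}=\{0\}$. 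The assertion for $\mathcal{C}_{\mathcal{B}}$ follows by interchanging the roles of $\mathcal{A}$ and $\mathcal{B}$.

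The one genuinely delicate step is the proof that $\tilde{\mathcal{B}}=\mathcal{B}$. The temptation there is to conclude from ``the ideal generated by $\mathbb{E}_{\varphi}(i)$ is all of $\mathcal{B}$'' using simplicity of $\mathcal{B}$, but only $G$-simplicity is assumed; the fix is to carefully assemble a $G$-invariant ideal sitting inside $\tilde{\mathcal{B}}$, which requires using simultaneously that $I$ is a two-sided ideal (to form the $\mathcal{B}$-bimodule multiples) and that $I$ is $G$-invariant (to absorb the orbit under $\sigma_s$), so that $G$-simplicity can be applied. Everything else is bookkeeping with the unique decomposition $\mathcal{A}+I$ and with the bimodule slice maps $\mathbb{E}_{\varphi}$.
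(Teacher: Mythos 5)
Your proposal is correct and follows essentially the same route as the paper: the triviality of $I\cap\mathcal{A}$ and $I\cap\mathcal{B}$ via $G$-simplicity, the slice-map argument showing $\tilde{\mathcal{B}}=\mathcal{B}$, and the unique decomposition $b=a_b+i_b$ defining the injective $*$-homomorphism. Your explicit remark that one must assemble the \emph{$G$-invariant} ideal generated by $\mathbb{E}_{\varphi}(i)$ (since only $G$-simplicity of $\mathcal{B}$ is assumed) is exactly the point the paper handles via the identity $g\mathbb{E}_{\phi}(i)=\mathbb{E}_{g\phi}(gi)$, so there is no substantive difference.
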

The following is now obvious.
\begin{thm}
\label{thm:hom}
Let $\mathcal{A}=C(X)$ and $\mathcal{B}$ be $G$-simple unital $C^*$-algebras admitting faithful states. Assume that $\mathcal{A}\otimes_{\text{min}}\mathcal{B}$ is not $G$-simple. Also, assume that $\mathcal{B}$ is non-commutative. Then, there exists an intermediate algebra $\mathcal{C}_{\mathcal{A}}$ with $C(X)\subsetneq \mathcal{C}_{\mathcal{A}}\subset C(X)\otimes_{\text{min}}\mathcal{B}$ such that $\mathcal{C}_{\mathcal{A}}$ does not split.
\begin{proof}
Let $I\triangleleft \mathcal{A}\otimes_{\text{min}}\mathcal{B}$ be a nontrivial $G$-invariant closed two-sided ideal. Let $$\mathcal{C_{\mathcal{A}}}=\mathcal{A}+I.$$
If $\mathcal{C_{\mathcal{A}}}$ were to split, there exists an injective $*$-homomorphism $\Phi:\mathcal{B}\to C(X)$. This is a contradiction since $\mathcal{B}$ is non-commutative.
\end{proof}
\end{thm}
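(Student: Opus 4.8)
The plan is to exploit the failure of $G$-simplicity directly. The non-$G$-simplicity hypothesis hands us a nontrivial $G$-invariant closed two-sided ideal $I\triangleleft C(X)\otimes_{\text{min}}\mathcal{B}$, and the natural candidate for a non-splitting intermediate algebra is $\mathcal{C}_{\mathcal{A}}=C(X)+I$, precisely the object studied in Proposition~\ref{prop:hom}. First I would invoke Proposition~\ref{prop:hom}(1) to record that $\mathcal{C}_{\mathcal{A}}$ is genuinely an intermediate $C^*$-algebra, $C(X)\subset\mathcal{C}_{\mathcal{A}}\subset C(X)\otimes_{\text{min}}\mathcal{B}$. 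To upgrade the left inclusion to a strict one, observe that $G$-simplicity of $C(X)$ forces $I\cap C(X)=\{0\}$, since $I\cap C(X)$ is a $G$-invariant ideal of $C(X)$; hence any nonzero $i\in I$ lies in $\mathcal{C}_{\mathcal{A}}\setminus C(X)$, giving $C(X)\subsetneq\mathcal{C}_{\mathcal{A}}$ and ensuring the example is nontrivial.

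Next I would argue by contradiction. Suppose $\mathcal{C}_{\mathcal{A}}$ splits. Then Proposition~\ref{prop:hom}(2) produces an injective $*$-homomorphism $\Phi\colon\mathcal{B}\to C(X)$. Since $C(X)$ is abelian, the image $\Phi(\mathcal{B})$ is an abelian $C^*$-algebra, and injectivity of $\Phi$ yields an isomorphism $\mathcal{B}\cong\Phi(\mathcal{B})$, so $\mathcal{B}$ would be commutative. This contradicts the hypothesis that $\mathcal{B}$ is non-commutative, and the theorem follows.

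There is essentially no obstacle left once Proposition~\ref{prop:hom} is in hand: the substantive work, namely verifying that $\mathcal{C}_{\mathcal{A}}$ is intermediate and extracting the embedding $\mathcal{B}\hookrightarrow C(X)$ from a hypothetical splitting, has already been carried out there. The only points needing attention in the present statement are the bookkeeping confirming the strict inclusion $C(X)\subsetneq\mathcal{C}_{\mathcal{A}}$ and the observation that the standing hypotheses of Proposition~\ref{prop:hom} hold here, with $\mathcal{A}=C(X)$ and $\mathcal{B}$ both $G$-simple, unital, and admitting faithful states, placing us squarely within its scope. The closing contradiction rests on the elementary fact that a non-commutative $C^*$-algebra admits no injective $*$-homomorphism into a commutative one, which immediately rules out the existence of $\Phi$.
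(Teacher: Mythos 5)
Your proposal is correct and follows essentially the same route as the paper: take a nontrivial $G$-invariant closed ideal $I$, set $\mathcal{C}_{\mathcal{A}}=C(X)+I$, and apply Proposition~\ref{prop:hom}(2) to derive an injective $*$-homomorphism $\mathcal{B}\to C(X)$, contradicting non-commutativity of $\mathcal{B}$. Your explicit verification of the strict inclusion $C(X)\subsetneq\mathcal{C}_{\mathcal{A}}$ via $I\cap C(X)=\{0\}$ is a small welcome addition that the paper leaves implicit.
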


\begin{example}
Let $G=\mathbb{Z}\rtimes \frac{\mathbb{Z}}{2\mathbb{Z}}=\langle s\rangle\rtimes \langle t\rangle=\langle s,t\mid tst^{-1}=s^{-1},t^2=e\rangle$ be the infinite Dihedral group. Then consider any minimal action $G\curvearrowright X$ such that the subaction $\mathbb{Z}=\langle s\rangle\curvearrowright X$ is not minimal. Such an action $G\curvearrowright X$ exists and is, in fact, conjugate to an induced action, see, e.g., \cite[proposition 5.1]{jiang2023continuous}. Let us briefly recall the construction.

Let $\mathbb{Z}=\langle s\rangle\curvearrowright X_0$ be any minimal action. Set $X=G/{\mathbb{Z}}\times X_0$ and the action $G\curvearrowright X=G/{\mathbb{Z}}\times X_0$ is defined by $g(g'\mathbb{Z}, x)=(gg'\mathbb{Z}, \delta(g, g'\mathbb{Z})x)$ for any $g, g'\in G$
and $x\in X_0$. Here, $\delta: G\times G/{\mathbb{Z}}\rightarrow \mathbb{Z}$ denotes the natural cocycle associated to the natural
lifting $L: G/{\mathbb{Z}}\rightarrow G$ defined by $L(\mathbb{Z})=e$ and $L(t\mathbb{Z})=t$. More precisely, we have
\begin{align*}
\delta(s^n, \mathbb{Z})=s^n,~~
\delta(s^n, t\mathbb{Z})=s^{-n},~~
\delta(s^nt, \mathbb{Z})=s^{-n},~~
\delta(s^nt,t\mathbb{Z})=s^n.
\end{align*}
Next, we set $\mathcal{A}=M_2(\mathbb{C})\oplus M_2(\mathbb{C})$ and consider the flip action $\alpha: \frac{\mathbb{Z}}{2\mathbb{Z}}\curvearrowright\mathcal{A}$ by swapping the coordinates. This gives us an action $\pi: G\twoheadrightarrow \frac{\mathbb{Z}}{2\mathbb{Z}}=\langle t\rangle\curvearrowright \mathcal{A}$ by composing $\alpha$ with the natural quotient homomorphism $G\twoheadrightarrow G/{\mathbb{Z}}\cong \frac{\mathbb{Z}}{2\mathbb{Z}}$. Note that $G\curvearrowright \mathcal{A}$ is $G$-simple. 

Indeed, let $I\lhd \mathcal{A}$ be a $G$-invariant ideal. First, note that every ideal in $\mathcal{A}=M_2(\mathbb{C})\oplus M_2(\mathbb{C})$ is of the form $0, M_2(\mathbb{C})\oplus 0_2, 0_2\oplus M_2(\mathbb{C})$ or $\mathcal{A}$ since $M_2(\mathbb{C})$ is simple. Then since $I$ is $\langle t\rangle$-invariant, we get $I=0$ or $\mathcal{A}$. 

We are left to show that the diagonal action $G\curvearrowright C(X)\otimes_{\text{min}} \mathcal{A}$ is not $G$-simple.  Recall that by identifying $C(X)\otimes_{\text{min}} \mathcal{A}$ with $C(X,\mathcal{A})$, the diagonal action is given by $(g\xi)(g')=\pi_g(\xi(g^{-1}g'))$ for all $g, g'\in G$. Note that $\pi_{s^n}=id$ and $\pi_{ts^n}=\pi_t$ is the flip automorphism. Define $$I=\{f\in C(X,\mathcal{A})\mid~ f|_{\mathbb{Z}\times X_0}=M_2(\mathbb{C})\oplus 0_2, ~f|_{t\mathbb{Z}\times X_0}=0_2\oplus M_2(\mathbb{C})\}.$$ It is routine to check that $I$ is a $G$-invariant non-zero proper ideal. To show the $G$-invariance, we do computations.
Take any $\xi\in I$ and $x_0\in X_0$. Let us check $g\xi\in I$ for any $g\in G$. \begin{align*}
(s^{-n}\xi)(\mathbb{Z}, x_0)&=\pi_{s^{-n}}(\xi(s^n(\mathbb{Z}, x_0))\\&=\xi(\mathbb{Z},\delta(s^n,\mathbb{Z})x_0)=\xi(\mathbb{Z}, s^nx_0)\in M_2(\mathbb{C})\oplus 0_2,\\
(s^{-n}\xi)(t\mathbb{Z}, x_0)&=\pi_{s^{-n}}(\xi(s^n(t\mathbb{Z}, x_0))\\&=\xi(t\mathbb{Z},\delta(s^n,t\mathbb{Z})x_0)\\&=\xi(t\mathbb{Z}, s^{-n}x_0)\in 0_2\oplus M_2(\mathbb{C}),\\
(ts^{-n}\xi)(\mathbb{Z}, x_0)&=\pi_{ts^{-n}}(\xi(s^nt(\mathbb{Z}, x_0))=\pi_t(\xi(t\mathbb{Z},\delta(s^nt,\mathbb{Z})x_0))\\
&=\pi_t(\xi(t\mathbb{Z}, s^{-n}x_0)\in \pi_t(0_2\oplus M_2(\mathbb{C}))=M_2(\mathbb{C})\oplus 0_2,\\
(ts^{-n}\xi)(t\mathbb{Z}, x_0)&=\pi_{ts^{-n}}(\xi(s^nt(t\mathbb{Z}, x_0))=\pi_t(\xi(\mathbb{Z},\delta(s^nt,t\mathbb{Z})x_0))\\
&=\pi_t(\xi(\mathbb{Z}, s^nx_0)\in \pi_t(M_2(\mathbb{C})\oplus 0_2)=0_2\oplus M_2(\mathbb{C}).
\end{align*}
\end{example}
Consequently, we obtain the following for the case of $C(X,\mathcal{A})$.
\begin{cor}
Let $Y$ be a $G$-minimal compact Hausdroff space  and $\mathcal{A}$ be a $G$-simple $C^*$-algebra. Let $I$ be a $G$-invariant closed ideal inside $C(Y,\mathcal{A})$. Let $\mathcal{C}_I=C(Y)+I$. Then, the following hold.
\begin{enumerate}
    \item $\mathcal{C}_I$ is a $G$-invariant $C^*$-algebra in $C(Y, \mathcal{A})$ containing $C(Y)$.
    \item $\mathcal{C}_I=C(Y,\mathcal{A}) \text{ iff } \mathcal{A}\text{ is of the form
 } C(X)\text{ for some $G$-space $X$}$ and there exists a non-empty closed $G$-invariant subset $Z\subset X\times Y$ such that $$I=\{f\in C(X\times Y): f|_{Z}=0\},$$ and $Z=\{(\phi(y),y): y\in Y \}$ for some $G$-equivariant continuous surjection $$\phi: Y\to X.$$
    \item If $\mathcal{C}_I= C(Y,\mathcal{B})$ for some $G$-invariant unital $G$-simple $C^*$-subalgebra $\mathcal{B}\subset\mathcal{A}$, then $\mathcal{B}$ is a unital commutative $C^*$-subalgebra of $C(Y)$.
    \end{enumerate}
\begin{proof}
$(1)$ is clear. Let us now prove $(2)$. Assume that $\mathcal{C}_I=C(Y,\mathcal{A})$. It follows from Proposition~\ref{prop:hom} that there exists a $G$-equivariant injective $*$-homomorphism $\Phi:\mathcal{A}\to C(Y)$. Therefore, $\mathcal{A}=C(X)$ for some $G$-space $X$. Therefore, $C(Y,\mathcal{A})=C(Y\times X)$. Consequently, there exists a non-empty closed $G$-invariant subset $Z\subset X\times Y$ such that $I=\{f\in C(X\times Y): f|_{Z}=0\}$. The existence of a $G$-equivariant surjection $\phi: Y\to X$ such that $Z=\{(\phi(y),y): y\in Y\}$ follows from Proposition~\ref{prop:condforsplitting}. Also, the reverse direction follows from the second claim of Proposition~\ref{prop:condforsplitting}. To prove $(3)$, we observe that if $\mathcal{C}_I= C(Y,\mathcal{B})$ for some $G$-invariant unital $C^*$-subalgebra $\mathcal{B}\subset\mathcal{A}$, then it follows from Proposition~\ref{prop:hom} that $\mathcal{B}$ is a commutative unital $C^*$-subalgebra of $C(Y)$ and hence, is of the form $C(X')$ for some $G$-space $X'$. The implication is now evident by the third claim of Proposition~\ref{prop:condforsplitting}.
\end{proof}
\end{cor}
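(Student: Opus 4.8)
The plan is to reduce all three assertions to the two structural results already in hand: Proposition~\ref{prop:hom}, which turns a splitting of an algebra of the form ``subalgebra $+$ ideal'' into an injective $G$-equivariant $*$-homomorphism between the tensor factors, and Proposition~\ref{prop:condforsplitting}, which settles the purely commutative situation. The only genuinely new decision is choosing, in each part, the correct ambient tensor product in which to invoke Proposition~\ref{prop:hom}.

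Part (1) I expect to be immediate. Under the identification $C(Y,\mathcal{A})=C(Y)\otimes_{\text{min}}\mathcal{A}$, the copy $C(Y)=C(Y)\otimes 1_{\mathcal{A}}$ is a $G$-invariant unital $C^*$-subalgebra (the diagonal action fixes $1_{\mathcal{A}}$) and $I$ is a $G$-invariant closed two-sided ideal; the sum of a $C^*$-subalgebra and a closed ideal is again a closed $C^*$-subalgebra (the same standard fact used in Proposition~\ref{prop:condforsplitting}(1)), and it is $G$-invariant because both summands are.

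For part (2) I would first dispatch the reverse implication: assuming $\mathcal{A}=C(X)$ and $Z=\{(\phi(y),y):y\in Y\}$ for a $G$-equivariant surjection $\phi\colon Y\to X$, the Gelfand identification $C(Y,\mathcal{A})=C(X\times Y)$ turns $\mathcal{C}_I=C(Y)+I$ into exactly the algebra $W$ of Proposition~\ref{prop:condforsplitting}, and the $\Leftarrow$ direction of part (2) of that proposition gives $W=C(X\times Y)$, i.e. $\mathcal{C}_I=C(Y,\mathcal{A})$. For the forward implication I would apply Proposition~\ref{prop:hom} with $C(Y)$ playing the role of $\mathcal{A}$ (it is $G$-simple since $G\curvearrowright Y$ is minimal) and $\mathcal{A}$ playing the role of $\mathcal{B}$, so that the intermediate algebra $C(Y)+I$ is exactly $\mathcal{C}_I$. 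The hypothesis $\mathcal{C}_I=C(Y,\mathcal{A})=C(Y)\otimes_{\text{min}}\mathcal{A}$ is precisely the (trivial) statement that this intermediate algebra splits, so Proposition~\ref{prop:hom}(2) produces an injective $G$-equivariant $*$-homomorphism $\Phi\colon\mathcal{A}\to C(Y)$. In particular $\mathcal{A}$ is commutative, hence $\mathcal{A}=C(X)$ for a $G$-space $X$ whose action is minimal by $G$-simplicity. Identifying $C(Y,\mathcal{A})=C(X\times Y)$, the ideal $I$ becomes the vanishing ideal of a closed $G$-invariant set $Z\subset X\times Y$; nontriviality of $I$ forces $\emptyset\neq Z\subsetneq X\times Y$ (so $X\times Y$ is not minimal), and the equality $\mathcal{C}_I=C(X\times Y)$ then lets me conclude via Proposition~\ref{prop:condforsplitting}(2) that $Z$ is the graph of a $G$-equivariant surjection $\phi\colon Y\to X$.

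Part (3) is the subtle one, and the crucial move is to apply Proposition~\ref{prop:hom} inside the \emph{smaller} algebra. Given $\mathcal{C}_I=C(Y,\mathcal{B})$ with $\mathcal{B}\subset\mathcal{A}$ a $G$-invariant unital $G$-simple subalgebra, I would work not in $C(Y,\mathcal{A})$ but in $C(Y,\mathcal{B})=C(Y)\otimes_{\text{min}}\mathcal{B}$; since $I\subset\mathcal{C}_I=C(Y,\mathcal{B})$, the ideal $I$ is automatically a $G$-invariant closed ideal of $C(Y,\mathcal{B})$. Applying Proposition~\ref{prop:hom} with the two $G$-simple tensor factors $C(Y)$ and $\mathcal{B}$, the identity $\mathcal{C}_I=C(Y)+I=C(Y,\mathcal{B})$ exhibits $C(Y)+I$ as the entire ambient algebra, i.e. as a split intermediate algebra, whence Proposition~\ref{prop:hom}(2) yields an injective $G$-equivariant $*$-homomorphism $\Phi\colon\mathcal{B}\to C(Y)$. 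This both shows $\mathcal{B}$ is commutative and realizes it as a unital $C^*$-subalgebra of $C(Y)$, which is exactly the assertion; writing $\mathcal{B}=C(X')$ and feeding this back into Proposition~\ref{prop:condforsplitting}(3) pins down the remaining structure. The main obstacle I anticipate is precisely this bookkeeping in part (3)---recognizing that the correct ambient is $C(Y,\mathcal{B})$ and not $C(Y,\mathcal{A})$---together with checking the non-degeneracy hypotheses needed to invoke the two propositions: that $I$ is genuinely nontrivial as an ideal of the relevant tensor product (so it is not $G$-simple), and, in part (2), that the set $Z$ is nonempty and proper. Everything else should be a routine transcription through the identification $C(Y,C(X))=C(X\times Y)$.
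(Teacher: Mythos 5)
Your proposal is correct and follows essentially the same route as the paper: part (1) via the subalgebra-plus-ideal fact, part (2) by combining Proposition~\ref{prop:hom} (to force $\mathcal{A}=C(X)$) with Proposition~\ref{prop:condforsplitting}, and part (3) by invoking Proposition~\ref{prop:hom} again for the pair $(C(Y),\mathcal{B})$. Your explicit observation that in part (3) one must work inside the ambient algebra $C(Y,\mathcal{B})$ (where $I$ remains a nontrivial $G$-invariant closed ideal) is exactly the correct, if unstated, reading of the paper's terser argument, and your attention to the nondegeneracy of $I$ and $Z$ is likewise consistent with what the paper implicitly assumes.
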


\begin{example}[Finite dimensional spaces]
Let $X$ be a finite transitive $G$-space, let us write it as $\{x_1, x_2,\ldots,x_d\}$ for distinct points $x_i$, i.e., $x_i\ne x_j$ for $i\ne j$. Let
\[Z=\{(x_i,x_j): i\ne j\}\]
Observe that $Z$ is a closed $G$-invariant subset of $X\times X$.
Let $I=\{f\in C(X\times X): f|_{Z}=0\}$. Consider
\[\mathcal{A}_I= C(X)+I\]
Thus $\mathcal{A}_I$ is a $G$-invariant intermediate subalgebra by \cite[Theorem 3.1.7]{murphy_book}.\\
\textit{Claim:} $\mathcal{A}_I$ splits iff $|X|=2$.\\
Assume that $|X|=d>2$. For any $\sigma \in\text{Perm}(X)$, $\{(\sigma(x_i), (x_i)):i=1,2,\ldots d\}$ has exactly $d$ elements. On the other hand, $Z$ has exactly $d(d-1)$ elements. Since $d>2$, $$Z\ne \{(\sigma(x_i), (x_i)):i=1,2,\ldots d\}$$
Therefore, using Proposition~\ref{prop:condforsplitting}-(3), we see that $\mathcal{A}_I$ cannot split. The other direction follows by observing that there is only one nontrivial permutation in $S_2$ and again appealing to Proposition~\ref{prop:condforsplitting}.
\end{example}

\section{Uniformly Rigid actions}
\label{sec:morenoncommutative}
Most of the examples in our setup have a ``non-faithful"-characteristic. Precisely, the group acting minimally on the underlying space falls in the kernel of the action of the other component of the ambient tensor product. Motivated by the rigid topological dynamical systems studied in \cite{glasner1989rigidity}, we introduce the notion of uniformly rigid actions for general $C^*$-algebras. This allows us to give a class of examples satisfying the splitting theorem with the additional property that the action on the other component of the ambient tensor product is faithful.
\begin{definition}[Uniformly Rigid Actions]\label{def: strongly uniformly rigid}
Let $G\curvearrowright A$ be an action on a unital $C^*$-algebra $A$ by *-automorphisms. We say this action is uniformly rigid if there is a net of nontrivial elements $s_i\in G$ such that $\lim_i\left\|s_ia-a\right\|=0$ for all $a\in A$. In this case, we say that $\{s_i\}$ is a rigid sequence  witnessing the uniform rigidity for this action.
\end{definition}

\begin{proposition}\label{prop: ur is weaker than sur}
Let $G\curvearrowright X$ be a continuous action on a compact Hausdorff space $X$. Then $G\curvearrowright C(X)$ is uniformly rigid iff $G\curvearrowright X$ is uniformly rigid, i.e., there exists a sequence (or a net in the non-metrizable case) of nontrivial elements in $G$, say $\{s_i\}$ such that $s_i\rightarrow Id_X$ uniformly.
\end{proposition}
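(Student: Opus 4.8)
The plan is to reduce both implications to one elementary norm identity together with the standard description of the (unique) uniformity on a compact Hausdorff space by continuous functions. Recall that the $G$-action on $C(X)$ is $(sf)(x)=f(s^{-1}x)$, so for every $s\in G$ and $f\in C(X)$ the change of variables $y=s^{-1}x$ gives
\[
\|sf-f\|_\infty=\sup_{x\in X}|f(s^{-1}x)-f(x)|=\sup_{y\in X}|f(sy)-f(y)|.
\]
Consequently, uniform rigidity of $G\curvearrowright C(X)$ means precisely that there is a net of nontrivial $s_i\in G$ with $\sup_{x}|f(s_ix)-f(x)|\to 0$ for every $f\in C(X)$, whereas uniform rigidity of $G\curvearrowright X$ asks that $s_i\to Id_X$ uniformly for the unique uniformity $\mathcal{U}$ on $X$, i.e. that for each entourage (neighborhood of the diagonal $\Delta\subset X\times X$) one eventually has $(s_ix,x)$ in it for all $x$ simultaneously. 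The point of the displayed identity is that it makes the two conditions formally symmetric, so that no passage to inverses is needed.

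The one nontrivial ingredient, which I would isolate as a lemma, is that $\mathcal{U}$ has a basis consisting of the sets
\[
V_{f_1,\dots,f_n;\epsilon}:=\bigcap_{k=1}^n\{(x,y)\in X\times X:\ |f_k(x)-f_k(y)|<\epsilon\},\qquad f_k\in C(X),\ \epsilon>0.
\]
Each such set is clearly open and contains $\Delta$. For the converse containment I would take an open $W\supseteq\Delta$, observe that $K:=(X\times X)\setminus W$ is compact and disjoint from $\Delta$, and for each $(x,y)\in K$ (so $x\neq y$) choose $f\in C(X)$ with $f(x)\neq f(y)$, giving an open neighborhood $\{(x',y'):|f(x')-f(y')|>\delta\}$ of $(x,y)$ with $\delta=|f(x)-f(y)|/2$. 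Finitely many of these cover $K$; complementing and, after rescaling the functions, choosing a common $\epsilon$, one obtains $V_{f_1,\dots,f_n;\epsilon}\subseteq W$. This is just the fact that on a compact Hausdorff space the weak uniformity induced by $C(X)$ coincides with the unique uniformity, and I would feel free to quote it as standard rather than reprove it in full.

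With the lemma in hand both directions are immediate, and crucially the \emph{same} net works. For ($\Leftarrow$), assume $s_i\to Id_X$ uniformly; fix $f\in C(X)$ and $\epsilon>0$. Since $X$ is compact, $f$ is uniformly continuous, so $V_{f;\epsilon}\in\mathcal{U}$, and eventually $(s_ix,x)\in V_{f;\epsilon}$ for all $x$, i.e. $\sup_x|f(s_ix)-f(x)|<\epsilon$; by the identity this is $\|s_if-f\|_\infty<\epsilon$. For ($\Rightarrow$), assume $\|s_if-f\|_\infty\to 0$ for all $f$; given any entourage $U$, pick $f_1,\dots,f_n$ and $\epsilon>0$ with $V_{f_1,\dots,f_n;\epsilon}\subseteq U$, and note that since there are finitely many $f_k$ we eventually have $\sup_x|f_k(s_ix)-f_k(x)|<\epsilon$ for every $k$, hence $(s_ix,x)\in V_{f_1,\dots,f_n;\epsilon}\subseteq U$ for all $x$. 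The main obstacle is therefore concentrated entirely in the uniformity lemma; once that is granted, the equivalence follows from the norm identity and the automatic uniform continuity of functions on a compact space. In the metrizable case one replaces $\mathcal{U}$ by a compatible metric $d$, the entourages $V_{f_1,\dots,f_n;\epsilon}$ by the balls $\{d<\epsilon\}$, and extracts a sequence, recovering the sequential formulation in the statement.
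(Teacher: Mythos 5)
Your proof is correct, but it takes a different route from the paper's. The paper proves both implications by contradiction: for ($\Rightarrow$) it extracts, via compactness, convergent subnets $x_i\to x$ and $s_i^{-1}x_i\to y$ with $x\neq y$ and contradicts rigidity of $G\curvearrowright C(X)$ with a Urysohn function separating $x$ from $y$; for ($\Leftarrow$) it again passes to subnets and contradicts continuity of a fixed $f$. You instead argue directly, concentrating all the work in the standard fact that the unique uniformity on a compact Hausdorff space is the weak uniformity generated by $C(X)$, after which both directions are formal consequences of the identity $\|sf-f\|_\infty=\sup_y|f(sy)-f(y)|$. Your separation-and-finite-subcover proof of that lemma is essentially the same compactness-plus-Urysohn mechanism the paper deploys inline, so the mathematical content is equivalent; what your version buys is (i) a reusable, quotable lemma in place of two ad hoc contradiction arguments, and (ii) an explicit handling of the symmetry between $s_i$ and $s_i^{-1}$, which the paper uses silently when it writes $(s_i^{-1}x_i,x_i)\notin U$ after assuming only that $s_i\not\to \mathrm{Id}_X$ uniformly. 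Both proofs correctly keep the same net throughout, so the ``nontrivial elements'' clause is preserved. No gaps.
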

\begin{proof}
First, we show that $G\curvearrowright C(X)$ is uniformly rigid implies that $G\curvearrowright X$ is uniformly rigid. Let $\{s_i\}\subset G$ be a net witnessing the uniform rigidity for $G\curvearrowright C(X)$. Assume that $s_i$ does not converge to $\text{Id}_X$ uniformly, then there is a net $x_i\in X$ and a neighborhood $U\supset D$, the diagonal in $X\times X$ such that $(s_i^{-1}x_i,x_i)\not\in U$ for all $i$. Since $X$ is compact, we may assume that $\lim_ix_i=x$ and $\lim_is_i^{-1}x_i=y$. Note that $x\neq y$. Then there exists some $f\in C(X)$ with $0\leq f\leq 1$ and $f(x)=1$ and $f(y)=0$. Hence, as $i\to\infty$, we get the following contradiction:
\begin{align*}
0\leftarrow \left\|s_if-f\right\|=\sup_{x\in X}|f(s_i^{-1}x)-f(x)|\geq \sup_i|f(s_i^{-1}x_i)-f(x_i)|
\rightarrow |f(x)-f(y)|=1.
\end{align*}

To prove the converse, we fix any $\{s_i\}\subset G$ witnessing the uniform rigidity for $G\curvearrowright X$. Fix any $f\in C(X)$. Assume that $\lim_i\|s_if-f\|\neq 0$, then after passing to a subnet, we may assume there is some $\epsilon>0$ and points $x_i\in X$ such that $|f(s_i^{-1}x_i)-f(x_i)|\geq \epsilon$ for all $i$. Moreover, since $X$ is compact, we may assume $\lim_ix_i=x\in X$. Since $s_i^{-1}$ converges to $Id_X$ uniformly, we have $\lim_is_i^{-1}x_i=x$. Then, we get a contradiction by taking the limit on the above inequality using $f\in C(X)$.
\end{proof}
We now give some examples of uniformly rigid actions on noncommutative $C^*$-algebras. In fact, in these examples, the actions are strongly uniformly rigid in the sense that $\lim_i\sup_{\left\|a\right\|=1}\left\|s_ia-a\right\|=0$ for some net of nontrivial elements $s_i\in G$.
\begin{proposition}\label{prop: actions on M_n(C) are ur}
Let $G\curvearrowright M_n(\mathbb{C})$ be any action of an infinite discrete group by $*$-automorphisms. Then, this action is uniformly rigid.
\end{proposition}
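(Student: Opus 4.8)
The plan is to use that $\Aut(M_n(\mathbb{C}))$ is a compact group and run an accumulation argument on the image of $G$. By the Skolem--Noether theorem every $*$-automorphism of $M_n(\mathbb{C})$ has the form $a\mapsto uau^*$ for a unitary $u\in U(n)$, unique up to a scalar in $U(1)$; hence $\Aut(M_n(\mathbb{C}))$ is canonically identified with the projective unitary group $PU(n)=U(n)/U(1)$, a compact metrizable (Lie) group. I would write $\pi\colon G\to PU(n)$ for the homomorphism induced by the action, so that $\sigma_s(a)=u_s a u_s^*$ for any unitary lift $u_s\in U(n)$ of $\pi(s)$, and then split into two cases according to whether $\pi(G)$ is finite or infinite.

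If $\pi(G)$ is finite, then $\ker\pi$ has finite index in the infinite group $G$, hence is infinite; choosing a sequence $\{s_i\}$ of distinct nontrivial elements of $\ker\pi$ gives $\sigma_{s_i}=\mathrm{id}$, so $\|s_i a-a\|=0$ for every $a$ and uniform rigidity holds trivially. If $\pi(G)$ is infinite, I would pick $g_i\in G$ with the $\pi(g_i)$ pairwise distinct; since $PU(n)$ is compact metrizable, after passing to a subsequence we may assume $\pi(g_i)\to p$ for some $p\in PU(n)$. Setting $s_i:=g_i^{-1}g_{i+1}$, continuity of the group operations gives $\pi(s_i)=\pi(g_i)^{-1}\pi(g_{i+1})\to p^{-1}p=e$, while $\pi(g_i)\ne\pi(g_{i+1})$ forces $\pi(s_i)\ne e$, so each $s_i$ is nontrivial.

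To conclude in the infinite case I would lift to $U(n)$: choose unitaries $u_i\in U(n)$ lifting $\pi(s_i)$; by compactness of $U(n)$ pass to a subsequence with $u_i\to v$, and since the quotient map $U(n)\to PU(n)$ is continuous the image of $v$ is $\lim_i\pi(s_i)=e$, i.e.\ $v=\lambda 1$ for some $\lambda\in U(1)$. Replacing $u_i$ by $w_i:=\bar\lambda u_i$ (still a lift of $\pi(s_i)$) we obtain $w_i\to 1$ in $U(n)$, with $\sigma_{s_i}(a)=w_i a w_i^*$. The elementary estimate
\[
\|\sigma_{s_i}(a)-a\|=\|w_i a w_i^*-a\|\le \|w_i-1\|\,\|a\|+\|a\|\,\|w_i^*-1\|=\|a\|\bigl(\|w_i-1\|+\|w_i^*-1\|\bigr)
\]
then tends to $0$, uniformly over $\|a\|\le 1$, establishing (strong) uniform rigidity with witnessing net $\{s_i\}$.

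The only genuinely delicate point is this lifting step: passing from convergence $\pi(s_i)\to e$ in the quotient $PU(n)$ to unitary representatives $w_i\to 1$ in $U(n)$. I would settle it by the compactness-plus-subsequence argument above; equivalently one may invoke that $U(n)\to PU(n)$ is a principal $U(1)$-bundle and use a continuous local section near $e$. Everything else, including the two norm inequalities, is routine, and the argument simultaneously yields the stronger conclusion $\lim_i\sup_{\|a\|=1}\|s_i a-a\|=0$.
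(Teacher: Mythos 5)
Your argument is correct and is essentially the proof in the paper: both rely on inner-ness of automorphisms of $M_n(\mathbb{C})$, compactness of the unitary group, the difference trick $s_i=g_i^{-1}g_{i+1}$ to push the projective image toward the identity, and a scalar correction of the unitary lift before applying the same commutator norm estimate. Your explicit case split on whether $\pi(G)$ is finite and your work in $PU(n)$ rather than with a projective unitary lift are only organizational variations.
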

\begin{proof}
Since every automorphism of $M_n(\mathbb{C})$ is inner, there is some map $\pi: G\rightarrow \mathcal{U}(n)$ such that the action is given by $Ad(\pi_g)$. Note that since $M_n(\mathbb{C})$ has a trivial center, it is not hard to see that $\pi_{st}\equiv \pi_s\pi_t~mod~\mathbb{T}$ and $\pi_{s^{-1}}\equiv \pi_{s^*}$, i.e., $\pi_{st}(\pi_s\pi_t)^*, \pi_{s^*}\pi_{s^{-1}}^*\in\mathbb{T}$ for all $s,t\in G$, where $\mathbb{T}$ is understood as the set of unitary scalar matrices in $\mathcal{U}(n)$. To show the action is strongly uniformly rigid, it suffices to show there is some nontrivial net $\{s_i\}\subset G$ such that $\lim_i\left\|\pi_{s_i}-\lambda I_n\right\|=0$ for some $\lambda\in \mathbb{T}$. Indeed, notice that
\begin{align*}
    \sup_{\left\|a\right\|=1}\left\|s_ia-a\right\|&=\sup_{\left\|a\right\|=1}\left\|\pi_{s_i}a-a\pi_{s_i}\right\|\\&\leq \sup_{\left\|a\right\|=1}\left\|\pi_{s_i}a-\lambda a\right\|+\sup_{\left\|a\right\|=1}\left\|\lambda a-a\pi_{s_i}\right\|\\&\leq 2\left\|\pi_{s_i}-\lambda I_n\right\|.
\end{align*}
Since $\mathcal{U}(n)$ is compact, we can find some $\{t_i\}\subset G$ such that $\lim_i\pi_{t_i}=u\in\mathcal{U}(n)$, then after passing to a subnet, we may assume that $\left\|\pi_{t_i}-u\right\|<\frac{1}{i}$ for all $i$. Then $\left\|\pi_{t_{i+1}}^*\pi_{t_i}-I_n\right\|=\left\|\pi_{t_i}-\pi_{t_{i+1}}\right\|\leq \frac{1}{i}+\frac{1}{i+1}\rightarrow 0$. We may write $\pi_{t_{i+1}^{-1}t_i}=\pi_{t_{i+1}}^*\pi_{t_i}\lambda_i$ for some $\lambda_i\in \mathbb{T}$. By passing to a subnet, we may assume that $\lim_i\lambda_i=\lambda\in\mathbb{T}$. Hence, we deduce that $\lim_i\left\|\pi_{s_i}-\lambda I_n\right\|=0$, where we set $s_i=t_{i+1}^{-1}t_i$.
\end{proof}

In fact, the proof can be generalized to deal with actions on finite-dimensional $C^*$-algebras.

\begin{proposition}\label{prop: actions on finite dimensional algebras are ur}
Let $G\curvearrowright A$ be an action  of an infinite discrete group on a finite-dimensional $C^*$-algebra $A$ by *-automorphisms. Then, this action is uniformly rigid.
\end{proposition}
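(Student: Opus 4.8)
The plan is to reduce to the single-block case already settled in Proposition~\ref{prop: actions on M_n(C) are ur}. Write $A=\bigoplus_{k=1}^m M_{n_k}(\mathbb{C})$ as a finite direct sum of matrix blocks, with minimal central projections $z_1,\dots,z_m$ (the units of the blocks). Any $*$-automorphism of $A$ permutes the $z_k$ and must carry each block onto an \emph{isomorphic} block, hence it preserves the sizes $n_k$. This produces a group homomorphism $\rho\colon G\to \Sym(\{1,\dots,m\})$ recording how each $s\in G$ permutes the blocks. Since the target is finite, $G_0:=\ker\rho$ is a finite-index, hence infinite, subgroup of $G$, and it suffices to produce a rigid net inside $G_0$.

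For $s\in G_0$ the automorphism fixes each $z_k$, so it restricts to a $*$-automorphism of each $M_{n_k}(\mathbb{C})$, which is inner. Choosing implementing unitaries gives, for each $k$, a map $\pi^{(k)}\colon G_0\to\mathcal{U}(n_k)$ with $s\,a=\big(\mathrm{Ad}(\pi^{(k)}_s)a_k\big)_k$ for $a=(a_k)_k\in A$; exactly as in the previous proof each $\pi^{(k)}$ satisfies $\pi^{(k)}_{st}\equiv\pi^{(k)}_s\pi^{(k)}_t$ and $\pi^{(k)}_{s^{-1}}\equiv(\pi^{(k)}_s)^*$ modulo $\mathbb{T}$. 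Now run the compactness argument of Proposition~\ref{prop: actions on M_n(C) are ur} simultaneously over all blocks: the product $\prod_{k=1}^m\mathcal{U}(n_k)$ is compact, so, using that $G_0$ is infinite, I extract a sequence of distinct elements $t_i\in G_0$ with $\big\|\pi^{(k)}_{t_{i+1}}-\pi^{(k)}_{t_i}\big\|\to 0$ for every $k$. Setting $s_i:=t_{i+1}^{-1}t_i$ (nontrivial, since $t_{i+1}\neq t_i$) and using the projectivity relations gives scalars $\lambda_i^{(k)}\in\mathbb{T}$ with $\pi^{(k)}_{s_i}=(\pi^{(k)}_{t_{i+1}})^*\pi^{(k)}_{t_i}\,\lambda_i^{(k)}$, whence $\big\|\pi^{(k)}_{s_i}-\lambda_i^{(k)}I_{n_k}\big\|\to0$.

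Passing to a further subnet so that $\lambda_i^{(k)}\to\lambda_k\in\mathbb{T}$ for each of the finitely many $k$, the single-block estimate $\sup_{\|a_k\|=1}\|\mathrm{Ad}(\pi^{(k)}_{s_i})a_k-a_k\|\le 2\|\pi^{(k)}_{s_i}-\lambda_k I_{n_k}\|$ from Proposition~\ref{prop: actions on M_n(C) are ur} yields $\sup_{\|a\|=1}\|s_i a-a\|\to0$, so $\{s_i\}$ witnesses (strong) uniform rigidity.

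The only genuinely new feature compared with Proposition~\ref{prop: actions on M_n(C) are ur} is that automorphisms may permute the matrix blocks, and this is the step I expect to require the most care: it is handled by passing to the finite-index subgroup $G_0$ on which no block is moved. This reduction is harmless because uniform rigidity only asks for \emph{some} net in $G$, so a net lying in $G_0\subset G$ already qualifies; once inside $G_0$, the proof is merely the simultaneous version, over finitely many blocks, of the diagonal/subnet extraction already carried out for one block.
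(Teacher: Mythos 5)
Your proof is correct and follows essentially the same route as the paper: both arguments pass to the infinite, finite-index subgroup of $G$ that fixes every matrix block (the paper phrases this via the $G$-action on the finite spectrum of $Z(A)$, you via the permutation of the minimal central projections) and then rerun the compactness/projective-unitary argument of Proposition~\ref{prop: actions on M_n(C) are ur}. The only cosmetic difference is that the paper embeds $A$ diagonally into $M_{\sum_j n_jk_j}(\mathbb{C})$ and quotes the single-block proof there, whereas you run the subnet extraction simultaneously in the compact group $\prod_k\mathcal{U}(n_k)$ with one scalar $\lambda^{(k)}_i\in\mathbb{T}$ per block --- which, if anything, handles the nontrivial center of $A$ a bit more transparently.
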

\begin{proof}
By structure theorem for finite-dimensional $C^*$-algebras,
\[A\cong \oplus_{j=1}^mM_{n_j}(\mathbb{C})^{\oplus k_j},\]
where $m\geq 1$, $n_1<n_2<\ldots<n_m$ and $k_j\geq 1$ for all $1\leq j\leq m$. Below, we identify $A$ with $\oplus_{j=1}^mM_{n_j}^{\oplus k_j}(\mathbb{C})$.
Note that by considering central projections in $A$, we know that every *-automorphism of $A$ fixes each $j$-th summand $M_{n_j}(\mathbb{C})^{\oplus k_j}$ for all $1\leq j\leq m$.
Next, write the center $Z(A)=C(X)$ for some finite set $X$ and consider the action $G\curvearrowright X$. Since $X$ is finite, the kernel of this action is infinite and so without loss of generality, we may assume $G$ acts on $X$ trivially. Hence, each copy of $M_{n_j}(\mathbb{C})$ in the decomposition of $A$ is globally fixed.
In other words,  we may assume $G\curvearrowright A$ is given by a map $\pi: G\rightarrow \oplus_{j=1}^m\mathcal{U}(n_j)^{\oplus k_j}$. Note that we may embed $A=\oplus_{j=1}^mM_{n_j}(\mathbb{C})^{\oplus k_j}$ diagonally into $M_{\sum_{j=1}^mn_jk_j}(\mathbb{C})$. Observe that the map $\pi: G\rightarrow \oplus_{j=1}^m\mathcal{U}(n_j)^{\oplus k_j}$ also canonically becomes a map $\pi: G\rightarrow \mathcal{U}(\sum_{j=1}^mn_jk_j)$.
Then by applying the proof of Proposition \ref{prop: actions on M_n(C) are ur}, we would get a net $\{s_j\}\subset G$ witnessing the uniform rigidity for the action $\pi: G\curvearrowright M_{\sum_{j=1}^mn_jk_j}(\mathbb{C})$. This would entail a net witnessing the uniform rigidity for $G\curvearrowright A$.
\end{proof}

Similar to the commutative setting \cite{AGG}, we can introduce the so-called RSP sequence and get the following theorem.
\begin{thm}
Let $\beta: G\curvearrowright \mathcal{A}$ and $\alpha: G\curvearrowright Y$ be two continuous actions, where $\mathcal{A}$ is a $C^*$-algebra and $Y$ is a compact Hausdorff space.
Assume the following conditions hold true:
\begin{itemize}
    \item $\alpha: G\curvearrowright Y$ is a $G$-boundary, i.e., it is a strongly proximal minimal $G$-action;
    \item $\beta: G\curvearrowright \mathcal{A}$ is a uniformly rigid action;
    \item There exists an $(\alpha,\beta)$-RSP sequence $\{s_i\}\subset G$ in the sense that $\beta_{s_i}$ is a rigid sequence witnessing the uniform rigidity for $\beta$ and there exists some point $y^*\in Y$ such that $(s_i)_*\nu$ converges to $\delta_{y^*}$ weakly for all $\nu\in \text{Prob}(Y)$.
\end{itemize}
Then every $G$-invariant intermediate $C^*$-algebra $Q$ with $C(Y)\subset Q\subset C(Y)\otimes_{\text{min}} \mathcal{A}$ splits as a tensor product.
\end{thm}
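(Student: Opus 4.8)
The plan is to work at the level of fibers. Identifying $C(Y)\otimes_{\text{min}}\mathcal{A}$ with $C(Y,\mathcal{A})$ and applying Lemma~\ref{lem:smallparts}, I would write $Q=\int_{y\in Y}Q_y$ with $Q_y=\{f(y):f\in Q\}$, a (closed, unital since $C(Y)\subset Q$) $C^*$-subalgebra of $\mathcal{A}$. The splitting $Q=C(Y)\otimes_{\text{min}}\tilde{\mathcal{A}}$ is then equivalent to the fiber $Q_y$ being independent of $y$, in which case the common fiber $\tilde{\mathcal{A}}$ is automatically $G$-invariant. The structural identity I would record first is the transformation rule $Q_{\alpha_s y}=\beta_s(Q_y)$ for all $s\in G$, $y\in Y$: writing an element of $Q_{\alpha_s y}$ as $(s.f)(\alpha_s y)=\beta_s(f(y))$ with $s.f\in Q$ (by $G$-invariance of $Q$) gives one containment, and the reverse is symmetric.

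Next I would extract the two features of the $(\alpha,\beta)$-RSP sequence $\{s_i\}$. On the $Y$-side, $(s_i)_*\delta_y\to\delta_{y^*}$ yields $\alpha_{s_i}y\to y^*$ for every $y\in Y$. On the $\mathcal{A}$-side, uniform rigidity gives $\|\beta_{s_i}a-a\|\to 0$, and since each $\beta_{s_i}$ is isometric this forces $\|\beta_{s_i^{-1}}a-a\|=\|a-\beta_{s_i}a\|\to 0$ as well, so $\{s_i^{-1}\}$ is rigid too. With these I would prove the forward inclusion $Q_{y^*}\subseteq Q_y$ for every $y$: given $a\in Q_{y^*}$, choose $f\in Q$ with $f(y^*)=a$ and consider $s_i^{-1}.f\in Q$. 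Then
\[
\bigl\|(s_i^{-1}.f)(y)-a\bigr\|=\bigl\|\beta_{s_i^{-1}}(f(\alpha_{s_i}y))-a\bigr\|\le \bigl\|f(\alpha_{s_i}y)-a\bigr\|+\bigl\|\beta_{s_i^{-1}}(a)-a\bigr\|\longrightarrow 0,
\]
using $\alpha_{s_i}y\to y^*$ (so $f(\alpha_{s_i}y)\to f(y^*)=a$) and rigidity of $\{s_i^{-1}\}$. As $(s_i^{-1}.f)(y)\in Q_y$ and $Q_y$ is closed, $a\in Q_y$.

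From the forward inclusion I would upgrade to $G$-invariance of $\tilde{\mathcal{A}}:=Q_{y^*}$: applying $Q_{y^*}\subseteq Q_y$ at $y=\alpha_s y^*$ together with $Q_{\alpha_s y^*}=\beta_s(Q_{y^*})$ gives $Q_{y^*}\subseteq\beta_s(Q_{y^*})$ for all $s$; replacing $s$ by $s^{-1}$ and applying $\beta_s$ gives the reverse containment, so $\beta_s(Q_{y^*})=Q_{y^*}$. The main obstacle is the reverse inclusion $Q_y\subseteq Q_{y^*}$, which the RSP sequence cannot supply directly, as it contracts $Y$ \emph{onto} $y^*$ rather than away from it. Here I would instead invoke minimality of $G\curvearrowright Y$: choose a net $t_j\in G$ with $\alpha_{t_j}y^*\to y$, and for $a\in Q_y$ with $f\in Q$, $f(y)=a$, consider $t_j^{-1}.f\in Q$. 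Since $(t_j^{-1}.f)(y^*)=\beta_{t_j^{-1}}(f(\alpha_{t_j}y^*))$ and $f(\alpha_{t_j}y^*)\to f(y)=a$, isometry of $\beta_{t_j^{-1}}$ gives $\bigl\|(t_j^{-1}.f)(y^*)-\beta_{t_j^{-1}}(a)\bigr\|\to 0$, whence $\mathrm{dist}(\beta_{t_j^{-1}}(a),Q_{y^*})\to 0$. Because $\tilde{\mathcal{A}}=Q_{y^*}$ is $G$-invariant and $\beta_{t_j^{-1}}$ is isometric, $\mathrm{dist}(\beta_{t_j^{-1}}(a),Q_{y^*})=\mathrm{dist}(a,Q_{y^*})$, forcing $\mathrm{dist}(a,Q_{y^*})=0$ and hence $a\in Q_{y^*}$. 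Combining the two inclusions yields $Q_y=\tilde{\mathcal{A}}$ for all $y$, so $Q=\int_{y\in Y}Q_y=C(Y,\tilde{\mathcal{A}})=C(Y)\otimes_{\text{min}}\tilde{\mathcal{A}}$ with $\tilde{\mathcal{A}}$ a $G$-invariant unital $C^*$-subalgebra of $\mathcal{A}$, which is the desired splitting.
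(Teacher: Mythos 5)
Your argument is correct, and it takes a genuinely different route from the paper. The paper reduces the noncommutative statement to its commutative counterpart (Theorem~\ref{thm: spliting using RSP sequence in the commutative times commutative setting}): it shows that uniform rigidity of $G\curvearrowright\mathcal{A}$ passes to the induced action on the state space $S(\mathcal{A})$, applies the commutative splitting theorem to intermediate factors of $Y\times S(\mathcal{A})\to Y$, and then transfers the conclusion back to $Q$ via the bridge result Theorem~\ref{thm:bridgecnc} (whose proof runs through a Hahn--Banach argument to force the fibers $Q_y$ to coincide). You bypass the state-space detour entirely: starting from the fiber decomposition $Q=\int_{y\in Y}Q_y$ of Lemma~\ref{lem:smallparts}, you record the covariance $Q_{\alpha_s y}=\beta_s(Q_y)$, use the RSP sequence (contraction of Dirac masses to $y^*$ on the $Y$-side, isometry plus rigidity of $\{s_i^{-1}\}$ on the $\mathcal{A}$-side) to get $Q_{y^*}\subseteq Q_y$, deduce $G$-invariance of $Q_{y^*}$, and then use only minimality of $G\curvearrowright Y$ together with the distance-preservation $\mathrm{dist}(\beta_{t_j^{-1}}(a),Q_{y^*})=\mathrm{dist}(a,Q_{y^*})$ for the reverse inclusion. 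All the individual steps check out (in particular the triangle-inequality estimate in the forward inclusion and the claim that each $Q_y$ is closed, which follows since it is the image of $Q$ under the evaluation $*$-homomorphism). Your route is shorter and more self-contained, and it makes visible that only the action of the RSP sequence on point masses and minimality are needed; what the paper's factorization buys is a reusable bridge theorem between the noncommutative inclusion and a purely topological intermediate-factor problem on $Y\times S(\mathcal{A})$, which the authors also exploit elsewhere.
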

\begin{proof}
If $\mathcal{A}$ is abelian, then this follows from Theorem \ref{thm: spliting using RSP sequence in the commutative times commutative setting}
 in view of Proposition \ref{prop: ur is weaker than sur}. Fix a general unital $C^*$-algebra $\mathcal{A}$ such that $G\curvearrowright Y$ and $G\curvearrowright\mathcal{A}$ satisfy the three conditions.
We claim that the induced $G$-action on $S(\mathcal{A})$, the state space of $\mathcal{A}$ equipped with the weak *-topology is uniformly rigid. Since $G\curvearrowright \mathcal{A}$ is uniformly rigid, we have rigid sequence $\{s_i\}\subset G$ implementing the rigid action. Let $\varphi\in S(\mathcal{A})$ and $\epsilon>0$ be given. Consider a basic open neighborhood $U_{\{x_1,x_2,\ldots,x_n\}}^{\epsilon}$ containing $\varphi$, where
\[U_{\{a_1,a_2,\ldots,a_n\}}^{\epsilon}=\left\{\psi\in S(\mathcal{A}): |\varphi(a_i)-\psi(a_i)|<\epsilon,~i=1,2,\ldots,n\right\}.\]
We can find $i_0$ such that for all $i\ge i_0$, we have $$\sup_{1\leq j\leq n}\|a_j-s_i^{-1}a_j\|=\sup_{1\leq j\leq n}\|s_ia_j-a_j\|<\epsilon.$$ 
Now, for all $i\ge i_0$, we see that 
\[|s_i.\varphi(a_j)-\varphi(a_j)|=|\varphi(s_i^{-1}a_j)-\varphi(a_j)|\le \|s_i^{-1}a_j-a_j\|<\epsilon,~j=1,2,\ldots,n.\]
Consequently, it follows that $s_i.\varphi\in U_{\{a_1,a_2,\ldots,a_n\}}^{\epsilon}$ for all $i\ge i_0$. Since the choice of $i_0$ is independent of the points in the neighborhood $U_{\{x_1,x_2,\ldots,x_n\}}^{\epsilon}$ or the state $\varphi$, we see that the action $G\curvearrowright S(\mathcal{A})$ is uniformly rigid. Consequently, the actions $G\curvearrowright Y$ and $G\curvearrowright S(\mathcal{A})$ satisfy the abovementioned assumptions. We can now use Theorem~\ref{thm: spliting using RSP sequence in the commutative times commutative setting} to conclude that every $G$-intermediate factor  $Z$ between $Y\times S(\mathcal{A})$ and $Y$ splits as a product $Y\times B$ for some $G$-factor $B$ of $S(\mathcal{A})$. Appealing to Theorem~\ref{thm:bridgecnc}, the proof is finished. 
\end{proof}
\begin{remark} 
Our original proof of the above theorem needs a stronger assumption on $G\curvearrowright \mathcal{A}$. We are very grateful to Hanfeng Li for pointing out that uniform rigidity is enough, which motivates us to isolate Theorem ~\ref{thm:bridgecnc}.
\end{remark}

Next, we present examples that fit into the above theorem. First, note that we have examples of strongly uniformly rigid actions on non-commutative $C^*$-algebras by either Proposition \ref{prop: actions on M_n(C) are ur} or Proposition \ref{prop: actions on finite dimensional algebras are ur}.

Our first example comes from the convergence groups. A group $\Gamma$ is called a convergence group when it admits an action $\Gamma\curvearrowright X$ with the following property. Given any distinct sequence of elements $\{g_n\}$ within $\Gamma$, there are two points $a,b\in X$ along with a subsequence $g_{n_k}$ such that the action of $g_{n_k}$ on $X\setminus\{b\}$ converges uniformly to $a$ on every compact subset of $X\setminus \{b\}$. We refer the readers to \cite{Tukia} for more details and properties of such groups.
\begin{example}
    Let $\Gamma$ be a non-elementary convergence group. Let $\beta: \Gamma\curvearrowright \mathcal{A}$ be a uniformly rigid continuous action and $\alpha: \Gamma\curvearrowright Y$ be the action on the limit set of a non-elementary convergence action of $G$. Then there exists an $(\alpha,\beta)$-RSP sequence $\{s_i\}\subset \Gamma$.
\end{example}
\begin{proof}
    The proof of \cite[Theorem 5.2]{AGG} can be applied here. Indeed, take any rigid sequence in $\Gamma$, say $\{\gamma_n\}$ witnessing the strongly uniform rigidity for $G\curvearrowright \mathcal{A}$. Then, that proof goes through once we know two facts about this sequence.
    
    (1) A conjugate of $\{\gamma_n\}$, say $\{g\gamma_n g^{-1}\}$, where $g\in \Gamma$ is still a rigid sequence witnessing the strongly uniform rigidity for $\Gamma\curvearrowright\mathcal{A}$.
    
    (2) A product of two rigid sequences is still a rigid sequence witnessing the strongly uniform rigidity for $\Gamma\curvearrowright\mathcal{A}$.

These two facts hold since *-automorphisms of $C^*$-algebras are isometries.
\end{proof}
For the same reason as above, it is also clear that the proof of \cite[Theorem 6.3]{AGG} shows the following holds.
\begin{example}
        Let $\Gamma<G=SL_d(\mathbb{R})$, $d\geq 2$, be a lattice. Let $\beta: \Gamma\curvearrowright \mathcal{A}$ be a strongly uniformly rigid action on a $C^*$-algebra $\mathcal{A}$ with a fixed rigid sequence $\{s_n: n\in\mathbb{N}\}$. Then there exists some $0\leq\ell\leq  n-1$ such that if we set  $\alpha: \Gamma\curvearrowright Y$ be the action on $Y=Gr(\ell, \mathbb{R}^d)$, the Grassmann variety of $\ell$-dimensional subspaces of $\mathbb{R}^d$ induced by the projective linear transformations of $\Gamma$ on $\mathbb{P}^{d-1}(\mathbb{R})$, the projective space of all lines in $\mathbb{R}^d$. Then there exists an $(\alpha, \beta)$-RSP sequence $\{s_i\}\subset \Gamma$.
\end{example}
\subsection{More examples in the commutative setup}
We first prove the following general theorem by mimicking the argument of \cite[Theorem 5.2]{AGG}.

\begin{thm}\label{thm: spliting using RSP sequence in the commutative times commutative setting}
Let $G\curvearrowright X$ and $G\curvearrowright Y$ be two continuous actions.
Assume the following conditions hold true:
\begin{itemize}
    \item $\alpha: G\curvearrowright Y$ is a $G$-boundary, i.e., it is a strongly proximal minimal $G$-action;
    \item $\beta: G\curvearrowright X$ is a uniformly rigid action;
    \item There exists an $(\alpha,\beta)$-RSP sequence $\{s_i\}\subset G$.
\end{itemize}
Then every $G$-invariant intermediate $C^*$-algebra $Q$ with $C(Y)\subset Q\subset C(X)\otimes_{\text{min}} C(Y)$ splits as a tensor product.
\end{thm}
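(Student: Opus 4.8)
The plan is to reduce the statement to the combinatorial criterion of Proposition~\ref{prop: equivalence relation formulation on splitting}, after identifying $C(X)\otimes_{\text{min}}C(Y)$ with $C(X\times Y)$. Since $C(Y)\subset Q$, it suffices to verify condition (2) there: fixing $x,x'\in X$ for which there is some $y_0\in Y$ with $q(x,y_0)=q(x',y_0)$ for all $q\in Q$, I must show $q(x,y)=q(x',y)$ for every $y\in Y$ and every $q\in Q$. Throughout I will use that $Q$ is $G$-invariant, so that $s^{-1}\cdot q\in Q$ with $(s^{-1}\cdot q)(a,b)=q(sa,sb)$; feeding the hypothesis through $s^{-1}\cdot q$ gives $q(sx,sy_0)=q(sx',sy_0)$ for every $s\in G$ and every $q\in Q$.

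The key device is the conjugated rigid sequence. By Proposition~\ref{prop: ur is weaker than sur} the uniform rigidity of $\beta$ means $s_i\to \mathrm{Id}_X$ uniformly; since a fixed homeomorphism of the compact space $X$ is uniformly continuous, the conjugated net $g s_i g^{-1}\to \mathrm{Id}_X$ uniformly as well, for each fixed $g\in G$. Hence along $i$ one has $g s_i g^{-1}x\to x$ and $g s_i g^{-1}x'\to x'$, while on the boundary side the RSP property $(s_i)_*\delta_{g^{-1}y_0}\to\delta_{y^*}$ yields $g s_i g^{-1}y_0=g(s_i g^{-1}y_0)\to g y^*$. Thus conjugation lets the group element $g$ steer the $Y$-coordinate of $y_0$ toward the orbit point $g y^*$, while the rigidity simultaneously pins the $X$-coordinates of $x$ and $x'$ in place.

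With this in hand I would argue by an iterated limit. Fix an arbitrary target $y\in Y$; by minimality of $G\curvearrowright Y$ the orbit of $y^*$ is dense, so I may choose a net $g_k\in G$ with $g_k y^*\to y$. For each fixed $k$, applying the $G$-invariance identity above with $s=g_k s_i g_k^{-1}$ gives $q(g_k s_i g_k^{-1}x,\,g_k s_i g_k^{-1}y_0)=q(g_k s_i g_k^{-1}x',\,g_k s_i g_k^{-1}y_0)$; letting $i\to\infty$ and using continuity of $q$ together with the three convergences above produces $q(x,g_k y^*)=q(x',g_k y^*)$. Now letting $k\to\infty$ and again invoking continuity gives $q(x,y)=q(x',y)$. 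As $y$ and $q$ were arbitrary, condition (2) holds, and Proposition~\ref{prop: equivalence relation formulation on splitting} yields $Q=C(Z)\otimes_{\text{min}}C(Y)$ for a $G$-factor $Z$ of $X$.

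The main obstacle is rigidity bookkeeping rather than any deep idea: one must check that conjugation by a fixed homeomorphism preserves uniform convergence to the identity (which uses uniform continuity with respect to the unique uniformity on the compact Hausdorff space $X$, so that the argument survives the non-metrizable case where $\{s_i\}$ is only a net), and one must be careful that the two limits are taken successively, first $i\to\infty$ for each fixed $k$ and then $k\to\infty$, so that no genuine diagonal extraction across both indices is required. Everything else is a direct transcription of the strongly-proximal-versus-rigid mechanism of \cite[Theorem~5.2]{AGG}.
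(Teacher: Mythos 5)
Your proposal is correct and follows essentially the same route as the paper: reduce to condition (2) of Proposition~\ref{prop: equivalence relation formulation on splitting}, conjugate the RSP sequence by group elements to steer $y_0$ toward points of the dense orbit of $y^*$ while rigidity keeps $x,x'$ fixed, and then pass to the limit twice. The paper phrases the second limit via shrinking neighborhoods and the syndetic set $N(a,U)$ rather than a net $g_k$ with $g_ky^*\to y$, but this is only a cosmetic difference.
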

\begin{proof}
Let $x, x'\in X$, $y_0\in Y$ be any points. Assume that $q(x, y_0)=q(x', y_0)$ for all $q\in Q$. By Proposition \ref{prop: equivalence relation formulation on splitting}, to finish the proof,
we just need to show that $q(x, y)=q(x', y)$ for all $y\in Y$.

Fix any $q\in Q$ and $y\in Y$.
Let $\{s_i\}\subset G$ be any uniformly rigid sequence on $X$. By assumption, there exists some $a\in Y$ such that $\lim_is_iz=a$ for all $z\in Y$. 

Denote by $N(a, U)=\{s\in G:~ sa\in U\}$. Note that the action $G\curvearrowright Y$ is minimal is equivalent to say $N(a, U)$ is a syndetic set in $G$ for any $y\in Y$ and non-empty open set $U\subset Y$, i.e., there exists some finite subset $F\subset G$ with $G=FN(a, U)=\{st: ~s\in F, ~ t\in N(a, U)\}$. In particular, $N(a, U)$ is non-empty.

Given any open neighborhood $U\ni y$, take any $g\in N(a, U)$. Then set $t_i=gs_ig^{-1}$. Note that as a conjugation of $\{s_i\}$, $\{t_i\}$ is still a uniformly rigid sequence on $X$.

Notice that $\lim\limits_it_iy_0=ga$. Then we have
\begin{align*}
    q(x, ga)&=\lim_iq(t_ix, t_iy_0)=\lim_i(t_i^{-1}q)(x, y_0)\\
    &=\lim_i(t_i^{-1}q)(x', y_0)=\lim_iq(t_ix', t_iy_0)=q(x', ga).
\end{align*}
Now, by shrinking $U$, we get a sequence $\{g_Ua: ~U\ni y\}$ with $g_Ua\rightarrow y$ and $q(x, g_Ua)=q(x', g_Ua)$ from the above proof. Hence, after taking a limit along this sequence, we get that $q(x, y)=q(x', y)$.
\end{proof}

Below we present two classes of examples such that Theorem \ref{thm: spliting using RSP sequence in the commutative times commutative setting} could be applied. 

\begin{example}[Non-elementary Convergence groups]
Let $G$ be a non-elementary convergence group. Let $G\curvearrowright X$ be a uniformly rigid continuous action and $G\curvearrowright Y$ be the limit set for a non-elementary convergence action of $G$. Note that the action $G\curvearrowright Y$ is topologically free since every $e \ne \gamma \in \Gamma$  has at most two fixed points.

Let $1 \ne \gamma_n \in G$ be any uniformly rigid sequence on $X$. Since $G\curvearrowright Y$ is a convergence action, there is a subsequence $\{n_i\} \subset \Z$ and $a,r \in Y$ such that $\lim_{i \rightarrow \infty}\gamma_{n_i} z = a$ for every $z \in Y \setminus \{r\}$ and the convergence is uniform on compact subsets of $Y \setminus \{r\}$. Upon passing to a subsequence (which by abuse of notation is still denoted by $\gamma_{n_i}$), we can assume in addition that $\lim_{i \rightarrow \infty} \gamma_{n_i} r = v$ for some $v \in Y$. By the minimality of the action, there is an element $g \in G$ such that $g^{-1}r \not \in \{a,v\}$. By arguing similarly as in \cite[Theorem~5.2]{AGG}, it follows that sequence $\{s_i = \gamma_{n_i} g \gamma_{n_i} g^{-1}\}$ $(\alpha,\beta)$-RSP sequence for $G$. Consequently, all the conditions of Theorem~\ref{thm: spliting using RSP sequence in the commutative times commutative setting} are met. Therefore, every  $G$-invariant intermediate $C^*$-algebra $Q$ with $$C(Y)\subset Q\subset C(X)\otimes_{\text{min}} C(Y)$$ splits as a tensor product.
\end{example}

We now give an example of a group that does not belong to the convergence groups.
\begin{example}[Higher rank lattices]
Let $\Gamma<G=SL_d(\mathbb{R})$, $d\geq 2$, be a lattice with trivial center. Let $\Gamma\curvearrowright X$ be a uniformly rigid action with a fixed, rigid sequence $\{s_n: n\in\mathbb{N}\}$. Then there exists some $0\leq\ell\leq  n-1$ such that if we set  $\Gamma\curvearrowright Y$ be the action on $Y=Gr(\ell, \mathbb{R}^d)$, the Grassmann variety of $\ell$-dimensional subspaces of $\mathbb{R}^d$ induced by the projective linear transformations of $\Gamma$ on $\mathbb{P}^{d-1}(\mathbb{R})$, the projective space of all lines in $\mathbb{R}^d$. It follows from the proof of \cite[Theorem~6.3]{AGG} that the action $\Gamma\curvearrowright Y$ is a strongly proximal minimal action. Moreover, arguing similarly as in the proof of \cite[Theorem~6.3]{AGG}, we see that there is an $(\alpha,\beta)$-RSP sequence for $\Gamma$. Therefore, using Theorem~\ref{thm: spliting using RSP sequence in the commutative times commutative setting}, we see that every $G$-invariant intermediate $C^*$-subalgebra $Q$ between $C(Y)$ and $C(X)\otimes_{\text{min}} C(Y)$ splits as a tensor product. 
\end{example}

\begin{bibdiv}
\begin{biblist}
\bib{amrutam2021}{article}{
  title={On intermediate subalgebras of C*-simple group actions},
  author={Amrutam, T.},
  journal={Int. Math. Res. Not. IMRN},
  number={21},
  pages={16193--16204},
  date={2021},
}

\bib{AGG}{article}{
   author={Amrutam, T.},
   author={Glasner, E.},
   author={Glasner, Y.},
   title={Crossed products of dynamical systems; rigidity vs. strong proximality},
status={arXiv preprint, arXiv: 2404.09803},
   date={2024},
}
\bib{arhangel2011study}{article}{
  title={A study of extremally disconnected topological spaces},
  author={Arhangel’skii, A.},
  journal={Bull. Math. Sci.},
  volume={1},
  number={1},
  pages={3--12},
  date={2011},
}

\bib{bader2006factor}{article}{
   author={Bader, U.},
   author={Shalom, Y.},
   title={Factor and normal subgroup theorems for lattices in products of
   groups},
   journal={Invent. Math.},
   volume={163},
   date={2006},
   number={2},
   pages={415--454},}

\bib{BKKO}{article}{
   author={Breuillard, E.},
   author={Kalantar, M.},
   author={Kennedy, M.},
   author={Ozawa, N.},
   title={$C^*$-simplicity and the unique trace property for discrete
   groups},
   journal={Publ. Math. Inst. Hautes \'{E}tudes Sci.},
   volume={126},
   date={2017},
   pages={35--71},}

\bib{BroOza08}{book}{
   author={Brown, N. P.},
   author={Ozawa, N.},
   title={$C^*$-algebras and finite-dimensional approximations},
   series={Graduate Studies in Mathematics},
   volume={88},
   publisher={American Mathematical Society, Providence, RI},
   date={2008},
   pages={xvi+509},}

   \bib{cuntz1977structure}{article}{
   author={Cuntz, J.},
   title={The structure of multiplication and addition in simple
   $C\sp*$-algebras},
   journal={Math. Scand.},
   volume={40},
   date={1977},
   number={2},
   pages={215--233},}

\bib{furman_kernel}{article}{
   author={Furman, A.},
   title={On minimal strongly proximal actions of locally compact groups},
   journal={Israel J. Math.},
   volume={136},
   date={2003},
   pages={173--187},}

\bib{ge1996tensor}{article}{
   author={Ge, L.},
   author={Kadison, R.},
   title={On tensor products for von Neumann algebras},
   journal={Invent. Math.},
   volume={123},
   date={1996},
   number={3},
   pages={453--466},}

\bib{Prox}{book}{
AUTHOR = {Glasner, S.},
     TITLE = {Proximal flows},
    SERIES = {Lecture Notes in Mathematics, Vol. 517},
 PUBLISHER = {Springer-Verlag, Berlin-New York},
      YEAR = {1976},
    }

\bib{glasner2023intermediate}{article}{
author={Glasner, E.},
author={Weiss, B.},
title={On intermediate factors of a product of disjoint systems},
status={arXiv preprint, arXiv: 2312.03329},
date={2023},
}

\bib{glasner1989rigidity}{article}{
   author={Glasner, S.},
   author={Maon, D.},
   title={Rigidity in topological dynamics},
   journal={Ergodic Theory Dynam. Systems},
   volume={9},
   date={1989},
   number={2},
   pages={309--320},}

\bib{jiang2023continuous}{article}{
   author={Jiang, Y.},
   title={On continuous orbit equivalence rigidity for virtually cyclic
   group actions},
   journal={Groups Geom. Dyn.},
   volume={17},
   date={2023},
   number={2},
   pages={555--576},}

\bib{KR1}{book}{
   author={Kadison, R. V.},
   author={Ringrose, J. R.},
   title={Fundamentals of the theory of operator algebras. Vol. I},
   series={Graduate Studies in Mathematics},
   volume={15},
   note={Elementary theory;
   Reprint of the 1983 original},
   publisher={American Mathematical Society, Providence, RI},
   date={1997},
   pages={xvi+398},
}

\bib{KR2}{book}{
   author={Kadison, R. V.},
   author={Ringrose, J. R.},
   title={Fundamentals of the theory of operator algebras. Vol. II},
   series={Graduate Studies in Mathematics},
   volume={16},
   note={Advanced theory;
   Corrected reprint of the 1986 original},
   publisher={American Mathematical Society, Providence, RI},
   date={1997},
   pages={i--xxii and 399--1074},
}

\bib{kalantar_kennedy_boundaries}{article}{
   author={Kalantar, M.},
   author={Kennedy, M.},
   title={Boundaries of reduced $C^*$-algebras of discrete groups},
   journal={J. Reine Angew. Math.},
   volume={727},
   date={2017},
   pages={247--267},}

\bib{KLbook}{book}{
   author={Kerr, D.},
   author={Li, H.},
   title={Ergodic theory~Independence and dichotomies},
   series={Springer Monographs in Mathematics},
   publisher={Springer, Cham},
   date={2016},
   pages={xxxiv+431},}

\bib{AO}{article}{
   author={Minasyan, A.},
   author={Osin, D.},
   title={Acylindrical hyperbolicity of groups acting on trees},
   journal={Math. Ann.},
   volume={362},
   date={2015},
   number={3-4},
   pages={1055--1105},}

\bib{murphy_book}{book}{
   author={Murphy, G. J.},
   title={$C^*$-algebras and operator theory},
   publisher={Academic Press, Inc., Boston, MA},
   date={1990},
   pages={x+286},}

\bib{naimark2012normed}{book}{
   author={Na\u{\i}mark, M. A.},
   title={Normed algebras},
   series={Wolters-Noordhoff Series of Monographs and Textbooks on Pure and
   Applied Mathematics},
   edition={3},
   note={Translated from the second Russian edition by Leo F. Boron},
   publisher={Wolters-Noordhoff Publishing, Groningen},
   date={1972},
   pages={xvi+598},}

\bib{OS}{article}{
   author={Osin, D.},
   title={Acylindrically hyperbolic groups},
   journal={Trans. Amer. Math. Soc.},
   volume={368},
   date={2016},
   number={2},
   pages={851--888},}

\bib{Rordambook}{book}{
   author={R\o rdam, M.},
   author={Larsen, F.},
   author={Laustsen, N.},
   title={An introduction to $K$-theory for $C^*$-algebras},
   series={London Mathematical Society Student Texts},
   volume={49},
   publisher={Cambridge University Press, Cambridge},
   date={2000},
   pages={xii+242},}

   \bib{struatilua1999commutation}{article}{
   author={Str\u{a}til\u{a}, \c{S}.},
   author={Zsid\'{o}, L.},
   title={The commutation theorem for tensor products over von Neumann
   algebras},
   journal={J. Funct. Anal.},
   volume={165},
   date={1999},
   number={2},
   pages={293--346},}

   \bib{suzuki2017group}{article}{
   author={Suzuki, Y.},
   title={Group $\rm C^*$-algebras as decreasing intersection of nuclear
   $\rm C^*$-algebras},
   journal={Amer. J. Math.},
   volume={139},
   date={2017},
   number={3},
   pages={681--705},}

   \bib{Tukia}{article}{
   author={Tukia, P.},
   title={Convergence groups and Gromov's metric hyperbolic spaces},
   journal={New Zealand J. Math.},
   volume={23},
   date={1994},
   number={2},
   pages={157--187},}

   \bib{ursu2019relative}{article}{
   author={Ursu, D.},
   title={Relative $C^*$-simplicity and characterizations for normal
   subgroups},
   journal={J. Operator Theory},
   volume={87},
   date={2022},
   number={2},
   pages={471--486},}

\bib{Wassermann}{article}{
   author={Wassermann, S.},
   title={A pathology in the ideal space of $L(H)\otimes L(H)$},
   journal={Indiana Univ. Math. J.},
   volume={27},
   date={1978},
   number={6},
   pages={1011--1020},}

\bib{zacharias2001splitting}{article}{
   author={Zacharias, J.},
   title={Splitting for subalgebras of tensor products},
   journal={Proc. Amer. Math. Soc.},
   volume={129},
   date={2001},
   number={2},
   pages={407--413},}

\bib{zsido2000criterion}{article}{
   author={Zsid\'{o}, L.},
   title={A criterion for splitting $C^*$-algebras in tensor products},
   journal={Proc. Amer. Math. Soc.},
   volume={128},
   date={2000},
   number={7},
   pages={2001--2006},}

\end{biblist}
\end{bibdiv}

\end{document}